\newtheorem{Theorem}{Theorem}[section]
\newtheorem{Lemma}[Theorem]{Lemma}
\newtheorem{Definition}[Theorem]{Definition}
\newtheorem{Corollary}[Theorem]{Corollary}
\newtheorem{Proposition}[Theorem]{Proposition}
\newtheorem{Example}[Theorem]{Example}
\newtheorem{Remark}[Theorem]{Remark}
\title {On maximal green sequences in abelian length categories}
\author{Fang Li $\;\;\;\;\;\;$ Siyang Liu $\;\;\;\;\;\;$}
\address{Fang Li
\newline Department of Mathematics, Zhejiang University (Yuquan Campus), Hangzhou, Zhejiang
310027,  P.R.China}
\email{fangli@zju.edu.cn}
\address{Siyang Liu
\newline Department
of Mathematics, Zhejiang University (Yuquan Campus), Hangzhou, Zhejiang
310027, P.R.China}
\email{siyangliu@zju.edu.cn}
\date{version of \today}
\newcommand{\lra}{\longrightarrow}
\newcommand{\ra}{\rightarrow}
\newcommand{\sdp}{\times\kern-.2em\vrule height1.1ex depth-.05ex}
\newcommand{\epi}{\lra \kern-.8em\ra}
\begin{document}
\renewcommand{\thefootnote}{\alph{footnote}}
\setcounter{footnote}{-1} \footnote{\emph{Mathematics Subject
Classification(2010)}:~ 18E10, 16W70, 13F60  }

\renewcommand{\thefootnote}{\alph{footnote}}
\maketitle
\bigskip
\begin{abstract}
In this article, we study the relationship among  maximal
 green sequences, complete forward hom-orthogonal sequences and stability functions in abelian length categories.  Mainly, we firstly give a one-to-one correspondence between maximal  green sequences and  complete forward hom-orthogonal sequences via mutual constructions, and then prove that a maximal green sequence can be induced by a
 central charge if and only if it satisfies crossing inequalities.  As applications, we show that  crossing inequalities can be computed by $c$-vectors for finite dimensional algebras;  finally, we give the Rotation Lemma for finite dimensional Jacobian algebras.
\end{abstract}

\section{Introduction and preliminaries }
The maximal green sequence was originally defined to be a particular sequence of mutations of framed cluster quivers, which
 was firstly introduced by Keller in \cite{Kel}. Maximal green sequences are not only an important subject in cluster
 algebras, but also have important applications in many other objects, such as counting BPS states
 in string theory, producing quantum dilogarithm identities and computing refined Donaldson-Thomas
 invariants.

Cluster algebras have closed relations with representation theory via categorification, it follows that maximal
 green sequence could be interpreted from the viewpoint of tilting theory and silting theory. For example,
 a maximal green sequence for a cluster quiver corresponds to a sequence of forward mutation of a specific
 heart to its shift in a particular triangulated category. We refer to \cite{BDP} for more details. Inspired
 by $\tau$-tilting theory, Br$\mathrm{\ddot{u}}$stle, Smith and Treffinger defined maximal green sequence as
 particular finite chain of torsion classes for a finite dimensional algebra in \cite{BST0}, which can be
 also naturally defined in arbitrary abelian categories in \cite{BST1}.

Throughout this paper we always assume $\mathcal{A}$ is  a small abelian category.

Let us firstly recall some basic concepts.  Suppose $X$ is an object in $\mathcal{A}$.
 We say that $X$ has finite length, if there exists a finite filtration
 \[0=X_0 \subset  X_1 \subset X_2 \subset \dots \subset X_m =X\]
 such that $X_i/X_{i-1}$ is simple for all $i$. Such a filtration is called a {\bf{\em Jordan-H$\ddot{o}$lder}
 series} of $X$. It is well-known that if $X$ has finite length, then the length of the
 {\em Jordan-H$\ddot{o}$lder} series of $X$ is uniquely determined by $X$, which will be denoted
 by $l(X)$. Recall that an {\bf abelian length category} is an abelian category such that every object has
 finite length. Throughout this article, we always assume that $\mathcal{A}$ is an abelian length category.

Let $\mathcal{A}$
 be an abelian length category, and $\mathcal{T}$ and $\mathcal{F}$ be full subcategories of $\mathcal{A}$
 which are closed under isomorphisms. The pair $(\mathcal{T}, \mathcal{F})$ is called a {\bf torsion pair} if it satisfies
 the following conditions.
 \begin{enumerate}
     \item[(i)] For any objects $X\in \mathcal{T}$ and $Y\in \mathcal{F}$, then $Hom(X, Y) = 0$,
     \item[(ii)] An obeject $X$ belongs to $\mathcal{T}$  if and only if $Hom(X, Y)=0$ for any object $Y\in \mathcal{F}$,
     \item[(iii)] An obeject $Y$ belongs to $\mathcal{F}$  if and only if $Hom(X, Y)=0$ for any object $X\in \mathcal{T}$.
   \end{enumerate}

  For a torsion pair
 $(\mathcal{T}, \mathcal{F})$, the full subcategories $\mathcal{T}$ and $\mathcal{F}$ are called a  {\bf torsion class}
 and a {\bf torsion-free class}, respectively. It is well-known that a full subcategory in $\mathcal{A}$ is a torsion class
 if and only if it is closed under extensions and factors, and a full subcategory in $\mathcal{A}$ is a torsion-free
 class if and only if it is closed under extensions and subobjects. One of important properties of a torsion pair is that for any
 object $X$ in $\mathcal{A}$,  there is a unique exact sequence $0 \rightarrow X_1 \rightarrow X
 \rightarrow X_2 \rightarrow 0$ with $X_1\in \mathcal{T}$ and $X_2\in \mathcal{F}$ up to isomorphism,
 which is called the {\em canonical sequence} for $X$ with respect to the torsion pair $(\mathcal{T}, \mathcal{F})$.

Let $\mathcal{T}$ and $\mathcal{T}'$ be two torsion classes in $\mathcal{A}$. We say the torsion
 class $\mathcal{T}'$ {\em covers} $\mathcal{T}$ if $\mathcal{T} \subsetneq \mathcal{T}'$ and $\mathcal{X} = \mathcal{T}$ or $\mathcal{X} = \mathcal{T}'$ for any  torsion class $\mathcal{X}$ satisfying $\mathcal{T} \subset \mathcal{X} \subset \mathcal{T}'$.
 In this case, we write $\mathcal{T} \lessdot \mathcal{T}'$.

\begin{Definition}[\cite{BST1}] A {\bf maximal green sequence} in an abelian length category $\mathcal{A}$ is a finite
 sequence of torsion classes with covering relations
 \[\,\mathcal{T}_0\,\lessdot\, \mathcal{T}_1\, \lessdot\, \mathcal{T}_2\, \lessdot \,\dots\,
                             \lessdot\, \mathcal{T}_m\]
 such that $\mathcal{T}_0 = 0$ and $\mathcal{T}_m = \mathcal{A}$.
\end{Definition}

Stability conditions and Harder-Narasimhan filtration are widely studied by many authors
 and are very active. They were introduced in different contexts. For examples, King introduced stability
 functions on quiver representations in \cite{King}, and Rudakov extended it to abelian categories in \cite{Rud}.
 Let us recall basic definitions on stability functions and the important Harder-Narasimhan property for abelian
 length categories from \cite{Rud}.

\begin{Definition}[\cite{Rud, BST1}]
Let $\mathcal{P}$ be a totally ordered set and $\phi : \mathcal{A}^*\rightarrow \mathcal{P}$ a
 function on $\mathcal{A}^*=\mathcal{A}\backslash \{0\}$ which is constant on isomorphism classes.
 The map $\phi$ is called a {\bf stability function} if for each short exact sequence
                  $0\rightarrow L\rightarrow M\rightarrow N\rightarrow 0$
 of nonzero objects in $\mathcal{A}$ one has the so-called {\bf see-saw property}:

 either $\phi(L) = \phi(M) = \phi(N)$,

  or  $\phi(L) > \phi(M) > \phi(N)$,

   or $\phi(L) < \phi(M) < \phi(N)$.

Moreover, a nonzero object
 $M$ in  is said to be {\bf $\phi$-stable} (or {\bf $\phi$-semistable}) if every nontrivial subobject
 $L \subset M$ satisfies $\phi(L) < \phi(M)$ (or $\phi(L) \leq \phi(M)$, respectively).
\end{Definition}

Let $\phi$ be a stability function on $\mathcal{A}$. For any nonzero object $X$ in $\mathcal{A}$, we call $\phi(X)$ the {\bf phase} of $X$.
 When there is no confusion, we will simply call an object {\bf semistable} (respectively, {\bf stable})
 instead of $\phi$-semistable (respectively, $\phi$-stable). Rudakov proved the Harder-Narasimhan property as follows.

\begin{Theorem}[\cite{Rud}]\label{ruda}
Let $\phi : \mathcal{A}\rightarrow \mathcal{P}$ be a stability function, and let $X$ be a
 nonzero object in $\mathcal{A}$. Then up to isomorphism, $X$ admits a unique Harder-Narasimhan filtration,
  that is a filtration
 \[0=X_0\subsetneq X_1\subsetneq X_2 \subsetneq \dots \subsetneq X_l =X\]
 such that the quotients $F_i = X_i/X_{i-1}$ are semistable,
 and $\phi(F_1) > \phi(F_2) > \dots > \phi(F_l)$.

On the other hand, if $Y$ is a semistable object in $\mathcal{A}$, then there exists a filtration of $Y$
  \[0=Y_0\subsetneq Y_1\subsetneq Y_2 \subsetneq \dots \subsetneq Y_m =Y\]
such that the quotients $G_i = Y_i/Y_{i-1}$ are stable, and $\phi(Y) = \phi(G_m) = \dots =\phi(G_1)$.
\end{Theorem}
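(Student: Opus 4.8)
The plan is to argue everything by induction on the length $l(X)$, exploiting the see-saw property purely order-theoretically, since $\mathcal{P}$ carries no arithmetic. The workhorse will be a Schur-type vanishing lemma: if $A$ is semistable with $\phi(A)=p$, $B$ is semistable with $\phi(B)=q$, and $p>q$, then $\mathrm{Hom}(A,B)=0$. To see this I would take a nonzero $f\colon A\to B$ with image $I$; writing $0\to \ker f\to A\to I\to 0$ and using that semistability of $A$ forces every nonzero quotient of $A$ to have phase $\ge p$ (the contrapositive of the see-saw applied to this sequence), I get $\phi(I)\ge p$, while $I\hookrightarrow B$ and semistability of $B$ give $\phi(I)\le q<p$, a contradiction.

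For the existence of the Harder--Narasimhan filtration, the heart of the matter is to produce a \emph{maximal destabilizing subobject} $X_1\subseteq X$: a semistable subobject with $\phi(X_1)\ge \phi(B)$ for every nonzero subobject $B\subseteq X$, chosen largest among subobjects realizing this maximal phase. The key structural input is that semistable subobjects of a fixed phase $p$ are closed under sums: given such $A,B$, I would feed $0\to A\cap B\to A\oplus B\to A+B\to 0$ into the see-saw, together with $\phi(A\oplus B)=p$ (which itself follows from the see-saw on $0\to A\to A\oplus B\to B\to 0$), and deduce that $A+B$ is again semistable of phase $p$. Because $\mathcal{A}$ is a length category it is Noetherian, so the ascending family of such subobjects stabilizes and yields a unique largest $X_1$. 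Once $X_1$ is in hand, its maximality forces $\phi(Z/X_1)<\phi(X_1)$ for every $X_1\subsetneq Z\subseteq X$; applying the inductive hypothesis to the shorter object $X/X_1$ and prepending $X_1$ produces the filtration, and this last inequality is exactly what guarantees $\phi(F_1)>\phi(F_2)$. I expect this step---extracting $X_1$ from the order data alone, via the closure-under-sums lemma and the passage to a largest subobject---to be the main obstacle, since without numerical slopes one cannot simply maximize a real-valued function.

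Uniqueness I would obtain from the vanishing lemma. In any Harder--Narasimhan filtration the top factor is automatically the maximal destabilizing subobject: for a nonzero subobject $B\subseteq X$, considering its successive images in the quotients $F_i$ and invoking the Schur lemma forces $\phi(B)$ not to exceed the top phase $\phi(F_1)$. Hence the first term $X_1$ is pinned down by the characterization established in the existence step, so two filtrations share the same $X_1$; passing to $X/X_1$ and inducting then identifies them completely.

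For the final assertion, let $Y$ be semistable of phase $p$ and induct on $l(Y)$. If $Y$ is stable there is nothing to do; otherwise I would choose a nonzero subobject $Y_1\subseteq Y$ with $\phi(Y_1)=p$ of \emph{minimal} length. Minimality makes $Y_1$ stable: any proper nonzero $Z\subsetneq Y_1$ satisfies $\phi(Z)\le p$ by semistability of $Y$, and $\phi(Z)=p$ would contradict minimality, so $\phi(Z)<p=\phi(Y_1)$. A short see-saw computation on $0\to Y_1\to Y\to Y/Y_1\to 0$ and on the subobjects of $Y/Y_1$ shows that $Y/Y_1$ is again semistable of phase $p$; the inductive hypothesis applied to it, with $Y_1$ prepended, produces the desired filtration with stable quotients all of phase $p$.
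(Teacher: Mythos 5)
First, note that the paper does not prove this theorem at all: it is quoted from Rudakov \cite{Rud}, so the relevant comparison is with Rudakov's argument. Your Schur-type vanishing lemma, your uniqueness argument via successive images into the HN quotients, and your refinement of a semistable object into stable quotients of the same phase (minimal-length subobject of phase $p$, then see-saw on $0\to Y_1\to Y\to Y/Y_1\to 0$) are all correct and essentially the standard route. The genuine gap is exactly where you anticipated it, and your proposed repair does not close it. Your construction of $X_1$ presupposes that the set $\{\phi(B): 0\neq B\subseteq X\}$ \emph{attains a maximum} $p$; since $\mathcal{P}$ is an abstract totally ordered set and $\phi$ is only constant on isomorphism classes (not a function of classes in $K_0(\mathcal{A})$), a finite-length object can a priori have subobjects realizing infinitely many phases with no maximum, and Noetherianity of the subobject lattice does not rule this out, because the destabilizing subobjects need not form an ascending chain. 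Moreover, the closure-under-sums lemma you lean on is false as you argue it: the see-saw applied to $0\to A\cap B\to A\oplus B\to A+B\to 0$ with $\phi(A\oplus B)=p$ yields $\phi(A+B)\geq p$, with equality only when $\phi(A\cap B)=p$ (or $A\cap B=0$); two degree-zero line subbundles of $\mathcal{O}(1)$ on $\mathbb{P}^1$ spanning it show the phase of a sum of equal-phase semistables can strictly jump. The lemma becomes true precisely \emph{after} one knows $p$ is the maximal subobject phase (then $\phi(A+B)\leq p$ is automatic, and so is semistability of any phase-$p$ subobject), so it cannot be the tool that delivers the maximum --- that use is circular.

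Rudakov's proof avoids phase-maximization entirely, and this is the idea missing from your proposal. Using only the ascending chain condition he chooses $A_1\subseteq X$ maximal \emph{with respect to inclusion} in the family $\{B\subseteq X:\phi(B)>\phi(X)\}$; inclusion-maximality gives the order-theoretic property that every $B$ with $A_1\subsetneq B\subseteq X$ satisfies $\phi(B)<\phi(A_1)$. If $A_1$ is not semistable he repeats inside $A_1$, and a propagation lemma (a see-saw computation on $B\cap A$ and $(A+B)/A$) shows the property relative to $A_1$ upgrades to the same property relative to $X$; the descending chain condition, available since $\mathcal{A}$ is a length category, forces the strictly decreasing chain $X\supsetneq A_1\supsetneq A_2\supsetneq\cdots$ with strictly increasing phases to terminate at a semistable $A_k$. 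Only \emph{a posteriori} does one prove that such an $A_k$ is phase-maximal among all subobjects (supposing $\phi(C)>\phi(A_k)$, compare $A_k+C$ and $A_k\cap C$ via the see-saw to reach a contradiction). With this $X_1:=A_k$ in hand, the rest of your outline --- induction on $X/X_1$, uniqueness via the vanishing lemma, and the stable refinement --- goes through as you wrote it.
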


The second part of Theorem \ref{ruda} claims that any semistable object admits a stable subobject and a
 stable quotient with the same phase as the semistable object.
 Following from \cite{BST1}, we call $F_1 = X_1$ the {\bf maximally destabilizing subobject} of $X$ and
 $F_l = X_l/X_{l-1}$ the {\bf maximally destabilizing quotient} of $X$. They are unique up to isomorphism.

For a stability function $\phi : \mathcal{A}\rightarrow \mathcal{P}$, T. Br$\mathrm{\ddot{u}}$stle, D. Smith, and H. Treffinger proved in \cite{BST1} that  it can induce a torsion pair $(\mathcal{T}_p, \mathcal{F}_p)$ in $\mathcal{A}$ for every $p\in \mathcal{P}$ which
 is given as follows.
 \[ \mathcal{T}_{\geq p}\, =\, \{X\,\in\,Obj(\mathcal{A}): \phi(X')\,\geq\, p\,\, \text{for the maximally destabilizing
                          quotient $X'$ of $X$}\} \cup \{0\},\]
 \[ \mathcal{F}_{<p}\, =\, \{Y\,\in\,Obj(\mathcal{A}): \phi(Y'')\,<\, p\,\, \text{for the maximally destabilizing
                          sub-object $Y''$ of $Y$}\} \cup \{0\}.\]
 Then
  $\{\mathcal{T}_{\geq r}\}_{r\in \mathcal{P}}$ is called
 the {\bf chain of torsion classes induced the stability function}
 $\phi$. Furthermore, if $\{\mathcal{T}_{\geq r}\}_{r\in \mathcal{P}}$ forms a maximal green sequence, it called the
  {\bf maximal green sequence induced by the stability function}  $\phi$.

 Note that for any $r, s \in \mathcal{P}$, we have that $\mathcal{T}_{\geq r} \subset \mathcal{T}_{\geq s}$ if and only
 if $r \geq s$, and $\mathcal{T}_{\geq r} \subsetneq \mathcal{T}_{\geq s}$ implies $r > s$.
   In \cite{BST1}, Br$\mathrm{\ddot{u}}$stle, Smith and Treffinger proved that
 under some conditions on the stability function, the chain of torsion classes induced by the stability function is
 a maximal green sequence in $\mathcal{A}$.

 On the other hand, the important examples of stability functions are given by central charges. Let $\mathcal{A}$ be
 an abelian length categories with exactly $n$ nonisomorphic simple objects $S_1, S_2, \dots, S_n$. We know that the  Grothendieck
 group $K_0(\mathcal{A})$ of $\mathcal{A}$ is isomorphic to $\mathbb{Z}^n$.
 \begin{Definition}
A {\bf central charge} $Z$ on $\mathcal{A}$ is an additive map $Z: K_0(\mathcal{A}) \rightarrow \mathbb{C}$
which is given by \[Z(X) = \langle\alpha, [X]\rangle + \mathrm{i}\langle\beta, [X]\rangle\]
 for $X\in Obj(\mathcal{A})$. Here $\alpha\in \mathbb{R}^n$ and $\beta\in \mathbb{R}_{>0}^n$ are fixed, and $\langle\cdot \,, \cdot\rangle$ is the canonical inner product on $\mathbb{R}^n$ and $\mathrm{i}= \sqrt{-1}$.
\end{Definition}

Since $\langle\beta, [X]\rangle > 0$ for any nonzero object $X$ in $\mathcal{A}$, then $Z(X)$ lies in the strict upper half space of the complex space.
 It is well-known that every central charge $Z$ on $\mathcal{A}$ determines a stability function $\phi_{Z}$ (see also the proof of Theorem \ref{main}),  which is given by
\[\phi_Z(X) = \frac{argZ(X)}{\pi}.\]

We say that  a maximal green sequence can be induced by a central charge if the stability function determined by a central charge induces this maximal green sequence.\\

This article is organized as follows.
 In Section \ref{2}, we study relations between maximal green sequences and  complete forward hom-orthogonal sequences.
 In Section \ref{3.1}, we study properties of maximal green sequences induced
 by stability functions. In Section \ref{3.2}, we define {\bf crossing inequalities} for maximal green sequences (see Definition \ref{as}),
 and then prove the following main result.

{\bf Theorem \ref{main}}\;
{\em    A maximal green sequence  $\mathcal{T}: 0 = \mathcal{T}_0\, \lessdot\, \mathcal{T}_1\, \lessdot\, \mathcal{T}_2\, \lessdot \,\dots\, \lessdot\, \mathcal{T}_m = \mathcal{A} $ in an abelian length category $\mathcal{A}$ is induced by some central charge $Z : K_0(\mathcal{A}) \rightarrow \mathbb{C}$ if and only if
 $\mathcal{T}$ satisfies crossing inequalities. }

In Section \ref{4.1}, for finite dimensional algebras, we formulate relations between maximal green sequences of torsion classes and maximal green sequences of $\tau$-tilting pairs, which are defined via $c$-vectors. In Section \ref{4.2}, we prove the Rotation Lemma for finite dimensional Jacobian algebras and apply Theorem \ref{main} to formulate relations of crossing inequalities between Jacobian algebras and its mutation.

\section{Correspondence between maximal green sequences and  complete forward hom-orthogonal sequences}\label{2}

\subsection{Complete forward hom-orthogonal sequences}
We recall the concept of complete forward hom-orthogonal sequences from \cite{Ig1, Ig2}.  Let us introduce some notations. Let $\mathcal{A}$
 be an abelian length category, $\mathcal{C}$ be a subcategory of $\mathcal{A}$  and $N$ be an object in $\mathcal{A}$.
 A {\bf wide subcategory} of $\mathcal{A}$ is an abelian subcategory closed under extensions.
  The full subcategory $N^{\bot}$ is defined to be $N^{\bot} : = \{X\in \mathcal{A} | Hom(N,X)=0\}$
 and the full subcategory $\mathcal{C}^{\bot}$ is defined to be $\mathcal{C}^{\bot} : = \{X\in \mathcal{A} | Hom(Y,X)=0, \forall Y\in \mathcal{C}\}$.
 The full subcategories $^{\bot}N$ and $^{\bot}\mathcal{C}$ are defined similarly.
  We also write $\mathcal{F}(N):= N^{\bot}$ and $\mathcal{G}(N):=
 {}^\bot{\mathcal{F}(N)}$ for every object $N\in Obj(\mathcal{A})$.

 Then it is clear that $\mathcal{F}(N) = \mathcal{G}(N)^{\bot}$ and
 $(\mathcal{F}(N),\,  \mathcal{G}(N))$ is a torsion pair in $\mathcal{A}$.

\begin{Proposition}[\cite{Ig2}]\label{gx}
 Suppose that $Hom(X, Y) = 0$ and $\mathcal{C} = X^{\bot} \cap {}^{\bot}Y$, then $\mathcal{G}(X) =
                         {}^{\bot}\mathcal{C} \cap {}^{\bot}Y$.
\end{Proposition}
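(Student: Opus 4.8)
The plan is to prove the two inclusions separately, after recording two elementary closure properties. First I would note that $X^{\bot}=\mathcal{F}(X)$ is closed under subobjects (if $I\hookrightarrow W$ with $W\in X^{\bot}$, then any map $X\to I$ followed by the inclusion lies in $Hom(X,W)=0$, hence vanishes), and that ${}^{\bot}Y$ is closed under quotients (if $Z\twoheadrightarrow Q$ with $Z\in{}^{\bot}Y$, then any map $Q\to Y$ precomposed with the epimorphism lies in $Hom(Z,Y)=0$, hence vanishes). I would also record at the outset that $Y\in X^{\bot}$, since this is exactly the hypothesis $Hom(X,Y)=0$.

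For the inclusion $\mathcal{G}(X)\subseteq{}^{\bot}\mathcal{C}\cap{}^{\bot}Y$, let $Z\in\mathcal{G}(X)={}^{\bot}(X^{\bot})$, so that $Hom(Z,W)=0$ for every $W\in X^{\bot}$. Since $Y\in X^{\bot}$ this immediately yields $Z\in{}^{\bot}Y$, and since $\mathcal{C}=X^{\bot}\cap{}^{\bot}Y\subseteq X^{\bot}$ it yields $Z\in{}^{\bot}\mathcal{C}$. Thus this direction is purely formal and uses only that both $\mathcal{C}$ and $Y$ sit inside $X^{\bot}$.

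The reverse inclusion is the crux. Let $Z\in{}^{\bot}\mathcal{C}\cap{}^{\bot}Y$; I must show $Z\in{}^{\bot}(X^{\bot})$, i.e. $Hom(Z,W)=0$ for an arbitrary $W\in X^{\bot}$. Suppose toward a contradiction that some $f:Z\to W$ is nonzero and set $I=\mathrm{im}(f)$, so $I\neq 0$. Since $I$ is a subobject of $W\in X^{\bot}$, the first closure property gives $I\in X^{\bot}$; since $I$ is a quotient of $Z\in{}^{\bot}Y$, the second gives $I\in{}^{\bot}Y$. Hence $I\in X^{\bot}\cap{}^{\bot}Y=\mathcal{C}$, so $Z\in{}^{\bot}\mathcal{C}$ forces $Hom(Z,I)=0$; but the corestriction $Z\twoheadrightarrow I$ is a nonzero element of $Hom(Z,I)$, a contradiction. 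The only genuine idea here is this image argument: taking the image of a map out of $Z$ into $X^{\bot}$ simultaneously passes to a subobject (keeping us in the subobject-closed $X^{\bot}$) and to a quotient of $Z$ (keeping us in the quotient-closed ${}^{\bot}Y$), so the image is automatically trapped in the smaller category $\mathcal{C}$. I expect no obstacle beyond correctly invoking these two closure properties, and the hypothesis $Hom(X,Y)=0$ enters only through $Y\in X^{\bot}$ in the easy inclusion.
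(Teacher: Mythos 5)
Your proof is correct and complete. The paper itself offers no proof of this proposition --- it is quoted from \cite{Ig2} --- and your argument is exactly the standard one: both inclusions are handled just as Igusa does, with the crux being the image factorization, where $I=\mathrm{im}(f)$ for $f:Z\to W$, $W\in X^{\bot}$, is simultaneously a subobject of $W$ (so $I\in X^{\bot}$, which is closed under subobjects) and a quotient of $Z$ (so $I\in {}^{\bot}Y$, which is closed under quotients), trapping $I$ in $\mathcal{C}$ and contradicting $Z\in{}^{\bot}\mathcal{C}$; you also correctly observe that the hypothesis $Hom(X,Y)=0$ enters only to place $Y$ in $X^{\bot}$ for the easy inclusion.
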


\begin{Definition}
An object $X$ in $\mathcal{A}$ is called a {\bf brick}, if $EndX$ is a division ring.
\end{Definition}

It is obvious that any brick is indecomposable. Let $\mathcal{S}$ be a subset of $obj(\mathcal{A})$, we use $Filt(\mathcal{S})$ to denote
the full subcategory of $\mathcal{A}$ consisting of objects having a finite filtration with subquotients
are isomorphic to indecomposable objects in $\mathcal{S}$, i.e., $X\in Filt{\mathcal{S}}$ if and only if there exists a finite filtration of
$X$: \[0=X_0 \subset  X_1 \subset X_2 \subset \dots \subset X_m =X\]
such that $X_i/X_{i-1}\in Ind(\mathcal{S})$ for all $i$. For an indecomposable object $X$, we will denote $Filt(\{X\})$ by  $Filt(X)$.

 The following lemma is well-known.

\begin{Lemma}[\cite{Rin}]
If $X$ is a brick in $\mathcal{A}$, then $Filt(X)$ is a wide subcategory of $\mathcal{A}$.
\end{Lemma}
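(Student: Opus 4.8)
The plan is to verify directly that $Filt(X)$ is a full additive subcategory of $\mathcal{A}$ closed under kernels, cokernels and extensions; since a full subcategory closed under kernels, cokernels and finite biproducts is automatically an abelian subcategory with exact inclusion, this gives exactly the wide subcategory property. Closure under extensions and under finite direct sums is immediate by concatenating filtrations, so the content is closure under kernels and cokernels of morphisms internal to $Filt(X)$, and this is where the brick hypothesis is essential. Since ordinary composition length is additive, each $Y\in Filt(X)$ has a well-defined $X$-length $\ell_X(Y)=l(Y)/l(X)$ (every filtration factor $\cong X$ contributes $l(X)$), and this is the quantity I would induct on throughout.

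The first key step is a brick lemma: for every $Y\in Filt(X)$, any nonzero morphism $X\to Y$ is a monomorphism and any nonzero morphism $Y\to X$ is an epimorphism. I would prove the monomorphism statement by induction on $\ell_X(Y)$ using the top quotient $\pi\colon Y\to Y/Y_{m-1}\cong X$ of a filtration of $Y$: given nonzero $f\colon X\to Y$, if $\pi f\neq 0$ then $\pi f$ is an isomorphism because $End(X)$ is a division ring, forcing $f$ to be a split monomorphism; if $\pi f=0$ then $f$ factors through the subobject $Y_{m-1}$ and I finish by induction. The epimorphism statement is dual, using the bottom subobject $Y_1\cong X$.

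The second step is closure under the relevant quotients: if $U\hookrightarrow Z$ with $U,Z\in Filt(X)$, then $Z/U\in Filt(X)$. The crucial case $U\cong X$ I handle by induction on $\ell_X(Z)$, comparing the inclusion $\iota\colon X\hookrightarrow Z$ with the top quotient $\pi$. When $\pi\iota\neq 0$ it is an isomorphism, so $Z$ splits as $Z_{m-1}\oplus\mathrm{im}(\iota)$ and $Z/X\cong Z_{m-1}\in Filt(X)$; when $\pi\iota=0$ the copy of $X$ lies inside $Z_{m-1}$, and the inductive hypothesis combined with the extension $0\to Z_{m-1}/X\to Z/X\to Z/Z_{m-1}\to 0$ and closure under extensions finishes the case. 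The general case of $U$ then follows by a further induction on $\ell_X(U)$, peeling off a bottom copy of $X$ and invoking the third isomorphism theorem.

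Finally I would prove closure under images: for $f\colon Y\to Z$ in $Filt(X)$, $\mathrm{im}(f)\in Filt(X)$, by induction on $\ell_X(Y)$. Writing $Y_1\cong X$ for the bottom step of a filtration of $Y$, the restriction $f|_{Y_1}$ has image $I_1$ equal to $0$ or $\cong X$ by the brick lemma; then $f$ induces $\bar f\colon Y/Y_1\to Z/I_1$ with $\mathrm{im}(\bar f)\cong\mathrm{im}(f)/I_1$, and since $Z/I_1\in Filt(X)$ by the previous step, the inductive hypothesis gives $\mathrm{im}(f)/I_1\in Filt(X)$, whence $\mathrm{im}(f)\in Filt(X)$ via the extension $0\to I_1\to\mathrm{im}(f)\to\mathrm{im}(f)/I_1\to 0$. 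Once images lie in $Filt(X)$, factoring $f$ as an epimorphism followed by a monomorphism yields $\mathrm{coker}(f)=Z/\mathrm{im}(f)\in Filt(X)$ by the quotient-closure step and $\ker(f)\in Filt(X)$ by its dual. I expect the main obstacle to be organizing these interlocking inductions without circularity: the brick property enters precisely at the two points where a nonzero self-map of $X$ is inverted to split off a copy of $X$, and keeping the logical order (brick lemma, then quotient closure, then image closure, then kernel closure by duality) clean is the delicate part of the argument.
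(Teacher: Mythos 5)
Your proposal is correct. Note that the paper contains no proof of this lemma to compare against: it is stated as well known and cited to Ringel \cite{Rin}, so your argument fills a gap rather than paralleling an in-paper proof, and the route you take --- the brick lemma (nonzero maps $X\to Y$ are mono, nonzero maps $Y\to X$ are epi, by induction on the $X$-length using the top quotient and bottom subobject), then closure of $Filt(X)$ under quotients by subobjects lying in $Filt(X)$, then closure under images, then cokernels and kernels --- is essentially the standard argument going back to Ringel. All the individual steps check out: the splitting via invertibility of nonzero endomorphisms of the brick, the two-stage induction for quotient closure (first $U\cong X$, then general $U$ via the third isomorphism theorem), and the image induction peeling off $Y_1\cong X$ with $\mathrm{im}(\bar f)\cong \mathrm{im}(f)/I_1$. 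The only compressed point is the final appeal to duality: kernel closure requires the dual of your quotient-closure induction (if $Z\twoheadrightarrow Q$ with $Z,Q\in Filt(X)$, then the kernel lies in $Filt(X)$, applied to $Y\twoheadrightarrow \mathrm{im}(f)$), and this does go through verbatim because both the definition of $Filt(X)$ and the brick hypothesis are self-dual; your stated ordering of the lemmas avoids the circularity you were worried about.
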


\begin{Definition}[\cite{Ig1, Ig2}]\label{cfho}
A {\bf complete forward hom-orthogonal sequence} (briefly, CFHO sequence) in $\mathcal{A}$ is a finite sequence of bricks $N_1, N_2, \dots , N_m$
 such that
 \begin{enumerate}
 \item[(i)] $Hom(N_i, N_j) = 0$ for all $1\leq i \lneqq j \leq m$;
 \item[(ii)] The sequence is maximal in $\mathcal{G}(N)$, where $N = N_1 \oplus N_2 \oplus \dots \oplus N_m$.
 By maximal we mean that no other bricks can be inserted into $N_1, N_2, \dots , N_m$ preserving (i);
 \item[(iii)] $\mathcal{G}(N) = \mathcal{A}$.
 \end{enumerate}
\end{Definition}

Note that  \cite{Ig2} (page 4) claims that if the sequence $N_1, N_2, \dots , N_m$ satisfies Definition \ref{cfho} (i), then the condition (ii) in this definition is equivalent to the fact that for all $k$, $$\mathcal{G}(N) \cap (N_1 \oplus \dots \oplus N_k)^{\bot} \cap  {}^{\bot}(N_{k+1} \oplus \dots \oplus N_m) = 0.$$

\begin{Corollary}\label{cgx}
Let $M_1, M_2, \dots , M_m$ be a complete forward hom-orthogonal sequence in $\mathcal{A}$, and let
 $M_0 = 0 = M_{m+1}$, $X_i = M_0 \oplus M_1 \oplus \dots \oplus M_i$ and $Y_i = M_{i+1} \oplus \dots
 \oplus M_m \oplus M_{m+1}$ for $0 \leq i \leq m$. Then $\mathcal{G}(X_i) = {}^{\bot}Y_i$ for every $0 \leq i \leq m$.
\end{Corollary}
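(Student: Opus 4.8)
The plan is to apply Proposition \ref{gx} to the pair $(X_i, Y_i)$ for each fixed $i$, and then feed in the maximality condition of Definition \ref{cfho}(ii) in the equivalent form recorded in the remark following that definition. First I would verify the hypothesis of Proposition \ref{gx}, namely $Hom(X_i, Y_i) = 0$. Since $Hom$ distributes over finite direct sums, it suffices to check $Hom(M_a, M_b) = 0$ whenever $M_a$ is a summand of $X_i$ (so $0 \leq a \leq i$) and $M_b$ is a summand of $Y_i$ (so $i+1 \leq b \leq m+1$). If $a = 0$ or $b = m+1$ the corresponding summand is $0$ and the claim is trivial; otherwise $1 \leq a \leq i < i+1 \leq b \leq m$ forces $a < b$, so $Hom(M_a, M_b) = 0$ by condition (i) of Definition \ref{cfho}.

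With the hypothesis in place, Proposition \ref{gx} applied with its ``$X$'' taken to be $X_i$ and its ``$Y$'' taken to be $Y_i$ yields
\[ \mathcal{G}(X_i) = {}^{\bot}\mathcal{C}_i \cap {}^{\bot}Y_i, \qquad \mathcal{C}_i := X_i^{\bot} \cap {}^{\bot}Y_i. \]
The decisive step is to show $\mathcal{C}_i = 0$. Because $M_0 = 0$ and $M_{m+1} = 0$, we have $X_i \cong M_1 \oplus \dots \oplus M_i$ and $Y_i \cong M_{i+1} \oplus \dots \oplus M_m$, so $\mathcal{C}_i$ is precisely the subcategory $(M_1 \oplus \dots \oplus M_i)^{\bot} \cap {}^{\bot}(M_{i+1} \oplus \dots \oplus M_m)$ appearing in the remark after Definition \ref{cfho}. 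That remark, together with condition (iii), which lets us replace the factor $\mathcal{G}(N)$ in the displayed intersection by $\mathcal{A}$, says exactly that this intersection vanishes for every index; hence $\mathcal{C}_i = 0$.

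It then remains to observe that ${}^{\bot}\mathcal{C}_i = {}^{\bot}0 = \mathcal{A}$, since the only condition for an object to be left-orthogonal to the zero subcategory is vacuous. Substituting back gives $\mathcal{G}(X_i) = \mathcal{A} \cap {}^{\bot}Y_i = {}^{\bot}Y_i$, as claimed.

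I expect the point needing the most care to be the boundary indices $i = 0$ and $i = m$, where one of $X_i, Y_i$ is the zero object. For $i = m$ one has $Y_m = 0$, so ${}^{\bot}Y_m = \mathcal{A}$, while $\mathcal{G}(X_m) = \mathcal{G}(N) = \mathcal{A}$ by condition (iii), and the two sides agree directly. For $i = 0$ one has $X_0 = 0$, so the identity reduces to $\mathcal{G}(0) = {}^{\bot}N = 0$; here I would make sure to invoke the equivalent form of (ii) with the index convention that includes this endpoint, or equivalently argue by hand that a nonzero object of ${}^{\bot}N$ would have a simple quotient again lying in ${}^{\bot}N$ (as ${}^{\bot}N$ is closed under quotients), producing a brick that could be prepended to the sequence and contradicting maximality. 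This endpoint bookkeeping, rather than any deep difficulty, is the only subtle part; the core of the argument is the single application of Proposition \ref{gx} combined with the vanishing $\mathcal{C}_i = 0$ supplied by maximality.
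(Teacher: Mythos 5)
Your proposal is correct and takes essentially the same route as the paper: check $Hom(X_i,Y_i)=0$ from condition (i) of Definition \ref{cfho}, use the remark following that definition together with condition (iii) (replacing $\mathcal{G}(N)$ by $\mathcal{A}$) to conclude $\mathcal{C}_i = X_i^{\bot}\cap{}^{\bot}Y_i=0$, and then apply Proposition \ref{gx} with ${}^{\bot}0=\mathcal{A}$. Your extra bookkeeping at the endpoints $i=0$ and $i=m$ (including the by-hand argument that a nonzero object of ${}^{\bot}N$ would yield a simple brick in ${}^{\bot}N$ that could be prepended, violating maximality) is sound, but the paper simply absorbs these cases into the same computation by letting the index in the remark run over all $0\leq k\leq m$.
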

\begin{proof}
Since $M_1, M_2, \dots , M_m$ is a complete forward hom-orthogonal sequence, we have $Hom(X_i, Y_i) = 0$
 and $\mathcal{C}_i = X^{\bot}_i \cap {}^{\bot}Y_i = \mathcal{A} \cap X^{\bot}_i \cap {}^{\bot}Y_i =
 \mathcal{G}(M) \cap X^{\bot}_i \cap {}^{\bot}Y_i =0$. It follows from Proposition \ref{gx} that
 $\mathcal{G}(X_i) = {}^{\bot}Y_i$.
\end{proof}

 In \cite{Ig2}, Igusa also proved the following property of complete forward hom-orthogonal sequences, which
 shows simple objects are important ingredients in a complete forward hom-orthogonal sequence.

\begin{Lemma}[\cite{Ig2}]\label{sim}
 Let $N_1, N_2, \dots , N_m$ be a complete forward hom-orthogonal sequence in $\mathcal{A}$.
 Then the sequence contains all simple objects (up to isomorphism) in $\mathcal{A}$. Moreover $N_1$ and $N_m$
 are simple objects.
\end{Lemma}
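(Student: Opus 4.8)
The plan is to exploit the reformulation of maximality recorded right after Definition \ref{cfho}: since condition (iii) gives $\mathcal{G}(N)=\mathcal{A}$, that reformulation says precisely that
\[ X_k^{\bot}\cap{}^{\bot}Y_k = 0 \qquad\text{for every } 0\le k\le m, \]
where $X_k = N_1\oplus\cdots\oplus N_k$ and $Y_k = N_{k+1}\oplus\cdots\oplus N_m$ (this is exactly the vanishing $\mathcal{C}_k=0$ appearing in the proof of Corollary \ref{cgx}, with $M_i$ renamed $N_i$; the two extreme cases read ${}^{\bot}N=0$ at $k=0$ and $N^{\bot}=0$ at $k=m$). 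Thus, to forbid a given brick $S$ from being ``absent'', it suffices to exhibit an index $k$ with $Hom(N_i,S)=0$ for all $i\le k$ and $Hom(S,N_j)=0$ for all $j>k$, since such an $S$ would be a nonzero object of $X_k^{\bot}\cap{}^{\bot}Y_k=0$. Throughout I will use one elementary brick fact: if $N$ is a brick, $S$ is simple, and there exist both an epimorphism $N\twoheadrightarrow S$ and a monomorphism $S\hookrightarrow N$, then $N\cong S$; indeed the composite $N\twoheadrightarrow S\hookrightarrow N$ is a nonzero endomorphism of the brick $N$, hence an isomorphism, and its image $\cong S$ forces $l(N)=l(S)=1$.

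For the first assertion, suppose $S$ is a simple object with $S\not\cong N_j$ for all $j$, and set
\[ A=\{\,i : Hom(N_i,S)\neq 0\,\},\qquad B=\{\,j : Hom(S,N_j)\neq 0\,\}. \]
Since $S$ is simple, any nonzero map $N_i\to S$ is an epimorphism and any nonzero map $S\to N_j$ is a monomorphism. I would show $\max B<\min A$ (with the conventions $\max\varnothing=0$, $\min\varnothing=m+1$), so that any integer $k$ with $\max B\le k<\min A$ lies in $\{0,\dots,m\}$ and produces the forbidden object $S\in X_k^{\bot}\cap{}^{\bot}Y_k=0$, a contradiction. To obtain this inequality I argue that $i>j$ for all $i\in A$ and $j\in B$. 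First, $A\cap B=\varnothing$: an index in $A\cap B$ would give both $N_i\twoheadrightarrow S$ and $S\hookrightarrow N_i$, whence $N_i\cong S$ by the brick fact, contradicting $S\not\cong N_i$. Second, one cannot have $i\in A$ and $j\in B$ with $i<j$: composing $N_i\twoheadrightarrow S\hookrightarrow N_j$ would give a nonzero map $N_i\to N_j$, contradicting $Hom(N_i,N_j)=0$ from Definition \ref{cfho}(i). Hence every element of $A$ exceeds every element of $B$, that is $\max B<\min A$, and the first assertion follows.

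For the refinement I treat $N_1$ (the case of $N_m$ being dual). Assume $N_1$ is not simple and pick a simple quotient $S$ of $N_1$, so that $l(S)<l(N_1)$ and in particular $S\not\cong N_1$. For $j\ge 2$, any map $S\to N_j$ precomposed with $N_1\twoheadrightarrow S$ yields a map $N_1\to N_j$, which vanishes by (i); as $N_1\twoheadrightarrow S$ is epic this forces $Hom(S,N_j)=0$. Moreover $Hom(S,N_1)=0$: a nonzero $S\to N_1$ is monic and, composed with $N_1\twoheadrightarrow S$, gives a nonzero non-isomorphism of the brick $N_1$ (its image $\cong S$ is proper because $N_1$ is not simple), which is impossible. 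Thus $Hom(S,N)=0$, i.e. $S\in{}^{\bot}N=0$, contradicting $S\neq 0$; hence $N_1$ is simple. Dually, a simple subobject $S$ of $N_m$ satisfies $Hom(N_j,S)=0$ for all $j$ (using (i) for $j<m$ and the brick property of $N_m$ for $j=m$), so $S\in N^{\bot}=0$, forcing $N_m$ to be simple.

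The main obstacle is the crossing inequality $\max B<\min A$ in the first step: this is the one place where the forward hom-orthogonality (i), the brick property, and the simplicity of $S$ must be combined simultaneously, and it is precisely what guarantees a single admissible slot $k$ rather than merely pairwise compatibilities. Everything else is bookkeeping around Corollary \ref{cgx} and the reformulated maximality condition.
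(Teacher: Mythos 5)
Your proof is correct. One point of context: the paper does not actually prove Lemma \ref{sim} --- it is quoted from \cite{Ig2} without proof --- so there is no in-paper argument to compare against; what can be checked is whether your argument is sound and consistent with the tools the paper does set up, and it is. You correctly combine the reformulation of maximality recorded after Definition \ref{cfho} with condition (iii) to obtain $X_k^{\bot}\cap{}^{\bot}Y_k=0$ for all $0\le k\le m$, which is precisely the vanishing $\mathcal{C}_k=0$ that the paper itself extracts in the proof of Corollary \ref{cgx} (including the extreme cases ${}^{\bot}N=0$ and $N^{\bot}=0$). The bookkeeping is sound: since $S$ is simple, nonzero maps $N_i\to S$ are epic and nonzero maps $S\to N_j$ are monic; your brick fact correctly rules out $A\cap B\neq\varnothing$, and hom-orthogonality rules out $i\in A$, $j\in B$ with $i<j$, so $\max B<\min A$ and $k=\max B$ places the missing simple inside the zero category $\mathcal{C}_k$ --- note the vanishing kills \emph{all} nonzero objects, so you do not even need $S$ to be a brick here, though of course it is one. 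The arguments for $N_1$ and $N_m$ via a simple quotient, respectively subobject, are likewise correct (the needed simple quotient and subobject exist because $\mathcal{A}$ is a length category). Since the paper records that maximality (ii) is equivalent to the vanishing condition, your route through $\mathcal{C}_k=0$ is interchangeable with the more direct ``insertion'' phrasing (a simple object absent from the list could be inserted at the slot $k$, contradicting maximality); the two are the same argument in different clothing.
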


\begin{Corollary}
If $\mathcal{A}$ admits a complete forward hom-orthogonal sequence, then there are only
 finite simple objects in $\mathcal{A}$ up to isomorphism.
\end{Corollary}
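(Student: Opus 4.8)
The plan is to read this off directly from Lemma \ref{sim}. Suppose $\mathcal{A}$ admits a complete forward hom-orthogonal sequence. By Definition \ref{cfho}, such a sequence is by hypothesis a \emph{finite} sequence of bricks $N_1, N_2, \dots, N_m$, so it consists of exactly $m$ terms for some natural number $m$.

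Next I would invoke Lemma \ref{sim}, which asserts that the sequence $N_1, N_2, \dots, N_m$ contains, up to isomorphism, every simple object of $\mathcal{A}$; that is, each simple object of $\mathcal{A}$ is isomorphic to some $N_i$ with $1 \leq i \leq m$. Since the sequence has only $m$ terms, the map sending a simple object to (a term of the sequence isomorphic to) it shows that the number of isomorphism classes of simple objects in $\mathcal{A}$ is bounded above by $m$. Hence $\mathcal{A}$ has at most $m$ nonisomorphic simple objects, which is finite.

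There is no genuine obstacle here: the entire content of the corollary is transferred through the finiteness already imposed on a complete forward hom-orthogonal sequence, and the work of showing that all simples appear in the sequence is done by Lemma \ref{sim}. The only point that deserves a sentence of care is the elementary observation that a finite list can represent only finitely many isomorphism types, so the simple objects of $\mathcal{A}$ indeed form a finite set up to isomorphism.
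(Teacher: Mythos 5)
Your proof is correct and is exactly the argument the paper intends: the corollary is stated without proof as an immediate consequence of Lemma \ref{sim}, since the finite sequence $N_1,\dots,N_m$ contains every simple object up to isomorphism, bounding the number of isomorphism classes of simples by $m$. Nothing is missing.
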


\subsection{Maximal green sequences and CFHO sequences}
Minimal extending objects for a torsion class were introduced by Barnard, Carroll, and Zhu in \cite{BCZ}
 to study covers of the torsion class.
\begin{Definition}[\cite{BCZ}]\label{MEO}
Suppose $\mathcal{T}$ is a torsion class in $\mathcal{A}$. An object $M$ in $\mathcal{A}$ is called
 a {\bf minimal extending object} for $\mathcal{T}$ provided with the following conditions:
 \begin{enumerate}
 \item[(i)] Every proper factor of $M$ is in $\mathcal{T}$;
 \item[(ii)] If $0 \rightarrow M \rightarrow X \rightarrow T \rightarrow 0$ is a non-split exact sequence with
     $T\in \mathcal{T}$, then $X\in \mathcal{T}$;
 \item[(iii)] $\mathrm{Hom}(\mathcal{T}, M) = 0$.
 \end{enumerate}
\end{Definition}

Note that if $M$ is a minimal extending object for a torsion class $\mathcal{T}$, then $M$ is indecomposable
 by Definition \ref{MEO} (i). Moreover, assuming (i), then (iii) is equivalent to the fact that
 $M\notin \mathcal{T}$. We write $[M]$ for the isoclass of the object $M$, $ME(\mathcal{T})$ for the set
 of isoclasses $[M]$ such that $M$ is a minimal extending object for $\mathcal{T}$, and $Filt(\mathcal{T}
 \cup \{M\})$ for the iterative extension closure of $Filt(\mathcal{T})\cup M$. The following results
 was proved for the category of finitely generated modules over a finite-dimensional algebra in \cite{BCZ}.
 The results in Section 2 of \cite{BCZ} also hold for abelian length categories.

\begin{Proposition}[\cite{BCZ}]\label{pop} Suppose $\mathcal{T}$ is a torsion class in $\mathcal{A}$ and
 $M$ is an indecomposable object such that every proper factor of $M$ lies in $\mathcal{T}$. Then
 $Filt(\mathcal{T}\cup \{M\})$ is a torsion class and $M$ is a brick.
\end{Proposition}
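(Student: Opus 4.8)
The plan is to verify directly that $\mathcal{D} := Filt(\mathcal{T}\cup\{M\})$ is closed under extensions and under taking factors, and separately that $\mathrm{End}(M)$ is a division ring. Closure under extensions is immediate, since $\mathcal{D}$ is by definition the extension closure of $\mathcal{T}\cup\{M\}$: given a short exact sequence $0\to A\to B\to C\to 0$ with $A,C\in\mathcal{D}$, concatenating a filtration of $A$ with the preimage in $B$ of a filtration of $C$ produces a filtration of $B$ whose subquotients lie among the indecomposables of $\mathcal{T}$ and $M$, so $B\in\mathcal{D}$.

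For closure under factors I would argue by induction on the length $n$ of a filtration of $X\in\mathcal{D}$ with subquotients in $\mathcal{T}\cup\{M\}$. In the base case $n\le 1$, the object $X$ either lies in $\mathcal{T}$, whence every quotient lies in $\mathcal{T}\subseteq\mathcal{D}$ because $\mathcal{T}$ is closed under factors, or is isomorphic to $M$, whence every proper quotient lies in $\mathcal{T}$ by hypothesis while the only non-proper quotient is $M$ itself; in all cases the quotient lies in $\mathcal{D}$. For the inductive step, let $q\colon X\twoheadrightarrow Y$ be an epimorphism and let $X_1\subseteq X$ be the bottom term of the filtration, so $X_1\in\mathcal{T}\cup\{M\}$ and $X/X_1\in\mathcal{D}$ admits a filtration of length $n-1$. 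I would then exploit the short exact sequence
\[0\longrightarrow q(X_1)\longrightarrow Y\longrightarrow Y/q(X_1)\longrightarrow 0.\]
Here $q(X_1)$ is a quotient of $X_1$, hence lies in $\mathcal{D}$ by the base-case analysis, while $Y/q(X_1)\cong X/(X_1+\ker q)$ is a quotient of $X/X_1$, hence lies in $\mathcal{D}$ by the inductive hypothesis. Closure under extensions then yields $Y\in\mathcal{D}$, completing the induction.

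For the brick statement I would use that an indecomposable object of finite length has local endomorphism ring: by Fitting's lemma every $f\in\mathrm{End}(M)$ is either an isomorphism or nilpotent. Here one needs $M\notin\mathcal{T}$, which holds in the minimal-extending setting of interest, being equivalent to condition (iii) of Definition \ref{MEO} under hypothesis (i) by the remark following it. Suppose some nonzero $f$ is not invertible, hence nilpotent; then $f$ is neither mono nor epi and $\mathrm{im}\,f\ne 0$. Thus $\mathrm{im}\,f\cong M/\ker f$ is a proper factor of $M$, since $\ker f\ne 0$, and $M/\mathrm{im}\,f$ is a proper factor of $M$, since $\mathrm{im}\,f\ne 0$; both therefore lie in $\mathcal{T}$ by hypothesis. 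Applying extension-closure of $\mathcal{T}$ to $0\to\mathrm{im}\,f\to M\to M/\mathrm{im}\,f\to 0$ would force $M\in\mathcal{T}$, a contradiction. Hence every nonzero endomorphism of $M$ is invertible, so $\mathrm{End}(M)$ is a division ring and $M$ is a brick.

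I expect the main obstacle to be the factor-closure step: one must correctly exhibit $Y/q(X_1)$ as a quotient of the shorter-filtered object $X/X_1$, via the isomorphism $Y/q(X_1)\cong X/(X_1+\ker q)$, so that the induction closes, and verify that the bottom term $X_1$, whether it equals $M$ or lies in $\mathcal{T}$, always has all of its quotients inside $\mathcal{D}$. The brick part, by contrast, is short once one invokes locality of the endomorphism ring together with the assumption $M\notin\mathcal{T}$.
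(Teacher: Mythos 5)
The paper contains no internal proof of Proposition \ref{pop}: it is quoted from \cite{BCZ} with only the remark that the arguments of Section 2 there carry over to abelian length categories, so there is nothing in the text for your argument to diverge from. Your proof is correct and is essentially the standard one. Extension-closure of $Filt(\mathcal{T}\cup\{M\})$ is formal; factor-closure by induction on filtration length works exactly as you set it up, the key identification $Y/q(X_1)\cong X/(X_1+\ker q)$ being the right way to pass the epimorphism down to the shorter-filtered object $X/X_1$; and this suffices because, as recorded in the paper's introduction, a full subcategory of an abelian length category closed under extensions and factors is a torsion class.

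The point worth stressing is the one you spotted yourself: as printed, the brick half of the statement is false without the extra hypothesis $M\notin\mathcal{T}$. Take $\mathcal{A}=\mathrm{mod}\,K[x]/(x^2)$, $\mathcal{T}=\mathcal{A}$ and $M=K[x]/(x^2)$: then $M$ is indecomposable, every proper factor of $M$ lies in $\mathcal{T}$, and $Filt(\mathcal{T}\cup\{M\})=\mathcal{A}$ is a torsion class, yet $End(M)\cong K[x]/(x^2)$ is not a division ring. You correctly observe that $M\notin\mathcal{T}$ is equivalent to condition (iii) of Definition \ref{MEO} in the presence of (i), and this hypothesis does hold in every application the paper makes of the proposition (Lemma \ref{lem}, and Theorem \ref{m1}(i) via minimal extending objects), so your patched statement is the one actually needed; flagging this is a genuine improvement on the quoted statement rather than a gap in your proof. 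Two cosmetic remarks: Fitting's lemma is more machinery than required, since on a finite-length object any non-invertible endomorphism $f$ is automatically neither mono nor epi (a mono or epi endomorphism is an isomorphism by length count), which already exhibits $\mathrm{im}\,f$ as a nonzero proper factor of $M$; and once $\mathrm{im}\,f\in\mathcal{T}$ one could alternatively derive the contradiction directly from $Hom(\mathcal{T},M)=0$ via the inclusion $\mathrm{im}\,f\hookrightarrow M$, instead of through extension-closure of $\mathcal{T}$ --- though your version, using only $M\notin\mathcal{T}$, is if anything slightly more economical in its hypotheses.
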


The following result was proved for finite dimensional algebras in \cite{BCZ}. We give a new proof for abelian length categories.
\begin{Lemma}[\cite{BCZ}]\label{lem}
Let $\mathcal{T}$ be a torsion class in $\mathcal{A}$ and $M\notin \mathcal{T}$ be an indecomposable object in $\mathcal A$ such
that each proper factor of $M$ is in $\mathcal{T}$. Let $N\in Filt(\mathcal{T}\cup \{M\})\backslash \mathcal{T}$ such that each proper
factor of $N$ lies in $\mathcal{T}$. If $Filt(\mathcal{T}\cup \{M\}) \gtrdot \mathcal{T}$, then $M \cong N$.
\end{Lemma}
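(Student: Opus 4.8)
The plan is to show that $M$ and $N$ generate the same torsion class, and then to extract mutual monomorphisms $M \hookrightarrow N$ and $N \hookrightarrow M$; in an abelian length category these force $M \cong N$.

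First I would record two preliminary observations. Since $N \notin \mathcal{T}$ and every proper factor of $N$ lies in $\mathcal{T}$, a nontrivial decomposition $N = A \oplus B$ would exhibit $A$ and $B$ as proper factors of $N$, hence objects of $\mathcal{T}$, forcing $N \in \mathcal{T}$; so $N$ is indecomposable, and Proposition \ref{pop} then gives that $N$ is a brick and that $Filt(\mathcal{T} \cup \{N\})$ is a torsion class. Writing $\mathcal{F} = \mathcal{T}^{\bot}$ for the torsion-free class of the torsion pair $(\mathcal{T}, \mathcal{F})$, I claim next that $M, N \in \mathcal{F}$. Indeed, in the canonical sequence $0 \rightarrow tN \rightarrow N \rightarrow N/tN \rightarrow 0$ with $tN \in \mathcal{T}$ and $N/tN \in \mathcal{F}$, if $tN \neq 0$ then $N/tN$ is a proper factor of $N$, so it lies in $\mathcal{T} \cap \mathcal{F} = 0$, whence $N = tN \in \mathcal{T}$, a contradiction; thus $tN = 0$ and $N \in \mathcal{F}$, and the identical argument gives $M \in \mathcal{F}$.

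Then I would invoke the covering hypothesis. As $N \in Filt(\mathcal{T} \cup \{M\})$ and the latter is extension-closed, we have $Filt(\mathcal{T} \cup \{N\}) \subseteq Filt(\mathcal{T} \cup \{M\})$, while $N \notin \mathcal{T}$ gives $\mathcal{T} \subsetneq Filt(\mathcal{T} \cup \{N\})$. Since $Filt(\mathcal{T} \cup \{N\})$ is a torsion class and $Filt(\mathcal{T} \cup \{M\}) \gtrdot \mathcal{T}$, the covering relation leaves no torsion class strictly between $\mathcal{T}$ and $Filt(\mathcal{T} \cup \{M\})$, so $Filt(\mathcal{T} \cup \{N\}) = Filt(\mathcal{T} \cup \{M\})$. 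In particular $M \in Filt(\mathcal{T} \cup \{N\})$ and $N \in Filt(\mathcal{T} \cup \{M\})$.

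The heart of the argument is to read off an embedding from the bottom step of a filtration. Choose a filtration $0 = M_0 \subset M_1 \subset \dots \subset M_r = M$ witnessing $M \in Filt(\mathcal{T} \cup \{N\})$, so each subquotient $M_i/M_{i-1}$ lies in $\mathcal{T}$ or is isomorphic to $N$. The bottom term $M_1$ is a subobject of $M \in \mathcal{F}$, and torsion-free classes are closed under subobjects, so $M_1 \in \mathcal{F}$; were $M_1 \in \mathcal{T}$ we would get $M_1 \in \mathcal{T} \cap \mathcal{F} = 0$, contradicting strictness of the filtration. Hence $M_1 \cong N$, giving a monomorphism $N \hookrightarrow M$, and symmetrically a filtration of $N$ in $Filt(\mathcal{T} \cup \{M\})$ yields $M \hookrightarrow N$. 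These two monomorphisms force $l(N) \leq l(M)$ and $l(M) \leq l(N)$, so $l(M) = l(N)$; since a monomorphism between objects of equal finite length is necessarily an isomorphism, we conclude $M \cong N$. The main obstacle I anticipate is precisely the step ruling out $M_1 \in \mathcal{T}$: this is where the observation $M, N \in \mathcal{F} = \mathcal{T}^{\bot}$, combined with closure of torsion-free classes under subobjects, does the real work, upgrading the abstract membership $M \in Filt(\mathcal{T} \cup \{N\})$ into a genuine subobject embedding.
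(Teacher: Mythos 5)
Your proof is correct, but its engine is genuinely different from the paper's. Both arguments start the same way: $N$ is indecomposable, Proposition \ref{pop} makes $Filt(\mathcal{T}\cup\{N\})$ a torsion class, and the covering hypothesis forces $Filt(\mathcal{T}\cup\{N\}) = Filt(\mathcal{T}\cup\{M\})$. From there the paper works entirely with Hom-spaces: it first shows $Hom(M,N)\neq 0$ and $Hom(N,M)\neq 0$ (if, say, $Hom(N,M)=0$, then combined with $Hom(\mathcal{T},M)=0$ one gets $Hom(Filt(\mathcal{T}\cup\{N\}),M)=0$, contradicting $M\in Filt(\mathcal{T}\cup\{N\})$); then, assuming $M\ncong N$, it argues that nonzero morphisms $f\colon M\to N$ and $g\colon N\to M$ must be monomorphisms (neither can be epi, and a non-mono would place $N$, respectively $M$, in $\mathcal{T}$ via its image and cokernel), and finally it derives the contradiction from the brick property: $gf$ is a nonzero endomorphism of the brick $M$, hence an isomorphism, forcing $g$ to be epi. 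You obtain the mutual monomorphisms structurally instead: having checked $M,N\in\mathcal{F}=\mathcal{T}^{\bot}$ via the canonical sequence, you observe that the bottom term of any filtration witnessing $M\in Filt(\mathcal{T}\cup\{N\})$ is a subobject of the torsion-free object $M$, hence lies in $\mathcal{F}$ and cannot lie in $\mathcal{T}$, hence is a copy of $N$ --- and symmetrically --- and you close with a length count rather than the brick calculus. Your route never uses endomorphism rings (the brick conclusion of Proposition \ref{pop} goes unused) and replaces the paper's nonzero-Hom argument and epi/mono case analysis with a single dichotomy on the bottom filtration term; the paper's route, in exchange, avoids invoking the torsion pair $(\mathcal{T},\mathcal{T}^{\bot})$ and canonical sequences, and yields the extra information that every nonzero morphism between $M$ and $N$ is a monomorphism. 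One cosmetic point in your write-up: strictness of the filtration (nonzero subquotients) should be noted, though it is automatic here since the subquotients in the definition of $Filt$ are isomorphic to indecomposable, hence nonzero, objects.
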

\begin{proof}
It is clear that $N$ is indecomposable. By Proposition \ref{pop}, the full subcategory $Filt(\mathcal{T}\cup \{N\})$
is a torsion class satisfying that $\mathcal{T} \subsetneq Filt(\mathcal{T}\cup \{N\}) \subset Filt(\mathcal{T}\cup \{M\})$,
which implies that $Filt(\mathcal{T}\cup \{N\}) = Filt(\mathcal{T}\cup \{M\})$ since $Filt(\mathcal{T}\cup \{M\}) \gtrdot \mathcal{T}$.

We claim that $Hom(M, N) \neq 0$ and $Hom(N, M) \neq 0$. Note that $Hom(\mathcal{T}, M) = 0$,
 since each proper factor of $M$ is in $\mathcal{T}$ and $M\notin \mathcal{T}$.  If $Hom(N, M) = 0$,
 then it is easy to see that $Hom(Filt(\mathcal{T}\cup \{N\}),\,\, M)=0$. This contradicts to the fact
 that $M\in Filt(\mathcal{T}\cup \{M\}) = Filt(\mathcal{T}\cup \{N\})$. Then $Hom(M, N) \neq 0$. Similarly,
 we have that $Hom(N, M) \neq 0$.

 Suppose that $M \ncong N$.  Let $f: M\rightarrow N$ and $g: N\rightarrow M$ be two nonzero morphisms.
  Then $f$ and $g$ are not epimorphisms. Otherwise, one would be a proper factor of the other,
 which contradicts to the facts that $M\notin \mathcal{T}$ and $N\notin \mathcal{T}$. Thus $cokerf$ is a proper factor of $N$ and therefore belongs to $\mathcal{T}$.

 If $f$ is not a monomorphism, then $Imf$ is a proper factor of $M$. Then $Imf$ and $cokerf$ belong to $\mathcal{T}$,
  that implies that $M\in \mathcal{T}$, which contradicts to $M\not\in \mathcal{T}$. Hence $f$ is a monomorphism and similarly $g$ is also a monomorphism.

 Note that $gf \neq 0$, since $f\not=0$ and $g$ is a monomorphism. Therefore $gf: M\rightarrow M$ is
 an isomorphism since $M$ is a brick. This implies $g$ is an epimorphism, which is a
 contradiction.

 Thus $M\cong N$.
\end{proof}
\begin{Theorem}[\cite{BCZ}]\label{mini}  Suppose $\mathcal{T}$ is a torsion class in $\mathcal{A}$. Then the map
                           $\eta_{\mathcal{T}}\,:\, [M]\,\mapsto\,Filt(\mathcal{T}\cup \{M\})$
                           is a bijection from the set  $ME(\mathcal{T})$ to the set of $\mathcal{T}'$
                           such that $\mathcal{T} \lessdot \mathcal{T}'$. Moreover, for each
                           such $\mathcal{T}'$, there exists a unique indecomposable
                           object $M$ such that $\mathcal{T}' = Filt(\mathcal{T}\cup \{M\})$, and in this case,
                           $M$ is a minimal extending object for $\mathcal{T}$.
                           Furthermore, the map $Filt(\mathcal{T}\cup \{M\}) \mapsto [M]$ is the
                           inverse to $\eta_{\mathcal{T}}$.
\end{Theorem}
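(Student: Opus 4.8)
The plan is to show that $\eta_{\mathcal T}$ is well defined with image exactly the covers of $\mathcal T$, and then to produce a two-sided inverse. The three conditions of Definition \ref{MEO} enter at different points, with condition (ii) carrying the real weight. Throughout I write $0\to tZ\to Z\to Z/tZ\to 0$ for the canonical sequence of $Z$ relative to the torsion pair $(\mathcal T,\mathcal F)$ attached to $\mathcal T$, and I use freely that torsion classes are closed under quotients and extensions.

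First I would show $\eta_{\mathcal T}$ is well defined, i.e. that $\mathcal T'':=Filt(\mathcal T\cup\{M\})$ covers $\mathcal T$ whenever $[M]\in ME(\mathcal T)$. By Proposition \ref{pop} the subcategory $\mathcal T''$ is a torsion class strictly containing $\mathcal T$, since $M\notin\mathcal T$. It suffices to check that every torsion class $\mathcal X$ with $\mathcal T\subsetneq\mathcal X\subseteq\mathcal T''$ contains $M$, for then $\mathcal X=\mathcal T''$. Fixing $Z\in\mathcal X\setminus\mathcal T$, I argue by induction on $l(Z)$ that $M\in\mathcal X$. The quotient $Z/tZ$ again lies in $\mathcal X\setminus\mathcal T$; if $tZ\neq0$ it is shorter and I conclude by induction, so I may assume $Z\in\mathcal F$. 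Then the first nonzero term of a filtration of $Z$ witnessing $Z\in\mathcal T''$ is a subobject of $Z\in\mathcal F$, hence cannot be a torsion object, so it is a copy of $M$ and gives a monomorphism $M\hookrightarrow Z$. If $Z/M\notin\mathcal T$ I induct on $Z/M$; if $Z/M\in\mathcal T$, then condition (ii) of Definition \ref{MEO} forces $0\to M\to Z\to Z/M\to0$ to split (otherwise $Z\in\mathcal T$), so $M$ is a direct summand, hence a quotient, of $Z\in\mathcal X$, and $M\in\mathcal X$. I expect this induction, in which condition (ii) is precisely what absorbs the torsion quotients, to be the main obstacle.

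Next I would prove surjectivity onto the covers. Given $\mathcal T\lessdot\mathcal T'$, choose $N\in\mathcal T'\setminus\mathcal T$ of minimal length. Minimality forces every proper quotient of $N$ into $\mathcal T$ (a proper quotient outside $\mathcal T$ would lie in $\mathcal T'\setminus\mathcal T$ and be shorter), whence $N$ is indecomposable and, by Proposition \ref{pop}, a brick with $Filt(\mathcal T\cup\{N\})$ a torsion class; thus conditions (i) and (iii) hold (the latter since $N\notin\mathcal T$), and $\mathcal T\subsetneq Filt(\mathcal T\cup\{N\})\subseteq\mathcal T'$ together with the covering relation gives $Filt(\mathcal T\cup\{N\})=\mathcal T'$. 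For condition (ii), suppose $0\to N\to X\to T\to0$ is non-split with $T\in\mathcal T$ but $X\notin\mathcal T$. Since $X\in\mathcal T'$, a minimal-length quotient $Q$ of $X$ lying outside $\mathcal T$ again has all proper factors in $\mathcal T$, so by Lemma \ref{lem} (now applicable because $\mathcal T\lessdot Filt(\mathcal T\cup\{N\})$) one gets $Q\cong N$, and hence a surjection $X\twoheadrightarrow N$. The composite $N\hookrightarrow X\twoheadrightarrow N$ is either an isomorphism, forcing the sequence to split, or zero, in which case $N$ is a quotient of $X/N=T\in\mathcal T$ and so $N\in\mathcal T$; both are contradictions, using that $N$ is a brick. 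Hence $N\in ME(\mathcal T)$ with $\eta_{\mathcal T}([N])=\mathcal T'$.

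Finally I would settle injectivity and the inverse. If $M,M'\in ME(\mathcal T)$ and $Filt(\mathcal T\cup\{M\})=Filt(\mathcal T\cup\{M'\})$, then this common value covers $\mathcal T$ by the first step, $M'$ is indecomposable with proper factors in $\mathcal T$ by condition (i), and Lemma \ref{lem} yields $M\cong M'$; this is injectivity. For the uniqueness clause, if $\mathcal T'$ is a cover and $M'$ is \emph{any} indecomposable with $Filt(\mathcal T\cup\{M'\})=\mathcal T'$, a length comparison shows every proper factor of $M'$ lies in $\mathcal T$ (a proper factor outside $\mathcal T$ would contain an $M'$-subquotient and so be no shorter than $M'$), whence Proposition \ref{pop} and Lemma \ref{lem} identify $M'$ with the minimal object $N$ produced above; in particular $M'$ is a minimal extending object. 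This simultaneously shows that the assignment $Filt(\mathcal T\cup\{M\})\mapsto[M]$ is a well-defined two-sided inverse of $\eta_{\mathcal T}$, completing the proof.
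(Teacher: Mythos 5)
Your proof is correct. A point of comparison worth noting: the paper does not actually prove Theorem \ref{mini} --- it cites \cite{BCZ}, remarks that the arguments of Section 2 there carry over to abelian length categories, and only supplies its own proofs for the ingredients Proposition \ref{pop} and Lemma \ref{lem}. Your argument reconstructs the \cite{BCZ} proof along exactly the expected lines: well-definedness via the length induction in which condition (ii) of Definition \ref{MEO} absorbs the torsion quotients, surjectivity via a minimal-length object $N$ of $\mathcal{T}'\setminus\mathcal{T}$, and injectivity and the uniqueness clause via Lemma \ref{lem}. In particular, your use of Lemma \ref{lem} to verify condition (ii) for $N$ is legitimate because at that point you have already established $\mathcal{T}\lessdot Filt(\mathcal{T}\cup\{N\})=\mathcal{T}'$, which is exactly the covering hypothesis that lemma requires; this is a clean way to get (ii) that exploits the paper's lemma precisely where it is intended to be used. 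The only steps left tacit are routine and worth one line each in a final write-up: the minimal-length quotient $Q$ of $X$ is indecomposable (a nontrivial decomposition would exhibit $Q$ as an extension of two proper factors lying in $\mathcal{T}$, forcing $Q\in\mathcal{T}$); in the uniqueness step one should observe $M'\notin\mathcal{T}$, since otherwise $Filt(\mathcal{T}\cup\{M'\})=\mathcal{T}\neq\mathcal{T}'$; and condition (iii) for $N$ follows from $N\notin\mathcal{T}$ by the equivalence the paper records immediately after Definition \ref{MEO}, which you correctly invoke.
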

In \cite{BCZ}, the statement that $M$ is a minimal extending object for $\mathcal{T}$ in this case was given in the proof of this theorem.

The following results are the main tools for us to construct
 a stability function for a given class of maximal green sequence.
\begin{Theorem}\label{m2}
Suppose that the sequence $N_1, N_2, \dots , N_m$ is a complete forward hom-orthogonal sequence in
 $\mathcal{A}$. Let $\mathcal{G}_i = \mathcal{G}(N_0\oplus N_1 \oplus \dots N_{i})$ for
 each $0 \leq i \leq m$, where $N_0 = 0$. Then,

 (i)\; $\mathcal{G}_i = Filt(N_0, N_1, \dots, N_i)$;

(ii)\;  $N_i$ is a minimal extending object of $\mathcal{G}_{i-1}$ satisfying that $\mathcal{G}_i = Filt(\mathcal{G}_{i-1}\cup \{N_i\})$;

(iii)\; The sequence
 $0 = \mathcal{G}_0\, \lessdot\, \mathcal{G}_1\, \lessdot\, \mathcal{G}_2\, \lessdot \,\dots\,
 \lessdot\, \mathcal{G}_m = \mathcal{A} $ is a maximal green sequence in $\mathcal{A}$.
\end{Theorem}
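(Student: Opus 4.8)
The plan is to reduce all three parts to the single description $\mathcal{G}_i = {}^{\perp}(N_{i+1}\oplus\cdots\oplus N_m)$, which is exactly Corollary \ref{cgx} applied to the sequence $N_1,\dots,N_m$ (with $N_0=0=N_{m+1}$). From this several facts are immediate and I would record them first: since ${}^{\perp}(-)$ reverses inclusions, the chain $\mathcal{G}_0\subseteq\mathcal{G}_1\subseteq\cdots\subseteq\mathcal{G}_m$ is increasing; each $N_j$ with $j\le i$ lies in $\mathcal{G}_i$ because $\mathrm{Hom}(N_j,N_k)=0$ for $j<k$ by hom-orthogonality; and $\mathcal{G}_0=\mathcal{G}(0)=0$, while $\mathcal{G}_m=\mathcal{G}(N)=\mathcal{A}$ by condition (iii) of Definition \ref{cfho}.

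Next I would prove the part of (ii) asserting that $N_i$ is a minimal extending object for $\mathcal{G}_{i-1}={}^{\perp}(N_i\oplus\cdots\oplus N_m)$, as this step is self-contained and drives the rest. Condition (iii) of Definition \ref{MEO} holds because $N_i\notin{}^{\perp}N_i$, so $N_i\notin\mathcal{G}_{i-1}$. For condition (i), a proper factor $Q$ of $N_i$ (with quotient map $\pi$) satisfies $\mathrm{Hom}(Q,N_k)=0$ for all $k\ge i$: for $k>i$ any $Q\to N_k$ precomposed with the epimorphism $\pi$ yields $N_i\to N_k=0$ and hence vanishes, while for $k=i$ the brick property forces any $Q\to N_i$ to vanish, since otherwise $\pi$ would split. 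Thus $Q\in\mathcal{G}_{i-1}$. For condition (ii), given a non-split $0\to N_i\to X\to T\to 0$ with $T\in\mathcal{G}_{i-1}$, applying $\mathrm{Hom}(-,N_k)$ and using $\mathrm{Hom}(T,N_k)=0$ together with $\mathrm{Hom}(N_i,N_k)=0$ (for $k>i$), respectively the brick property and non-splitness (for $k=i$), gives $\mathrm{Hom}(X,N_k)=0$ for all $k\ge i$, i.e. $X\in\mathcal{G}_{i-1}$. By Theorem \ref{mini} this yields the covering relation $\mathcal{G}_{i-1}\lessdot Filt(\mathcal{G}_{i-1}\cup\{N_i\})$.

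Then I would prove (i) by induction on $i$, the base case being $\mathcal{G}_0=0$. Assuming $\mathcal{G}_{i-1}=Filt(N_1,\dots,N_{i-1})$, the inclusion $Filt(N_1,\dots,N_i)\subseteq\mathcal{G}_i$ is clear, since $\mathcal{G}_i$ is a torsion class containing each $N_j$ with $j\le i$. For the reverse inclusion I would argue by induction on length. Given $0\neq Z\in\mathcal{G}_i$, take its canonical sequence $0\to tZ\to Z\to W\to 0$ for the torsion pair $(\mathcal{G}_{i-1},\mathcal{G}_{i-1}^{\perp})$; then $tZ\in\mathcal{G}_{i-1}=Filt(N_1,\dots,N_{i-1})$ by the outer induction, and $W$, being a quotient of $Z\in\mathcal{G}_i$, lies in $\mathcal{W}_i:=\mathcal{G}_i\cap\mathcal{G}_{i-1}^{\perp}$. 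The crucial point is that a nonzero $W\in\mathcal{W}_i$ admits a monomorphism $N_i\hookrightarrow W$. Indeed $W\in{}^{\perp}(N_{i+1}\oplus\cdots\oplus N_m)$ and, as $N_1,\dots,N_{i-1}\in\mathcal{G}_{i-1}$, also $W\in(N_1\oplus\cdots\oplus N_{i-1})^{\perp}$; so the maximality clause of Definition \ref{cfho} (in the equivalent form displayed after it, with $\mathcal{G}(N)=\mathcal{A}$) applied with $k=i$ forces $\mathrm{Hom}(N_i,W)\neq 0$, since otherwise $W$ would lie in $(N_1\oplus\cdots\oplus N_i)^{\perp}\cap{}^{\perp}(N_{i+1}\oplus\cdots\oplus N_m)=0$. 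Any nonzero $N_i\to W$ must be monic, for otherwise its image would be a proper factor of $N_i$, hence an object of $\mathcal{G}_{i-1}$ mapping nontrivially into $W\in\mathcal{G}_{i-1}^{\perp}$, a contradiction. Peeling off this copy of $N_i$, the quotient $W/N_i$ lies in $\mathcal{G}_i$ and has length strictly smaller than that of $Z$, so by the length induction $W/N_i\in Filt(N_1,\dots,N_i)$; extension-closedness then places $W$, and hence $Z$ as an extension of $W$ by $tZ$, in $Filt(N_1,\dots,N_i)$.

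Finally, (i) identifies $Filt(\mathcal{G}_{i-1}\cup\{N_i\})=Filt(N_1,\dots,N_i)=\mathcal{G}_i$, which completes (ii), and combining the covering relations $\mathcal{G}_{i-1}\lessdot\mathcal{G}_i$ from Theorem \ref{mini} with $\mathcal{G}_0=0$ and $\mathcal{G}_m=\mathcal{A}$ gives the maximal green sequence in (iii). I expect the reverse inclusion in (i) to be the main obstacle: it is the only step that genuinely invokes the maximality clause of the CFHO sequence, and the decisive technical point is upgrading the nonzero map produced by maximality to an actual monomorphism $N_i\hookrightarrow W$, which is what makes the length induction close.
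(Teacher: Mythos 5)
Your proposal is correct, and its skeleton coincides with the paper's: both start from Corollary \ref{cgx} to write $\mathcal{G}_i={}^{\perp}(N_{i+1}\oplus\cdots\oplus N_m)$, verify the three conditions of Definition \ref{MEO} for $N_i$ over $\mathcal{G}_{i-1}$ by the same Hom-vanishing, brick, and non-split-extension arguments, and then invoke Theorem \ref{mini} to obtain the covering relations in (iii). The one place you genuinely diverge is the identification in (i)--(ii): after establishing $N_{j+1}\in ME(\mathcal{G}_j)$, the paper simply asserts ``and thus $\mathcal{G}_{j+1}=Filt(\mathcal{G}_j\cup\{N_{j+1}\})$'' with no further argument, even though Theorem \ref{mini} only yields that $Filt(\mathcal{G}_j\cup\{N_{j+1}\})$ is a cover of $\mathcal{G}_j$ contained in $\mathcal{G}_{j+1}$; the covering property alone does not rule out $Filt(\mathcal{G}_j\cup\{N_{j+1}\})\subsetneq\mathcal{G}_{j+1}$. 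You close exactly this gap: the d\'evissage via the canonical sequence for the torsion pair $(\mathcal{G}_{i-1},\mathcal{G}_{i-1}^{\perp})$, the induction on length, and the appeal to the maximality condition displayed after Definition \ref{cfho} (with $k=i$ and $\mathcal{G}(N)=\mathcal{A}$) to produce a monomorphism $N_i\hookrightarrow W$ together give a complete proof of $\mathcal{G}_i\subseteq Filt(N_0,N_1,\dots,N_i)$, which is precisely what the paper's ``and thus'' needs --- and this is indeed, as you say, the only step beyond Corollary \ref{cgx} where the maximality clause of the CFHO sequence is used. So your version buys a self-contained justification of the step the paper leaves implicit, at the cost of one extra induction, while the paper's version is shorter because it treats the equality as immediate. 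Two small points of hygiene: spell out the trivial case $W=0$ (your phrase ``a nonzero $W$'' suggests you intend it, and in that case $Z\in\mathcal{G}_{i-1}$ is handled by the outer induction), and note that the identity $\mathcal{G}_{i-1}^{\perp}=(N_1\oplus\cdots\oplus N_{i-1})^{\perp}$ that your maximality argument requires follows from $\mathcal{F}(N)=\mathcal{G}(N)^{\perp}$ as recorded before Proposition \ref{gx}.
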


\begin{proof}
By Corollary \ref{cgx}, we have $\mathcal{G}_i = \mathcal{G}(N_0\oplus N_1 \oplus \dots N_{i}) = {}^{\bot}(N_{i+1}\oplus\dots N_m \oplus N_{m+1})$ ,
 where $N_{m+1}=0$. We will prove (i) and (ii) using induction method. It is obvious that $\mathcal{G}_0 = Filt(N_0) = 0$ and $N_1 \in ME(\mathcal{G}_0)$
 since $N_1$ is a simple object.

Suppose that $\mathcal{G}_i = Filt(N_0, N_1, \dots, N_i)$ and $N_i\in ME(\mathcal{G}_{i-1})$ for $1\leq i \leq j$.
 We claim that $N_{j+1}\in ME(\mathcal{G}_j)$. First note that $Hom(\mathcal{G}_j, N_k) = 0$ for $k>j$ since $\mathcal{G}_j^{\bot} = (N_0\oplus \dots \oplus N_j)^{\bot}$.
 In particular, $Hom(\mathcal{G}_j, N_{j+1}) = 0$. Second, suppose that $N$ is a proper quotient of $N_{j+1}$, then
 it is clear that $N \in {}^{\bot}(N_{j+2}\oplus \dots \oplus N_m)$. If $f\in Hom(N, N_{j+1})$ is nonzero, since $N$
 is a quotient of $N_{j+1}$, then $f$ must be an isomorphism, which is a contradiction. Thus $N\in  \mathcal{G}_j =
 {}^{\bot}(N_{j+1}\oplus \dots \oplus N_m)$. Let $0 \rightarrow N_{j+1} \stackrel{a}{\longrightarrow} X \stackrel{b}{\longrightarrow} T \rightarrow 0$
 be a nonsplit exact sequence with $T\in \mathcal{G}_j$. Then it is enough to prove that $Hom(X, N_{j+1}) = 0$.
 Let $f\in Hom(X, N_{j+1})$. If $fa\neq 0$, it is an isomorphism and $f$ is a section, which is a contradiction.
 Then $fa=0$, and thus $f$ can be factor through $b$. Since $Hom(T, N_{j+1})=0$, then $f=0$. Then $X\in \mathcal{G}_j$ and
 $N_{j+1}\in ME(\mathcal{G}_j)$. And thus, $\mathcal{G}_{j+1} = Filt(\mathcal{G}_j \cup \{N_{j+1}\}) = Filt(Filt({N_0, N_1, \dots , N_{j}}) \cup \{N_{j+1}\}) = Filt({N_0, N_1, \dots , N_{j+1}})$. Then by induction,  (i) and (ii) hold.

Clearly, $\mathcal{G}_0=0$ and $\mathcal{G}_m=\mathcal{A}$. By Theorem \ref{mini} and (ii), we have  $\mathcal{G}_i\lessdot \mathcal{G}_{i+1}$ for any $i$. Then (iii) holds.
\end{proof}

\begin{Theorem}\label{m1}
Let $0 = \mathcal{T}_0\, \lessdot\, \mathcal{T}_1\, \lessdot\, \mathcal{T}_2\, \lessdot \,\dots\,
 \lessdot\, \mathcal{T}_m = \mathcal{A} $ be a maximal green sequence in $\mathcal{A}$. Then there
 exists a sequence of bricks $N_1, N_2, \dots , N_m$ such that
 \begin{enumerate}
 \item[(i)] $N_i$ is a minimal extending object of $\mathcal{T}_{i-1}$ and $\mathcal{T}_i =
              Filt(\mathcal{T}_{i-1} \cup \{N_i\})$ for each $1 \leq i \leq m$.
 \item[(ii)] $\mathcal{T}_i = Filt(N_0, N_1, \dots, N_i) = \mathcal{G}(N_0 \oplus N_1\oplus \dots \oplus N_i)
             = {}^{\bot}(N_{i+1}\oplus \dots \oplus N_m \oplus N_{m+1})$ for each $0 \leq i \leq m$,
             where $N_0 = 0 = N_{m+1}$.
 \item[(iii)] The sequence $N_1, N_2, \dots , N_m$ is a complete forward hom-orthogonal sequence in $\mathcal{A}$.
 \item[(iv)] $Filt(N_i) = \mathcal{T}_i \cap \mathcal{F}_{i-1}$ for each $1 \leq i \leq m$.
 \item[(v)]  Up to isomorphism, each object $X$ in $\mathcal{A}$ admits a unique filtration
              \[0=X_0\subset X_1\subset X_2 \subset \dots \subset X_m =X\]
              such that $X_i/X_{i-1} \in Filt(N_i)$ for each $1 \leq i \leq m$.
 \end{enumerate}
\end{Theorem}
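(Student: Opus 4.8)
The plan is to manufacture the bricks directly from the covering relations and then verify the five assertions in a dependency-respecting order, with (iv) serving as the technical hub. For each $1 \le i \le m$ the relation $\mathcal{T}_{i-1} \lessdot \mathcal{T}_i$ together with Theorem \ref{mini} produces a unique indecomposable $N_i$ that is a minimal extending object for $\mathcal{T}_{i-1}$ and satisfies $\mathcal{T}_i = Filt(\mathcal{T}_{i-1} \cup \{N_i\})$, while Proposition \ref{pop} guarantees that $N_i$ is a brick; this is precisely (i). Setting $N_0 = 0$, an easy induction using the idempotency of $Filt$ (that is, $Filt(Filt(\mathcal{S}) \cup \mathcal{S}') = Filt(\mathcal{S} \cup \mathcal{S}')$) upgrades this to the first equality $\mathcal{T}_i = Filt(N_0, \dots, N_i)$ of (ii). Hom-orthogonality is then immediate: for $i < j$ one has $N_i \in \mathcal{T}_i \subseteq \mathcal{T}_{j-1}$, while Definition \ref{MEO}(iii) applied to $N_j$ gives $Hom(\mathcal{T}_{j-1}, N_j) = 0$, whence $Hom(N_i, N_j) = 0$.

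Next I would pin down the torsion-class descriptions. Since $\mathcal{G}(X_i)$ is the smallest torsion class containing $X_i = N_0 \oplus \dots \oplus N_i$, and $\mathcal{T}_i$ is a torsion class containing every $N_j$ with $j \le i$, the extension-closure description $\mathcal{T}_i = Filt(N_0, \dots, N_i)$ forces both inclusions and hence $\mathcal{T}_i = \mathcal{G}(X_i)$; taking $i = m$ yields $\mathcal{G}(N) = \mathcal{A}$. The heart of the argument is (iv), namely $Filt(N_i) = \mathcal{T}_i \cap \mathcal{F}_{i-1}$. The inclusion $\subseteq$ is formal, as $N_i$ lies in $\mathcal{T}_i$ and, being a minimal extending object, in the extension-closed torsion-free class $\mathcal{F}_{i-1}$. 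For $\supseteq$ I would induct on length: given $0 \neq W \in \mathcal{T}_i \cap \mathcal{F}_{i-1}$, membership in $Filt(\mathcal{T}_{i-1} \cup \{N_i\})$ combined with $Hom(\mathcal{T}_{i-1}, W) = 0$ forces the bottom factor of a suitable filtration to be a subobject $U \cong N_i$; the delicate point is that $W/U$ again lies in $\mathcal{F}_{i-1}$, which I would extract from Definition \ref{MEO}(ii), since a non-split extension of an object of $\mathcal{T}_{i-1}$ by $N_i$ would land in $\mathcal{T}_{i-1}$ and contradict $W \in \mathcal{F}_{i-1}$, so the relevant extension splits and its $\mathcal{T}_{i-1}$-summand, being simultaneously torsion and torsion-free, vanishes.

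With (iv) in hand the remaining items fall out. For the maximality clause of the CFHO definition I would use the equivalent formulation recorded after Definition \ref{cfho} and reduce, via $\mathcal{G}(N) = \mathcal{A}$ and $X_k^{\bot} = \mathcal{F}_k$, to showing $\mathcal{F}_k \cap {}^{\bot}(N_{k+1} \oplus \dots \oplus N_m) = 0$: a nonzero $Y$ there would, taking the least $j$ with $Y \in \mathcal{T}_j$ (necessarily $j > k$), have a nonzero torsion-free quotient lying in $\mathcal{T}_j \cap \mathcal{F}_{j-1} = Filt(N_j)$ by (iv), hence surjecting onto $N_j$ and violating $Hom(Y, N_j) = 0$. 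The same vanishing delivers the last equality $\mathcal{T}_i = {}^{\bot}(N_{i+1} \oplus \dots \oplus N_m)$ of (ii) (the $\subseteq$ direction being the hom-orthogonality already established) and, together with hom-orthogonality and $\mathcal{G}(N) = \mathcal{A}$, the full CFHO property (iii). Finally for (v) I would let $X_i$ be the torsion subobject of $X$ for $(\mathcal{T}_i, \mathcal{F}_i)$: monotonicity of torsion radicals yields the chain, each quotient $X_i/X_{i-1}$ lies in $\mathcal{T}_i \cap \mathcal{F}_{i-1} = Filt(N_i)$, and uniqueness follows because any filtration with $X_i'/X_{i-1}' \in Filt(N_i)$ has $X_i' \in \mathcal{T}_i$ and $X/X_i' \in \mathcal{F}_i$, so $X_i'$ is forced to be the torsion subobject by uniqueness of the canonical sequence.

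The main obstacle is the reverse inclusion in (iv): controlling the filtration of an element of $\mathcal{T}_i \cap \mathcal{F}_{i-1}$ so as to strip off a copy of $N_i$ while remaining inside $\mathcal{F}_{i-1}$, which is exactly where the minimal-extending-object axiom Definition \ref{MEO}(ii) and the disjointness $\mathcal{T}_{i-1} \cap \mathcal{F}_{i-1} = 0$ become indispensable. Everything downstream, namely maximality, the ${}^{\bot}$-descriptions, and the unique filtration (v), is then bookkeeping with torsion pairs.
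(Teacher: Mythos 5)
Your proposal is correct and follows essentially the same route as the paper: the bricks come from Theorem \ref{mini} together with Proposition \ref{pop}, the equality $\mathcal{T}_i = Filt(N_0,\dots,N_i)$ by induction, the split/non-split dichotomy from Definition \ref{MEO}(ii) to strip a copy of $N_i$ off an object of $\mathcal{T}_i\cap\mathcal{F}_{i-1}$ in (iv), and canonical sequences for (v). The only noteworthy deviations are organizational and slightly tighten the exposition: you establish (iv) before the maximality part of (iii), which makes fully explicit the paper's assertion there that any $X\in\mathcal{T}_k\setminus\mathcal{T}_{k-1}$ in ${}^{\bot}(N_{i+1}\oplus\dots\oplus N_m)$ admits an epimorphism onto $N_k$, and in (v) you identify the terms of the filtration directly with the torsion subobjects for $(\mathcal{T}_i,\mathcal{F}_i)$, so that both existence and uniqueness follow at once from the uniqueness of canonical sequences rather than from the paper's diagram chase.
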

\begin{proof}
(i) By Theorem \ref{mini}, there exist indecomposable objects $N_1, N_2, \dots, N_m$
such that $N_i$ is a minimal
    extending object of $\mathcal{T}_{i-1}$ and $\mathcal{T}_i = Filt(\mathcal{T}_{i-1} \cup \{N_i\})$
    for each $1 \leq i \leq m$. Then due to Proposition \ref{pop} and by the definition of minimal extending objects,  $N_1, N_2, \dots, N_m$ are bricks.

(ii) It is obvious that $\mathcal{T}_0 = Filt(N_0)$ and $\mathcal{T}_1 = Filt(N_0, N_1)$. Then we can
     prove that $\mathcal{T}_i = Filt(N_0, N_1, \dots, N_i)$ by induction. To prove that $\mathcal{T}_i =
     \mathcal{G}(N_0 \oplus N_1\oplus \dots \oplus N_i)$, it is enough to prove that $\mathcal{F}_i =
     (N_0 \oplus N_1\oplus \dots \oplus N_i)^{\bot}$ for each $0 \leq i \leq m$. Assume that $Y \in
     (N_0 \oplus N_1\oplus \dots \oplus N_i)^{\bot}$, it is clear that $Hom(\mathcal{T}_i, Y) = 0$
     since $\mathcal{T}_i = Filt(N_0, N_1, \dots, N_i)$. Thus $Y \in \mathcal{F}_i$. Conversely, if
     $X\in \mathcal{F}_i$, then we have that $Hom(\mathcal{T}_i, X) = 0$ and hence $X \in
     (N_0 \oplus N_1\oplus \dots \oplus N_i)^{\bot}$. Therefore $\mathcal{T}_i = \mathcal{G}(N_0 \oplus N_1\oplus \dots \oplus N_i)$.

      The statement $\mathcal{T}_i = {}^{\bot}(N_{i+1}\oplus \dots \oplus N_m \oplus N_{m+1})$
      will follow from (iii) by Corollary \ref{cgx}.

(iii) At first, by (i), $N_1, N_2, \dots, N_m$ are bricks.

By the above proof of (ii), if $i\leq j-1$, then $N_i \in \mathcal{T}_{j-1} = Filt(N_0,N_1, \dots, N_{j-1})$ for all $i < j$. And by (i), we have $Hom(\mathcal{T}_{j-1}, N_j) = 0$.  Hence,  $Hom(N_i, N_j) = 0$ for all $i < j$. By (ii), we
      have shown that $\mathcal{G}(N_1\oplus \dots \oplus N_m) = \mathcal{T}_m = \mathcal{A}$. Now it is enough
      to prove that for all $i$, $$(N_0 \oplus N_1\oplus \dots \oplus N_i)^{\bot}  \cap {}^{\bot}(N_{i+1} \oplus \dots  \oplus N_m \oplus N_{m+1})=0.$$
       If $0\not=X\in (N_0 \oplus N_1\oplus \dots \oplus N_i)^{\bot}  \cap {}^{\bot}(N_{i+1} \oplus \dots  \oplus N_m \oplus N_{m+1})$, then $X\in \mathcal{F}_i$ and thus $X\notin \mathcal{T}_i$. Hence there exists $k$
      such that $k > i$ and $X\in T_k\backslash T_{k-1}$. We have that $N_k$ is a factor of $X$, i.e., there
      is an epimorphism $X\rightarrow N_k$. Note that $X\in {}^{\bot}(N_{i+1} \oplus \dots  \oplus N_m \oplus N_{m+1})$
      and $k>i$, therefore $Hom(X, N_k) = 0$, which is a contradiction. Thus
      $(N_0 \oplus N_1\oplus \dots \oplus N_i)^{\bot}  \cap {}^{\bot}(N_{i+1} \oplus \dots  \oplus N_m \oplus N_{m+1})=0$
      and the sequence $N_1, N_2, \dots , N_m$ is a complete forward hom-orthogonal sequence in $\mathcal{A}$. We have also
      proved that  $\mathcal{T}_i = {}^{\bot}(N_{i+1}\oplus \dots \oplus N_m \oplus N_{m+1})$ for all $i$.

(iv) Suppose $X$ is a nonzero object in $\mathcal{T}_i \cap \mathcal{F}_{i-1}$, then $X\in \mathcal{F}_{i-1}
     = (N_0 \oplus N_1\oplus \dots \oplus N_{i-1})^{\bot}$ and thus $N_i$ is a subobject of $X$. Therefore there is
     an exact sequence $0\rightarrow N_i \rightarrow X \rightarrow Y \rightarrow 0$. It is clear that $Y\in \mathcal{T}_i$.
     We claim that $Y\in \mathcal{F}_{i-1} = (N_0 \oplus N_1\oplus \dots \oplus N_{i-1})^{\bot}$. Otherwise assume that
     there is a nonzero morphism $h: N_k \rightarrow Y$ with $k<i$. Then we have the following commutative diagram with
     exact rows.
       \[\xymatrix{
  0  \ar[r]^{} & N_i \ar[d]_{1} \ar[r]^{p} & H \ar[d]_{h} \ar[r]^{t} & N_k \ar[d]_{r} \ar[r]^{} & 0  \\
  0 \ar[r]^{} & N_i \ar[r]^{f} & X \ar[r]^{g} & Y \ar[r]^{} & 0   }\]
     If the first row is nonsplit, then $H\in \mathcal{T}_{i-1}$ since $N_k\in \mathcal{T}_{i-1}$ and $N_k\in ME(\mathcal{T})$.
     Then $Hom(H, X) = 0$ implies $f=0$, which is a contradiction. Thus the first row is split, i.e., there is a
     morphism $s : N_k \rightarrow H$ such that $ts = 1_{N_k}$. Then $r = rts = ghs = 0$, which is a contradiction
     to our assumption. Then we have proved that $Y\in \mathcal{T}_i \cap \mathcal{F}_{i-1}$. By induction on the
     length of $X$, we have that $Y\in Filt(N_i)$, hence $X\in Filt(N_i)$.

     Conversely, since $N_i\in \mathcal{T}_i \cap \mathcal{F}_{i-1}$ by (ii) and $\mathcal{T}_i$ and $\mathcal{F}_{i-1}$ are closed under extensions,  it follows that
     $Filt(N_i) \subset \mathcal{T}_i \cap \mathcal{F}_{i-1}$.  Then, $Filt(N_i) = \mathcal{T}_i \cap \mathcal{F}_{i-1}$.

(v) Suppose that $X$ is a nonzero object in $\mathcal{A}$.
    Let $X_m = X$, and let $0 \rightarrow X_{m-1} \rightarrow X_m \rightarrow X_m/X_{m-1} \rightarrow 0$ be the canonical
    sequence of $X$ with respect to the torsion pair $(\mathcal{T}_{m-1}, \mathcal{F}_{m-1})$. Then we have that
    $X_{m-1}\in \mathcal{T}_{m-1}$ and $X_m/X_{m-1} \in \mathcal{T}_{m} \cap \mathcal{F}_{m-1}$. The existence of
    the filtration for $X$  follows easily. Assume that there are two filtrations
     \[0=X_0\subset X_1\subset X_2 \subset \dots \subset X_m =X,\]
     \[0=X'_0\subset X'_1\subset X'_2 \subset \dots \subset X'_m =X,\]
       satisfying that  $X_i/X_{i-1},\,\,  X'_i/X'_{i-1}\in Filt(N_i)$ for each $1 \leq i \leq m$. Note that $m$ is the length of the maximal green sequence. It is easy to
     see that $X_i,\,\, X'_i \in \mathcal{T}_i$ by (ii).

     Note that $Hom(\mathcal{T}_{i-1}, N_i) = 0$ for each $i$.
     Then $Hom(X_{m-1}, X'_m/X'_{m-1}) = 0$ implies the following commutative diagram
       \[\xymatrix{
  0  \ar[r]^{} & X_{m-1} \ar[d]_{} \ar[r]^{} & X_m \ar[d]_{1} \ar[r]^{} & X_m/X_{m-1} \ar[d]_{} \ar[r]^{} & 0  \\
  0 \ar[r]^{} & X'_{m-1} \ar[r]^{} & X'_m \ar[r]^{} &  X'_m/X'_{m-1} \ar[r]^{}  & 0.   }\]
     It is obvious that $X_{m-1} \cong X'_{m-1}$ and $ X_m/X_{m-1} \cong X'_m/X'_{m-1}$. Similarly, we have that
     $X_{i-1} \cong X'_{i-1}$ and $ X_i/X_{i-1} \cong X'_i/X'_{i-1}$ for all $i$. Then the uniqueness follows.
\end{proof}

From the constructions of  maximal green sequences and complete forward hom-orthogonal sequences resepctively in Theorem \ref{m2} and Theorem \ref{m1}, it is obvious
that there is a bijection between complete forward hom-orthogonal sequences
 and maximal green sequences in an abelian length category.

\begin{Corollary}
 If a maximal green sequence exists in $\mathcal{A}$, then there are only finitely many nonisomorphic simple objects in $\mathcal{A}$.
 \end{Corollary}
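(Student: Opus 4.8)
The plan is to reduce the statement directly to the correspondence with complete forward hom-orthogonal sequences together with Igusa's lemma, so that essentially no new argument is needed. First I would take an arbitrary maximal green sequence
\[0 = \mathcal{T}_0\, \lessdot\, \mathcal{T}_1\, \lessdot\, \dots\, \lessdot\, \mathcal{T}_m = \mathcal{A},\]
which exists by hypothesis and, by the definition of a maximal green sequence, has finite length $m$. Applying Theorem \ref{m1}(iii) to this sequence produces a finite sequence of bricks $N_1, N_2, \dots , N_m$ which is a complete forward hom-orthogonal sequence in $\mathcal{A}$.

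Next I would invoke Lemma \ref{sim}, which guarantees that any complete forward hom-orthogonal sequence contains, up to isomorphism, every simple object of $\mathcal{A}$. Thus every simple object of $\mathcal{A}$ is isomorphic to one of $N_1, \dots , N_m$, so the set of isomorphism classes of simple objects has cardinality at most $m$. This immediately yields the desired finiteness. (One could equally well quote the earlier corollary asserting that the mere existence of a complete forward hom-orthogonal sequence forces only finitely many simple objects, since Theorem \ref{m1}(iii) has just produced such a sequence.)

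There is no real obstacle here: the only hypothesis to check for Theorem \ref{m1} is that the maximal green sequence be finite, and this is built into the definition. All the substantive work has already been done in constructing the brick sequence in Theorem \ref{m1} and in Igusa's Lemma \ref{sim}, so this corollary is a purely formal consequence of the bijection between maximal green sequences and complete forward hom-orthogonal sequences established in this section.
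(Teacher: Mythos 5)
Your argument is correct and is precisely the paper's intended (unwritten) proof: the corollary is stated immediately after Theorem \ref{m1} exactly because part (iii) converts the maximal green sequence into a complete forward hom-orthogonal sequence, whereupon Lemma \ref{sim} (equivalently, the earlier corollary on CFHO sequences) bounds the number of nonisomorphic simple objects by the length $m$. Nothing is missing, and no different route is taken.
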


\begin{Remark}
The relations between complete forward hom-forward orthogonal sequences and maximal green sequences was firstly
 given by Igusa for finite dimensional hereditary algebras \cite{Ig1} or cluster-tilted algebras of finite representation
 type \cite{Ig2}.
 Then they were extended for the category of finitely generated
 modules over an arbitrary finite-dimensional algebra in the appendix of \cite{Kel1} by Demonet.
 Here we show their truth in abelian length categories via a different approach.
\end{Remark}

\section{Condition for maximal green sequences induced by central charges}

\subsection{Maximal green sequences and stability functions}\label{3.1}

Let $\phi: \mathcal{A}^* \rightarrow \mathcal{P}$ be a stability function on $\mathcal{A}$.
Let $p\in\mathcal{P}$, then  $\mathcal{T}_{\geq p}$ and $\mathcal{A}_{\geq p}$ are respectively given by
\[\mathcal{A}_{\geq p} = \{0\} \cup \{M\in \mathcal{A}\,\, |\,\, M\,\, \text{is $\phi$-semistable and}\,\, \phi(M)\geq p \},\]
 \[ \mathcal{T}_{\geq p}\, =\, \{X\,\in\,Obj(\mathcal{A}): \phi(X')\,\geq\, p\,\, \text{for the maximally destabilizing
                          quotient $X'$ of $X$}\} \cup \{0\}.\]
It is obvious that $\mathcal{A}_{\geq r} \subset \mathcal{A}_{\geq s}$ if and only
 if $r \geq s$ , and it is showed in \cite{BST1} that $\mathcal{T}_{\geq p}$ is a torsion class and
  $\mathcal{T}_{\geq p} = Filt(\mathcal{A}_{\geq p})$.

Recall that $\phi$
 is called {\bf discrete at $p\in \mathcal{P}$} if two $\phi$-stable objects $X_1$ and $X_2$  satisfy $\phi(X_1) = \phi(X_2) = p$,
 then $X_1$ is isomorphic to $X_2$. Moreover, we say $\phi$ to be {\bf discrete} if  $\phi$
 is discrete at every $p\in \mathcal{P}$. On the other hand, Brustle, Smith and Treffinger defined
 an equivalence relation on $\mathcal{P}$ by $p\thicksim q$ when $\mathcal{T}_{\geq p} = \mathcal{T}_{\geq q}$.
 We will write $[p]$ for the equivalence class of $p\in \mathcal{P}$. The following important Proposition
 characterises cover relations for torsion classes induced by $\phi$.

\begin{Proposition}[\cite{BST1}]\label{cov}
Let $\phi: \mathcal{A}^* \rightarrow \mathcal{P}$ be a stability function, and let $p, q\in\mathcal{P}$ such that
 $\mathcal{T}_{\geq p} \subsetneq \mathcal{T}_{\geq q}$. Then $\mathcal{T}_{\geq p} \lessdot \mathcal{T}_{\geq q}$ if
 and only if there is no $r\in \mathcal{P}$ such that $\mathcal{T}_{\geq p} \subsetneq  \mathcal{T}_{\geq r} \subsetneq\mathcal{T}_{\geq q}$,
            and $\phi$ is discrete at $q'$ for each $q'\in [q]$.
\end{Proposition}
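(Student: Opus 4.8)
The plan is to prove the two implications separately. Throughout I would use the elementary observation that for a semistable object $Y$ one has $Y\in\mathcal{T}_{\geq r}$ if and only if $\phi(Y)\geq r$, since the maximally destabilizing quotient of a semistable object is the object itself; in particular a stable object $G$ lies in $\mathcal{T}_{\geq r}$ exactly when $\phi(G)\geq r$. Because $\mathcal{T}_{\geq p}\subsetneq\mathcal{T}_{\geq q}$ forces $p>q$, every $q'\in[q]$ satisfies $q'<p$, a fact I will use repeatedly.

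Assume first that $\mathcal{T}_{\geq p}\lessdot\mathcal{T}_{\geq q}$. The nonexistence of an intermediate class $\mathcal{T}_{\geq r}$ is immediate, as any such class would sit strictly between $\mathcal{T}_{\geq p}$ and $\mathcal{T}_{\geq q}$ and contradict the covering relation. For discreteness I would invoke Theorem \ref{mini} for this cover to produce the unique brick $M$ with $\mathcal{T}_{\geq q}=Filt(\mathcal{T}_{\geq p}\cup\{M\})$, and then the one-step instance of the argument proving Theorem \ref{m1}(iv) to get $Filt(M)=\mathcal{T}_{\geq q}\cap\mathcal{F}_{<p}$. Any stable object $X$ with $\phi(X)=q'\in[q]$ lies in this intersection: it is in $\mathcal{F}_{<p}$ since $\phi(X)=q'<p$, and in $\mathcal{T}_{\geq q}=\mathcal{T}_{\geq q'}$ since it is semistable of phase $q'$. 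As $M$ is the unique simple object of the wide subcategory $Filt(M)$, the only brick contained in $Filt(M)$ is $M$ itself; since every stable object is a brick, $X\cong M$. Hence any two stable objects of phase $q'$ are isomorphic, i.e. $\phi$ is discrete at each $q'\in[q]$.

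For the converse, suppose both conditions hold but the covering fails, so there is a torsion class $\mathcal{X}$ with $\mathcal{T}_{\geq p}\subsetneq\mathcal{X}\subsetneq\mathcal{T}_{\geq q}$, and I aim for a contradiction. Choosing $X\in\mathcal{X}\setminus\mathcal{T}_{\geq p}$, its top Harder--Narasimhan factor (Theorem \ref{ruda}) is a semistable quotient of phase in $[q,p)$ lying in $\mathcal{X}$, and one of its stable quotients $G$ again lies in $\mathcal{X}$ with phase $s\in[q,p)$, so $G\notin\mathcal{T}_{\geq p}$. The first condition forces $\mathcal{T}_{\geq s}\in\{\mathcal{T}_{\geq p},\mathcal{T}_{\geq q}\}$, and since $G\in\mathcal{T}_{\geq s}\setminus\mathcal{T}_{\geq p}$ we must have $\mathcal{T}_{\geq s}=\mathcal{T}_{\geq q}$, that is $s\in[q]$. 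The crucial structural step is a phase-coincidence lemma: any two stable objects whose phases lie in $[q]$ have equal phase, because if $s_1<s_2$ both lay in $[q]$ then a stable object of phase $s_1$ would have to belong to $\mathcal{T}_{\geq s_1}=\mathcal{T}_{\geq q}=\mathcal{T}_{\geq s_2}$, contradicting $s_1<s_2$. Thus all stable objects occurring in the gap share a single phase $s_0$, and the second condition (discreteness at $s_0\in[q]$) makes $G$ unique up to isomorphism.

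It then remains to identify $\mathcal{T}_{\geq q}=Filt(\mathcal{T}_{\geq p}\cup\{G\})$: every semistable object of phase in $[q,p)$ possesses a stable subobject of the same phase, which lies in the gap and is therefore isomorphic to $G$, so its phase equals $s_0$ and it belongs to $Filt(G)$; consequently $\mathcal{A}_{\geq q}$ differs from $\mathcal{A}_{\geq p}$ only by semistables of phase $s_0$, and applying $Filt$ gives the identity. Since $\mathcal{T}_{\geq p}\subseteq\mathcal{X}$, $G\in\mathcal{X}$, and $\mathcal{X}$ is closed under extensions and quotients, it follows that $\mathcal{T}_{\geq q}\subseteq\mathcal{X}$, contradicting $\mathcal{X}\subsetneq\mathcal{T}_{\geq q}$. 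I expect the main obstacle to be this converse direction, specifically the phase-coincidence lemma and the reduction $\mathcal{T}_{\geq q}=Filt(\mathcal{T}_{\geq p}\cup\{G\})$, where the two conditions must be combined, together with the bookkeeping needed to justify the one-step use of Theorem \ref{m1}(iv) in the forward direction.
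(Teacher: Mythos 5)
Your argument is correct, but note first that there is no internal proof to compare against: the paper quotes Proposition \ref{cov} from \cite{BST1} without proof, so what you have written is a genuinely new, self-contained derivation from the paper's own toolkit. Your forward direction runs through Theorem \ref{mini} plus a one-step instance of the argument for Theorem \ref{m1}(iv); this is legitimate, since that argument only uses the data of a single cover $\mathcal{T}\lessdot Filt(\mathcal{T}\cup\{M\})$ with $M$ the minimal extending object (an arbitrary $T\in\mathcal{T}$ replaces $N_k$ in the pullback diagram), and it yields $Filt(M)=\mathcal{T}_{\geq q}\cap\mathcal{F}_{<p}$, hence that every stable object of phase in $[q]$ is isomorphic to the brick $M$. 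This in fact reproves, without circularity, the uniqueness part of the paper's Theorem \ref{cor} --- whose own proof invokes Proposition \ref{cov} to get discreteness, so your decision not to cite Theorem \ref{cor} was necessary, and your phase-coincidence lemma is exactly the comparison used inside that proof ($N\in\mathcal{T}_{\geq r}=\mathcal{T}_{\geq r'}$ forces $r\geq r'$). Two points deserve explicit mention rather than assertion. First, ``every stable object is a brick'' should get its one-line seesaw proof (a nonzero non-invertible endomorphism would make its image a proper subobject of phase $<\phi(X)$ and a proper quotient of phase $>\phi(X)$ simultaneously). Second, and more substantively, your step ``possesses a stable subobject of the same phase\dots and it belongs to $Filt(G)$'' is compressed: a single stable subobject isomorphic to $G$ does not by itself place the semistable $Y$ of phase $t\in[q,p)$ in $Filt(G)$. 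The correct move is to take the full stable filtration of $Y$ from the second half of Theorem \ref{ruda}: all its subquotients are stable of phase $t\in[q]$, hence all isomorphic to $G$ by phase-coincidence plus discreteness, giving $Y\in Filt(G)$ directly (alternatively, induct on length after checking that the quotient of a semistable by an equal-phase subobject is again semistable). With that repair, the identification $\mathcal{T}_{\geq q}=Filt(\mathcal{A}_{\geq q})=Filt(\mathcal{T}_{\geq p}\cup\{G\})\subseteq\mathcal{X}$ --- using the paper's stated fact $\mathcal{T}_{\geq r}=Filt(\mathcal{A}_{\geq r})$ --- and the resulting contradiction with $\mathcal{X}\subsetneq\mathcal{T}_{\geq q}$ are sound, and the whole proof stands.
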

The following theorem in \cite{BST1} characterizes the stability function inducing a maximal green sequence.

\begin{Theorem}[\cite{BST1}]
Suppose that $\phi: \mathcal{A}^* \rightarrow \mathcal{P}$ is a stability function such that $\mathcal{P}$
 has no maximal element or the maximal element of $\mathcal{A}$ is not in $\phi(\mathcal{A})$. Then $\phi$
 induces a maximal green sequence if and only if $\phi$ is discrete and $\mathcal{P}/\thicksim$ is finite.
\end{Theorem}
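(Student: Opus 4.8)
The plan is to work throughout with the family $\{\mathcal{T}_{\geq r}\}_{r\in\mathcal{P}}$ and the order-reversing correspondence $[r]\mapsto\mathcal{T}_{\geq r}$, which gives a bijection between $\mathcal{P}/\!\sim$ and the set of distinct torsion classes $\mathcal{T}_{\geq r}$; since $\mathcal{T}_{\geq r}\subseteq\mathcal{T}_{\geq s}\iff r\geq s$, any two of these are comparable, so this set is totally ordered by inclusion and $\mathcal{P}/\!\sim$ is finite exactly when there are finitely many distinct classes. I would first record the two facts I use repeatedly: $\mathcal{T}_{\geq p}=Filt(\mathcal{A}_{\geq p})$, so $\mathcal{T}_{\geq p}\neq 0$ forces a semistable object of phase $\geq p$; and, by the second part of Theorem \ref{ruda}, every nonzero semistable object admits a stable quotient of the same phase.

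For the direction ``$\phi$ induces a maximal green sequence $\Rightarrow$ $\phi$ discrete and $\mathcal{P}/\!\sim$ finite'', finiteness is immediate, since by definition the distinct torsion classes are precisely the $m+1$ terms of the sequence. For discreteness I would list the classes as $0=\mathcal{U}_0\subsetneq\cdots\subsetneq\mathcal{U}_m=\mathcal{A}$ with $\mathcal{U}_k=\mathcal{T}_{\geq s_k}$ and $s_0>\cdots>s_m$, and split into two cases. At a phase $p$ with $[p]=[s_k]$ and $k\geq 1$, the cover $\mathcal{U}_{k-1}\lessdot\mathcal{U}_k$ feeds into Proposition \ref{cov}, with the larger class $\mathcal{U}_k=\mathcal{T}_{\geq s_k}$ in the role of $\mathcal{T}_{\geq q}$, which delivers discreteness at every element of $[s_k]$, in particular at $p$. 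At a phase $p$ with $[p]=[s_0]$ we have $\mathcal{T}_{\geq p}=\mathcal{U}_0=0$, hence $\mathcal{A}_{\geq p}=0$ and there is no stable object of phase $p$, so discreteness at $p$ holds vacuously. Thus $\phi$ is discrete at every point.

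For the converse I assume $\phi$ discrete with $\mathcal{P}/\!\sim$ finite and build the maximal green sequence from the finitely many, totally ordered distinct classes $\mathcal{U}_0\subsetneq\cdots\subsetneq\mathcal{U}_m$. Three things must be checked. First, $\mathcal{U}_m=\mathcal{A}$: every nonzero $X$ lies in $\mathcal{T}_{\geq\phi(X')}$ for its maximally destabilizing quotient $X'$, and that class is contained in the largest one $\mathcal{U}_m$. Second, each consecutive pair is a cover: no intermediate $\mathcal{T}_{\geq r}$ can occur (every such class is one of the $\mathcal{U}_k$, and none lies strictly between consecutive ones), and discreteness of $\phi$ supplies the remaining hypothesis of Proposition \ref{cov}, giving $\mathcal{U}_k\lessdot\mathcal{U}_{k+1}$. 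Third, and this is where the hypothesis on the maximal element of $\mathcal{P}$ enters, $\mathcal{U}_0=0$.

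For this last point I argue by contradiction: if the smallest class $\mathcal{U}_0=\mathcal{T}_{\geq s_0}$ were nonzero, it would contain a semistable object, hence a stable object $S$ whose phase $t$ lies in the top class $[s_0]$, so that $S\in\mathcal{T}_{\geq t}=\mathcal{U}_0$. The hypothesis that $\mathcal{P}$ has no maximal element, or that its maximal element (when it exists) is not attained by $\phi$, guarantees a phase $p>t$; then $S\notin\mathcal{T}_{\geq p}$ while $S\in\mathcal{U}_0$, so $\mathcal{T}_{\geq p}\subsetneq\mathcal{U}_0$, contradicting the minimality of $\mathcal{U}_0$ among the classes in the family. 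I expect this step to be the main obstacle: it is the only place the maximal-element hypothesis is used, and it must be handled carefully so as to cover both alternatives in that hypothesis and to exploit that $t=\phi(S)$ is genuinely attained in $\phi(\mathcal{A})$. Once $\mathcal{U}_0=0$, $\mathcal{U}_m=\mathcal{A}$, and all consecutive covers are established, the chain $0=\mathcal{U}_0\lessdot\cdots\lessdot\mathcal{U}_m=\mathcal{A}$ is, by definition, the maximal green sequence induced by $\phi$.
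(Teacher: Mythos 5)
Your proposal is correct, but note first that the paper itself contains no proof of this statement: it is imported verbatim from \cite{BST1} (with a typo --- ``the maximal element of $\mathcal{A}$'' should read ``the maximal element of $\mathcal{P}$'', which is how you rightly interpreted it), so there is no internal argument to compare against; what you have written is essentially the Br\"ustle--Smith--Treffinger proof reconstructed from the ingredients the paper does record. The steps all check out. The bijection $[r]\mapsto\mathcal{T}_{\geq r}$ between $\mathcal{P}/\thicksim$ and the totally ordered set of distinct classes correctly reduces finiteness of $\mathcal{P}/\thicksim$ to finiteness of the chain. In the forward direction, the ``only if'' half of Proposition \ref{cov} applied to each cover $\mathcal{U}_{k-1}\lessdot\mathcal{U}_k$ does yield discreteness at every point of $[s_k]$ for $k\geq 1$, and your vacuous case is sound: $\mathcal{T}_{\geq p}=Filt(\mathcal{A}_{\geq p})=0$ forces $\mathcal{A}_{\geq p}=0$, so no stable object has phase in $[s_0]$. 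In the converse, you correctly identified that the ``if'' half of Proposition \ref{cov} is the essential input: adjacency of $\mathcal{U}_k$ and $\mathcal{U}_{k+1}$ within the family $\{\mathcal{T}_{\geq r}\}$ only rules out intermediate classes of that special form, and it is precisely the discreteness hypothesis, fed through the proposition, that upgrades this to a genuine cover in the lattice of all torsion classes. Finally, your handling of $\mathcal{U}_0=0$ --- the one place the maximal-element hypothesis is used --- is airtight: a nonzero $X\in\mathcal{U}_0=\mathcal{T}_{\geq s_0}$ has a semistable maximally destabilizing quotient of phase $t'\geq s_0$, which by the second part of Theorem \ref{ruda} admits a stable quotient $S$ with $t=\phi(S)=t'$; since $S$ is its own maximally destabilizing quotient, $S\in\mathcal{T}_{\geq t}\subseteq\mathcal{U}_0$, and since $t\in\phi(\mathcal{A})$ both alternatives of the hypothesis produce some $p>t$ in $\mathcal{P}$, whence $\mathcal{T}_{\geq p}\subseteq\mathcal{T}_{\geq t}$ while $S\notin\mathcal{T}_{\geq p}$, contradicting minimality of $\mathcal{U}_0$ among the classes in the family. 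Together with the easy observation that $\mathcal{U}_m=\mathcal{A}$ (every nonzero $X$ lies in $\mathcal{T}_{\geq\phi(X')}$), this assembles the required sequence $0=\mathcal{U}_0\lessdot\cdots\lessdot\mathcal{U}_m=\mathcal{A}$, so the proof is complete as proposed.
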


We have the following result.

\begin{Theorem}\label{cor}
Let $\phi: \mathcal{A}^* \rightarrow \mathcal{P}$ be a stability function, and let $p, q\in\mathcal{P}$ such that
 $\mathcal{T}_{\geq p} \lessdot \mathcal{T}_{\geq q}$. Then there exists a unique $\phi$-stable object $N$ satisfying that $\phi(N) \in [q]$.
 Moreover $N$ is a minimal extending object for $\mathcal{T}_{\geq p}$. In particular, we have that $q \leq \phi(N) <p$,
 and if $r_1\in\mathcal{P}$ satisfying that $\phi(N)< r_1 \leq p$, then $\mathcal{T}_{\geq r_1} = \mathcal{T}_{\geq p}$.
\end{Theorem}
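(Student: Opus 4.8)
The plan is to realize $N$ as the unique minimal extending object attached to the cover and then show that this brick is automatically $\phi$-stable with phase in $[q]$. First I would invoke Theorem \ref{mini}: since $\mathcal{T}_{\geq p} \lessdot \mathcal{T}_{\geq q}$, there is a unique brick $N$ with $N \in ME(\mathcal{T}_{\geq p})$ and $\mathcal{T}_{\geq q} = Filt(\mathcal{T}_{\geq p} \cup \{N\})$; in particular $N \notin \mathcal{T}_{\geq p}$, $Hom(\mathcal{T}_{\geq p}, N) = 0$, and every proper factor of $N$ lies in $\mathcal{T}_{\geq p}$. The rest of the proof is to identify this $N$ with a stable object.

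Second, I would prove $N$ is $\phi$-semistable by inspecting its Harder--Narasimhan filtration (Theorem \ref{ruda}). If the maximally destabilizing subobject $N''$ had $\phi(N'') \geq p$, then $N''$ would be semistable of phase $\geq p$, hence $N'' \in \mathcal{A}_{\geq p} \subseteq \mathcal{T}_{\geq p}$, and the inclusion $N'' \hookrightarrow N$ would contradict $Hom(\mathcal{T}_{\geq p}, N) = 0$; so every HN factor has phase $<p$. If $N$ were not semistable, its maximally destabilizing quotient $N'$ would be a proper factor, forcing $N' \in \mathcal{T}_{\geq p}$, yet $N'$ is semistable of phase $<p$ and so $N' \notin \mathcal{T}_{\geq p}$, a contradiction. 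Thus $N$ is semistable. Then $N \in \mathcal{T}_{\geq q}$ gives $\phi(N) \geq q$ and $N \notin \mathcal{T}_{\geq p}$ gives $\phi(N) < p$, so $q \leq \phi(N) < p$; and since $\mathcal{T}_{\geq p} \subseteq \mathcal{T}_{\geq \phi(N)} \subseteq \mathcal{T}_{\geq q}$ with $N \in \mathcal{T}_{\geq \phi(N)} \setminus \mathcal{T}_{\geq p}$, the cover relation forces $\mathcal{T}_{\geq \phi(N)} = \mathcal{T}_{\geq q}$, i.e. $\phi(N) \in [q]$.

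Third --- the step I expect to be the main obstacle --- I would upgrade semistability to stability. By Proposition \ref{cov} the cover relation guarantees that $\phi$ is discrete at every element of $[q]$, in particular at $\phi(N)$. By the second part of Theorem \ref{ruda}, $N$ admits a filtration whose subquotients are stable of phase $\phi(N)$, and discreteness forces them all to be isomorphic to a single stable object $S$, so $N \in Filt(S)$. Now I would exploit that $N$ is a brick: in the wide subcategory $Filt(S)$ the object $S$ is the unique simple object, so if $N$ had length $\geq 2$ one could compose a surjection $N \twoheadrightarrow S$ with an inclusion $S \hookrightarrow N$ to obtain a nonzero non-invertible endomorphism of $N$, contradicting that $End(N)$ is a division ring. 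Hence $N \cong S$ is $\phi$-stable. The delicate point is precisely justifying that the brick condition collapses the $Filt(S)$-length to one.

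Finally, for uniqueness let $S'$ be any $\phi$-stable object with $\phi(S') \in [q]$; then $\mathcal{T}_{\geq \phi(S')} = \mathcal{T}_{\geq q} = \mathcal{T}_{\geq \phi(N)}$. Since a semistable object $M$ lies in $\mathcal{T}_{\geq r}$ exactly when $\phi(M) \geq r$, the memberships $N \in \mathcal{T}_{\geq \phi(S')}$ and $S' \in \mathcal{T}_{\geq \phi(N)}$ force $\phi(N) \geq \phi(S')$ and $\phi(S') \geq \phi(N)$, so $\phi(S') = \phi(N)$, and discreteness at this phase gives $S' \cong N$. The last assertion is immediate from the cover: if $\phi(N) < r_1 \leq p$ then $\mathcal{T}_{\geq p} \subseteq \mathcal{T}_{\geq r_1} \subseteq \mathcal{T}_{\geq \phi(N)} = \mathcal{T}_{\geq q}$, so $\mathcal{T}_{\geq r_1}$ equals one of the two ends; it cannot be $\mathcal{T}_{\geq q}$, since that would yield $N \in \mathcal{T}_{\geq r_1}$ and hence the contradiction $\phi(N) \geq r_1 > \phi(N)$, whence $\mathcal{T}_{\geq r_1} = \mathcal{T}_{\geq p}$.
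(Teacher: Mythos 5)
Your proposal is correct, but it runs the argument in the opposite direction from the paper's proof. The paper constructs the stable object first: from $\mathcal{T}_{\geq p}\lessdot\mathcal{T}_{\geq q}$ it deduces $\mathcal{A}_{\geq p}\subsetneq\mathcal{A}_{\geq q}$, picks a semistable $X\in\mathcal{A}_{\geq q}\setminus\mathcal{A}_{\geq p}$, takes its stable quotient $N$ of the same phase $r$, and uses the cover relation to conclude $\mathcal{T}_{\geq r}=\mathcal{T}_{\geq q}$, i.e.\ $\phi(N)\in[q]$; it then proves uniqueness, and only afterwards shows the hardest step --- that every proper factor of $N$ lies in $\mathcal{T}_{\geq p}$ --- by a phase analysis of the factor's maximally destabilizing quotient which itself invokes the already-established uniqueness, finally identifying $N$ with the minimal extending object via Lemma \ref{lem} and Theorem \ref{mini}. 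You instead start from the minimal extending object supplied by Theorem \ref{mini}, so the ``minimal extending'' clause is free, and the work shifts to proving that this brick is $\phi$-semistable (your HN argument combining $Hom(\mathcal{T}_{\geq p},N)=0$ with proper factors lying in $\mathcal{T}_{\geq p}$ is sound) and then $\phi$-stable via discreteness from Proposition \ref{cov}. The step you flagged as delicate can in fact be shortened so as to avoid the $Filt(S)$/brick bookkeeping entirely: the stable quotient $S$ of $N$ with $\phi(S)=\phi(N)<p$ would, if proper, be a proper factor of $N$ and hence lie in $\mathcal{T}_{\geq p}$, while semistability of phase $<p$ puts it outside $\mathcal{T}_{\geq p}$; so $N\cong S$ directly. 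Your treatment of the final clause is also more elementary than the paper's: where the paper re-applies its existence-plus-uniqueness statement to the would-be cover $\mathcal{T}_{\geq p}\lessdot\mathcal{T}_{\geq r_1}$, you derive the contradiction $\phi(N)\geq r_1>\phi(N)$ in one line. What the paper's route buys is an explicit construction of $N$ out of the stability data itself; what yours buys is that the paper's most intricate verification (proper factors of the constructed stable object landing in $\mathcal{T}_{\geq p}$) is absorbed into the definition of a minimal extending object, at the cost of leaning on the torsion-theoretic machinery of Theorem \ref{mini} from the outset.
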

\begin{proof}
Since $\mathcal{T}_{\geq p} \lessdot \mathcal{T}_{\geq q}$, we have $p > q$ and $\mathcal{A}_{\geq p} \subsetneq \mathcal{A}_{\geq q}$.
 Let $X\in \mathcal{A}_{\geq q}\backslash \mathcal{A}_{\geq p}$ with phase $\phi(X) = r$.
 Note that $X$ admits a quotient $N$ satisfying that $N$ is stable and $\phi(X) = \phi(N) = r$.
 It is obvious that $q \leq r < p$ and hence $\mathcal{T}_{\geq p} \subset \mathcal{T}_{\geq r} \subset \mathcal{T}_{\geq q}$.
 Since $X$ is semistable, the maximal destabilizing quotient of $X$ is itself. Then $X\in \mathcal{T}_{\geq r}$ and
 $X\notin \mathcal{T}_{\geq p}$. Therefore $\mathcal{T}_{\geq p} \subsetneq \mathcal{T}_{\geq r}$ and hence
 $\mathcal{T}_{\geq r} = \mathcal{T}_{\geq q}$, which implies $\phi(N)=r\in [q]$.

Suppose there are two stable objects $N$ and $N'$  with phase $\phi(N)=r$ and $\phi(N')=r'$ satisfying that $r, r' \in [q]$.
 If $r=r'$, then $N \cong N'$ since $\phi$ is discrete at $r\in [q]$ by Proposition \ref{cov}. Otherwise, we may assume that
 $r < r'$. Then we have that $\mathcal{T}_{\geq q} = \mathcal{T}_{\geq r} = \mathcal{T}_{\geq r'}$ and thus
 $N\in \mathcal{T}_{\geq r} = \mathcal{T}_{\geq r'}$. By the definition of $\mathcal{T}_{\geq r'}$, we have
 that $r=\phi(N) \geq r'$, which is a contradiction. The uniqueness follows.

We shall prove that every proper factor of $N$ is in $\mathcal{T}_{\geq p}$. Indeed, let $N'$ be a nontrivial factor of $N$,
and let $N''$ be the maximal destabilizing quotient of $N'$ and $N'''$ be the stable quotient of $N''$ with phase $\phi(N''') = \phi(N'') = s$.
Note that $N'''\in \mathcal{T}_{\geq q} = \mathcal{T}_{\geq r}$ implies that $s \geq r$. On the other hand, if $s = r$, then $N \cong N'$, which is
a contradiction. Therefore we have  $s > r$.
If $s \geq p$, then $N' \in \mathcal{T}_{\geq p}$.
If $s < p$, we claim that $\mathcal{T}_{\geq s} = \mathcal{T}_{\geq p}$, and then $N' \in \mathcal{T}_{\geq p}$ also follows.
Indeed, since $r< s < p$, we have that $\mathcal{T}_{\geq p} \subset \mathcal{T}_{\geq s} \subset \mathcal{T}_{\geq r}$.
If $\mathcal{T}_{\geq p} \subsetneq \mathcal{T}_{\geq s}$, we have $\mathcal{T}_{\geq s} = \mathcal{T}_{\geq r}$, which implies
that $s\in [p]$. This contradicts to the uniqueness of $N$. As a consequence, every proper factor of $N$ is in $\mathcal{T}_{\geq p}$.
It is easy to see $N$ is a minimal extending object for $\mathcal{T}_{\geq p}$ by Lemma \ref{lem} and Theorem \ref{mini}.

In particular, if $r_1\in\mathcal{P}$ satisfying that $\phi(N)= r < r_1 \leq p$,
 then $\mathcal{T}_{\geq p} \subset \mathcal{T}_{\geq r_1} \subset \mathcal{T}_{\geq q}$.
 If   $\mathcal{T}_{\geq r_1} = \mathcal{T}_{\geq q}$, then $\mathcal{T}_{\geq p} \lessdot \mathcal{T}_{\geq r_1}$.
 Thus there exists a stable object $M$ satisfying that $\phi(M) \in [r_1]$, and in particular $r < r_1 \leq \phi(M) < p$.
 This contradicts to the uniqueness of $N$. Hence $\mathcal{T}_{\geq r_1} = \mathcal{T}_{\geq p}$.
\end{proof}

For the stability function  $\phi$, suppose that there exist $r_0 > r_1 > \dots > r_m$ in $\mathcal{P}$ such that
\[0 = \mathcal{T}_{\geq r_0} \lessdot \mathcal{T}_{\geq r_1} \lessdot \dots \lessdot \mathcal{T}_{\geq r_m} = \mathcal{A}\]
forms a maximal green sequence.
Assume that $N_i$ is the minimal extending object of $\mathcal{T}_{\geq r_{i-1}}$ such that
$\mathcal{T}_{\geq r_i} = Filt(\mathcal{T}_{\geq r_{i-1}}\cup \{N_i\})$. By Theorem \ref{cor}, we know that $N_i$ is stable,
and we may, without loss of generality, assume that $\phi(N_i) = r_i$.
Recall that for $p\in \mathcal{P}$, the full subcategory $\mathcal{A}_p$ is given by
\[\mathcal{A}_p = \{0\} \cup \{M\in \mathcal{A}\,\, |\,\, M\,\, \text{is semistable and}\,\,\phi(M)=p \}.\]
It is shown in \cite{BST1} that $\mathcal{A}_p$ is a wide subcategory for each $p\in \mathcal{P}$. In particular, we have the following
result.
\begin{Proposition}
With the assumptions and notations above, we have that $\mathcal{A}_{r_i} = Filt(N_i)$ for each $i$ with $1 \leq i \leq m$.
\end{Proposition}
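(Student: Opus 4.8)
The plan is to prove the set equality $\mathcal{A}_{r_i} = Filt(N_i)$ by establishing the two inclusions separately, using the fact (recalled from \cite{BST1}) that $\mathcal{A}_{r_i}$ is a wide subcategory, together with Rudakov's Harder--Narasimhan property (Theorem \ref{ruda}) and the uniqueness statement in Theorem \ref{cor}.

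For the inclusion $Filt(N_i) \subseteq \mathcal{A}_{r_i}$, I would first observe that $N_i$ itself lies in $\mathcal{A}_{r_i}$: by the discussion preceding the statement, $N_i$ is $\phi$-stable with $\phi(N_i) = r_i$, hence in particular $\phi$-semistable of phase $r_i$. Since $\mathcal{A}_{r_i}$ is a wide subcategory, it is an abelian subcategory closed under extensions; as every object of $Filt(N_i)$ is built by iterated extensions of copies of $N_i$, a straightforward induction on the length of the filtration gives $Filt(N_i) \subseteq \mathcal{A}_{r_i}$.

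For the reverse inclusion $\mathcal{A}_{r_i} \subseteq Filt(N_i)$, take a nonzero $M \in \mathcal{A}_{r_i}$, so that $M$ is semistable with $\phi(M) = r_i$. By the second part of Theorem \ref{ruda}, $M$ admits a filtration $0 = Y_0 \subsetneq Y_1 \subsetneq \dots \subsetneq Y_k = M$ whose successive quotients $G_j = Y_j/Y_{j-1}$ are all $\phi$-stable of phase $\phi(G_j) = \phi(M) = r_i$. Each $G_j$ is then a stable object whose phase lies in $[r_i]$, so by the uniqueness asserted in Theorem \ref{cor}, applied to the cover $\mathcal{T}_{\geq r_{i-1}} \lessdot \mathcal{T}_{\geq r_i}$, we obtain $G_j \cong N_i$ for every $j$. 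Hence the filtration exhibits $M$ as an object of $Filt(N_i)$, which gives $\mathcal{A}_{r_i} \subseteq Filt(N_i)$. Combining the two inclusions yields the claim.

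The main point to get right is the uniqueness step: one must ensure that \emph{every} stable object of phase $r_i$ — not merely $N_i$ — is isomorphic to $N_i$. This is exactly where the cover relation $\mathcal{T}_{\geq r_{i-1}} \lessdot \mathcal{T}_{\geq r_i}$ is used, since by Proposition \ref{cov} it forces $\phi$ to be discrete at every $q' \in [r_i]$, and thereby, via Theorem \ref{cor}, singles out $N_i$ as the unique stable object with phase in $[r_i]$. Without this discreteness the successive quotients $G_j$ could fail to be mutually isomorphic, and $Filt(N_i)$ would be strictly smaller than $\mathcal{A}_{r_i}$; so the cover hypothesis is essential and not merely cosmetic.
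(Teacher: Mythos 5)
Your proof is correct, and the first inclusion $Filt(N_i)\subseteq \mathcal{A}_{r_i}$ is word-for-word the paper's argument (stability of $N_i$ plus closure of the wide subcategory under extensions). For the reverse inclusion you take a slightly different route: you invoke the full second half of Theorem \ref{ruda} to get, in one step, a filtration of $M$ with \emph{all} stable subquotients of phase $r_i$, and then identify every subquotient with $N_i$ via the uniqueness statement of Theorem \ref{cor} applied to the cover $\mathcal{T}_{\geq r_{i-1}}\lessdot \mathcal{T}_{\geq r_i}$. The paper instead peels off a single stable quotient $N'$ of $M$, uses discreteness of $\phi$ at $r_i$ (Proposition \ref{cov}) to get $N'\cong N_i$, then exploits that $\mathcal{A}_{r_i}$ is wide --- specifically, closed under kernels --- to conclude that the kernel $L$ of $M\twoheadrightarrow N_i$ again lies in $\mathcal{A}_{r_i}$, and finishes by induction on $l(M)$. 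Your version buys a shorter argument that never needs kernel-closure of $\mathcal{A}_{r_i}$, essentially because Rudakov's theorem has already packaged that induction; the paper's version uses only the weaker consequence of Theorem \ref{ruda} (existence of one stable quotient) and makes the inductive mechanism explicit. Two small remarks: your appeal to Theorem \ref{cor} is stronger than strictly necessary, since the quotients $G_j$ have phase \emph{exactly} $r_i$, so discreteness at $r_i$ alone (which the cover relation forces via Proposition \ref{cov}) already gives $G_j\cong N_i$; and your closing observation that the cover hypothesis is essential is accurate --- without discreteness the stable subquotients need not be pairwise isomorphic and $\mathcal{A}_{r_i}$ could strictly contain $Filt(N_i)$.
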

\begin{proof}
Note that $N_i$ is stable with phase $r_i$, then $N_i \in \mathcal{A}_p$.
Since $\mathcal{A}_p$ is a wide subcategory, it is closed under extensions.
Then we have that  $Filt(N_i) \subset \mathcal{A}_p$. On the other hand, if $N\in \mathcal{A}_p$ is nonzero,
then $N$ admits a stable factor $N'$ with phase $\phi(N') = \phi(N) =r_i$. Since $\phi$ is discrete at $r_i$,
then $N' \cong N_i$. Therefore we have a short exact sequence $0\rightarrow L \rightarrow N \rightarrow N_i \rightarrow 0$.
Since $N, N_i\in \mathcal{A}_p$ and $\mathcal{A}_p$ is closed under kernels, then $L\in \mathcal{A}_p$. Note that
$l(L)<l(N)$, we may prove that $N\in Filt(N_i)$ by induction on length of $N$. Thus $\mathcal{A}_{r_i} = Filt(N_i)$.
\end{proof}

Note that $\phi(N_1) > \phi(N_2) > \dots > \phi(N_m)$, and for any $X \in Filt(N_i)$, $X$ is semistable
 with phase $r_i$. Then for each nonzero object in $\mathcal{A}$, the filtration induced by $\phi$
 (see Theorem \ref{ruda}) is the same as that induced by the maximal green sequence
 (see Theorem \ref{m1}(v))  by the uniqueness of the filtration.

\begin{Corollary}
With the assumptions and notations above, we have that
\begin{enumerate}
 \item[(i)]  The set of $\phi$-semistable objects in $\mathcal A$  is equal to $\bigcup_{i=1}^{m}\mathcal A_{r_i}\setminus \{0\}$;
  \item[(ii)]  The set $\{N_1, N_2, \dots, N_m\}$
 is a complete set of nonisomorphic  $\phi$-stable objects in $\mathcal{A}$ up to isomorphism.
 \end{enumerate}
\end{Corollary}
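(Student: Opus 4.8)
The plan is to derive both parts from Theorem~\ref{cor}, the Proposition immediately preceding the Corollary (which identifies $\mathcal{A}_{r_i}$ with $Filt(N_i)$), and the Harder--Narasimhan property of Theorem~\ref{ruda}. The conceptual heart of the argument is that the phase of every semistable object---and in particular of every stable object---must coincide with one of the finitely many values $r_1,\dots,r_m$. The first auxiliary step I would record is a ``chain exhaustion'' observation: for every $p\in\mathcal{P}$ there is an index $0\le j\le m$ with $\mathcal{T}_{\geq p}=\mathcal{T}_{\geq r_j}$. Indeed, the family $\{\mathcal{T}_{\geq r}\}_{r\in\mathcal{P}}$ is totally ordered by inclusion, since $\mathcal{T}_{\geq r}\subset\mathcal{T}_{\geq s}$ holds exactly when $r\geq s$; hence $\mathcal{T}_{\geq p}$ is comparable with each $\mathcal{T}_{\geq r_i}$. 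Taking the largest $j$ with $\mathcal{T}_{\geq r_j}\subseteq\mathcal{T}_{\geq p}$ and invoking the cover relation $\mathcal{T}_{\geq r_j}\lessdot\mathcal{T}_{\geq r_{j+1}}$ forces $\mathcal{T}_{\geq p}=\mathcal{T}_{\geq r_j}$.

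For part (ii) I would argue as follows. The objects $N_1,\dots,N_m$ are pairwise non-isomorphic, since $\phi(N_i)=r_i$ and the $r_i$ are distinct, and each $N_i$ is $\phi$-stable by Theorem~\ref{cor}. To prove completeness, let $N$ be an arbitrary $\phi$-stable object and set $p=\phi(N)$. Since $N\neq 0$ lies in $\mathcal{T}_{\geq p}$, the exhaustion step gives $\mathcal{T}_{\geq p}=\mathcal{T}_{\geq r_j}$ for some $j\geq 1$, so that $p\in[r_j]$. As $N$ is a stable object whose phase lies in $[r_j]$, the uniqueness clause of Theorem~\ref{cor} yields $N\cong N_j$.

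Part (i) then follows quickly. Every nonzero object of $\mathcal{A}_{r_i}$ is semistable by the definition of $\mathcal{A}_{r_i}$, so $\bigcup_{i=1}^m \mathcal{A}_{r_i}\setminus\{0\}$ consists of semistable objects. For the reverse inclusion, let $M$ be nonzero and semistable with $p=\phi(M)$; by the second part of Theorem~\ref{ruda}, $M$ admits a $\phi$-stable quotient $N'$ with $\phi(N')=p$. By part (ii) we have $N'\cong N_j$ for some $j$, whence $p=\phi(N_j)=r_j$ and therefore $M\in\mathcal{A}_{r_j}\setminus\{0\}$.

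The step I expect to require the most care is not any single computation but the passage from the equivalence class to the actual phase value: the torsion-class chain only determines $[p]$, whereas $\mathcal{A}_p$ depends on the precise number $p$. This gap is closed exactly by combining the existence of a stable subquotient of the same phase (Theorem~\ref{ruda}) with the uniqueness of the stable object in each class $[r_j]$ (Theorem~\ref{cor}); without this uniqueness one could not rule out a semistable object whose phase is equivalent to, but numerically different from, some $r_j$. Everything else reduces to the order-theoretic bookkeeping of the chain exhaustion step, which is routine once the covering relations of the maximal green sequence are used.
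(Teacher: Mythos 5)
Your proof is correct, but it takes a genuinely different route from the paper's. The paper leans on the remark immediately preceding the Corollary: by uniqueness, the Harder--Narasimhan filtration of any nonzero object coincides with the filtration coming from the maximal green sequence (Theorem \ref{m1}(v)); since a semistable object $M$ has trivial HN filtration $0 \subsetneq M$, this forces $M \in Filt(N_i) = \mathcal{A}_{r_i}$ for a single $i$ (via the Proposition identifying $\mathcal{A}_{r_i}$ with $Filt(N_i)$), which proves (i) first, and then (ii) follows from discreteness of $\phi$ at each $r_i$. You instead prove (ii) first: your order-theoretic exhaustion lemma (every $\mathcal{T}_{\geq p}$ equals some $\mathcal{T}_{\geq r_j}$, using the total ordering of the chain and the cover relations --- note you should also dispose of the boundary case $j=m$, where $\mathcal{T}_{\geq r_m}=\mathcal{A}$ gives the conclusion without any cover) shows that the phase of any stable object lies in some class $[r_j]$, and the uniqueness clause of Theorem \ref{cor} then pins it down to $N_j$; part (i) follows from Theorem \ref{ruda}'s stable quotient of equal phase. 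Your route avoids both the filtration-comparison remark and the Proposition $\mathcal{A}_{r_i}=Filt(N_i)$ entirely, at the cost of the extra exhaustion step; the paper's route is shorter given the machinery already assembled, since the filtration coincidence makes membership in a single layer immediate. You also correctly isolate and close the one real subtlety --- that the torsion-class chain only controls phases up to the equivalence $\thicksim$, whereas $\mathcal{A}_{r_i}$ requires the exact value $r_i$ --- which is precisely where Theorem \ref{cor}'s uniqueness (in your argument) or discreteness (in the paper's) must enter.
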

\begin{proof}
 It is enough to show that if $M$ is an arbitrary
 semistable object in $\mathcal{A}$, then  $M\in Filt(N_i)$ for some $1\leq i \leq m$, and thus
 $\phi(M)\in \{r_1, r_2, \dots ,r_m\}$ for some $i$. Since the filtration $0 \subsetneq M$ of $M$ induced by the stability function $\phi$ is the same
 as the one induced by the maximal green sequence, then it is obvious that $M\in Filt(N_i)$ for some $1\leq i \leq m$,
  and thus $\phi(M)\in \{r_1, r_2, \dots ,r_m\}$ for some $i$. In particular, since $\phi$ is discrete
  at each $r_i$, then it is easy to see the set $\{N_1, N_2, \dots, N_m\}$
 is a complete set of nonisomorphic  $\phi$-stable objects in $\mathcal{A}$.
\end{proof}

\subsection{Maximal green sequences induced by central charges}\label{3.2}
Let  $\mathcal{A}$ be an abelian length category. If there is a maximal green sequence in $\mathcal{A}$,
then $\mathcal{A}$ admits finitely many simple objects up to isomorphism. Assume that $S_1, S_2, \dots, S_n$
be the complete set of the isomorphism classes of simple objects in $\mathcal{A}$. Then we know that the Grothendieck
group $K_0(\mathcal{A})$ is isomorphic to $\mathbb{Z}^n$. We may write $[X]\in \mathbb{Z}^n$ for the image of $X\in\mathcal{A}$.
 Note that for $\theta\in \mathbb{R}^n$ and $X\in obj(\mathcal{A})$, we also denote by $\langle\theta,X\rangle$ the inner product $\langle\theta, [X]\rangle$ for simplicity.

\begin{Definition}\label{as}
 Let $\mathcal{T}: 0 = \mathcal{T}_0\, \lessdot\, \mathcal{T}_1\, \lessdot\, \mathcal{T}_2\, \lessdot \,\dots\, \lessdot\, \mathcal{T}_m = \mathcal{A} $
 be a maximal green sequence in the abelian length category $\mathcal{A}$, and  $N_1, N_2, \dots, N_m$ be the corresponding complete
 hom-orthogonal sequence. If there exits vectors $\alpha\in \mathbb{R}^n$ and $\beta\in \mathbb{R}_{>0}^n$ such that
 $$\langle\alpha,N_i\rangle \langle\beta,N_{i+1}\rangle <  \langle\alpha,N_{i+1}\rangle \langle\beta,N_i\rangle$$ that is,
 $\frac{\langle\alpha,N_i\rangle}{\langle\beta,N_i\rangle} < \frac{\langle\alpha,N_{i+1}\rangle} {\langle\beta,N_{i+1}\rangle}$  for all $i$, then  the maximal green sequence $\mathcal{T}$ is said to satisfy {\bf crossing inequalities.}
\end{Definition}

\begin{Theorem}\label{main}
 A maximal green sequence  $\mathcal{T}: 0 = \mathcal{T}_0\, \lessdot\, \mathcal{T}_1\, \lessdot\, \mathcal{T}_2\, \lessdot \,\dots\, \lessdot\, \mathcal{T}_m = \mathcal{A} $ in an abelian length category $\mathcal{A}$ can be induced by some central charge $Z : K_0(\mathcal{A}) \rightarrow \mathbb{C}$ if and only if
 $\mathcal{T}$ satisfies crossing inequalities.
\end{Theorem}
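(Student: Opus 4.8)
The plan is to reduce both implications to one elementary observation about the central charge. Given $\alpha\in\mathbb{R}^n$ and $\beta\in\mathbb{R}_{>0}^n$, set $Z(X)=\langle\alpha,[X]\rangle+\mathrm{i}\langle\beta,[X]\rangle$. For every nonzero $X$ the point $Z(X)$ lies in the open upper half plane, and since $\mathrm{Im}\,Z(X)=\langle\beta,[X]\rangle>0$ we have
$$\cot\bigl(\pi\phi_Z(X)\bigr)=\frac{\mathrm{Re}\,Z(X)}{\mathrm{Im}\,Z(X)}=\frac{\langle\alpha,X\rangle}{\langle\beta,X\rangle},\qquad \pi\phi_Z(X)=\arg Z(X)\in(0,\pi).$$
As $\cot$ is strictly decreasing on $(0,\pi)$, the crossing inequalities $\langle\alpha,N_i\rangle\langle\beta,N_{i+1}\rangle<\langle\alpha,N_{i+1}\rangle\langle\beta,N_i\rangle$ hold for all $i$ if and only if $\phi_Z(N_1)>\phi_Z(N_2)>\dots>\phi_Z(N_m)$. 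Thus the theorem reduces to matching the strict decrease of the phases $\phi_Z(N_i)$ along the CFHO sequence with the statement that $Z$ induces $\mathcal{T}$.

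For the forward implication I would assume $\mathcal{T}$ is induced by such a $Z$, so $\phi_Z$ induces a chain $0=\mathcal{T}_{\geq r_0}\lessdot\dots\lessdot\mathcal{T}_{\geq r_m}=\mathcal{A}$ with $r_0>r_1>\dots>r_m$. By Theorem \ref{cor} the minimal extending object realizing $\mathcal{T}_{\geq r_{i-1}}\lessdot\mathcal{T}_{\geq r_i}$ is $\phi_Z$-stable and may be normalized to satisfy $\phi_Z(N_i)=r_i$; by the uniqueness of minimal extending objects (Theorem \ref{mini}) these coincide with the CFHO sequence attached to $\mathcal{T}$ in Definition \ref{as}. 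Hence $\phi_Z(N_1)>\dots>\phi_Z(N_m)$, and the opening observation delivers the crossing inequalities for the same $\alpha,\beta$.

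The substantial direction is the converse. Given $\alpha,\beta$ realizing the crossing inequalities, I would define $Z$ as above (a central charge since $\beta>0$) and put $r_i:=\phi_Z(N_i)$, so that $r_1>\dots>r_m$, and then prove $\phi_Z$ induces exactly $\mathcal{T}$. The crux is that each $N_i$ is $\phi_Z$-stable, which I would establish by induction on $i$; the base case holds because $N_1$ is simple (Lemma \ref{sim}). For the step, Theorem \ref{m1} gives $\mathcal{T}_{i-1}=Filt(N_1,\dots,N_{i-1})$ with $N_i$ a minimal extending object, so any proper factor $N_i/L$ of $N_i$ lies in $\mathcal{T}_{i-1}$. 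Assuming $N_1,\dots,N_{i-1}$ stable of phases $r_1>\dots>r_{i-1}$, each $Filt(N_k)$ with $k<i$ sits inside the wide subcategory $\mathcal{A}_{r_k}$ of semistable objects of phase $r_k$; since the torsion class $\mathcal{T}_{\geq r_{i-1}}$ induced by $\phi_Z$ is extension-closed and contains every such $\mathcal{A}_{r_k}$, it contains $\mathcal{T}_{i-1}$. Therefore $\phi_Z(N_i/L)\geq r_{i-1}>r_i$, and the see-saw property applied to $0\to L\to N_i\to N_i/L\to 0$ forces $\phi_Z(L)<r_i=\phi_Z(N_i)$ for every proper nonzero subobject $L$; as $N_i$ is a brick, this is exactly $\phi_Z$-stability.

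Once stability is known, $Filt(N_i)\subseteq\mathcal{A}_{r_i}$, so the unique filtration of Theorem \ref{m1}(v), whose nonzero subquotients lie in the $Filt(N_j)$ and whose phases strictly decrease, is the Harder--Narasimhan filtration of Theorem \ref{ruda} by uniqueness. Reading off its minimal phase then identifies $\mathcal{T}_{\geq p}^{\phi_Z}$ with $\mathcal{T}_i$ for all $p\in(r_{i+1},r_i]$ and shows that no further torsion classes are induced, so $\phi_Z$ induces $\mathcal{T}$. I expect the inductive stability step to be the main obstacle: it is where the torsion-theoretic description of $N_i$ as a minimal extending object must be combined with the additivity of $Z$ and the wide-subcategory structure of the $\mathcal{A}_{r_k}$ to bound the phases of all proper factors at once, and where one must rule out spurious torsion classes arising strictly between consecutive values $r_{i+1}<r_i$.
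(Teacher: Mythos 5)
Your proposal is correct, and while its outer skeleton coincides with the paper's (both directions reduce to the cotangent ordering $\phi_Z(N_1)>\phi_Z(N_2)>\dots>\phi_Z(N_m)$; the ``induced $\Rightarrow$ crossing inequalities'' direction is handled identically via Theorem \ref{cor}), your argument for the crucial stability step in the converse is genuinely different. The paper proves directly that every nonzero $X\in Filt(N_i)$ is $\phi_Z$-semistable: for a subobject $L\subset X$ it expands $[L]=\sum_{j\geq i}l_j[N_j]$ using the unique filtration of Theorem \ref{m1}(v) and the fact that $L\in\mathcal{F}_{i-1}$, then verifies $\mathrm{cot}\,L\geq\mathrm{cot}\,X$ by an explicit mediant (weighted-average) inequality on $\langle\alpha,-\rangle/\langle\beta,-\rangle$, with strictness when $X=N_i$ yielding stability. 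You instead prove stability of the $N_i$ by induction, combining the minimal-extending-object property (proper factors of $N_i$ lie in $\mathcal{T}_{i-1}$), the containment $\mathcal{T}_{i-1}=Filt(N_1,\dots,N_{i-1})\subseteq\mathcal{T}_{\geq r_{i-1}}$, and the see-saw property, after which semistability of all of $Filt(N_i)$ comes for free from extension-closure of $\mathcal{A}_{r_i}$. Your route is softer and slightly more general: it uses the linearity of $Z$ only to get the phase ordering, so the same argument shows that \emph{any} stability function with strictly decreasing phases on the brick sequence induces $\mathcal{T}$, whereas the paper's computation is more self-contained, needing only linear algebra in $K_0(\mathcal{A})$. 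The price of your version is the imported machinery from \cite{BST1} (that $\mathcal{T}_{\geq p}=Filt(\mathcal{A}_{\geq p})$ is a torsion class and that $\mathcal{A}_p$ is extension-closed) together with one tacit lemma you should spell out: passing from $N_i/L\in\mathcal{T}_{\geq r_{i-1}}$ to $\phi_Z(N_i/L)\geq r_{i-1}$ requires knowing that a nonzero object of $\mathcal{T}_{\geq p}$ has phase at least $p$ even when it is not semistable, which follows from an iterated see-saw argument along a $Filt(\mathcal{A}_{\geq p})$-filtration. Two cosmetic points: brickness of $N_i$ is not an ingredient of the stability conclusion (the strict subobject phase bound is literally the definition of $\phi_Z$-stable), and your induction hypothesis only needs semistability of $N_1,\dots,N_{i-1}$ at the given phases. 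Your endgame --- identifying the filtration of Theorem \ref{m1}(v) with the Harder--Narasimhan filtration of Theorem \ref{ruda} by uniqueness and reading off $\mathcal{T}_{\geq p}=\mathcal{T}_i$ for $p\in(r_{i+1},r_i]$, with $\mathcal{T}_{\geq p}=0$ for $p>r_1$ --- matches the paper's conclusion step.
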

\begin{proof}
On one hand, suppose that $\mathcal{T}$ satisfies crossing inequalities.
Let $\alpha\in \mathbb{R}^n$ and $\beta\in \mathbb{R}_{>0}^n$ such that
  $\langle\alpha,N_i\rangle \langle\beta,N_{i+1}\rangle <  \langle\alpha,N_{i+1}\rangle \langle\beta,N_i\rangle$ for all $i$.
 We define a central charge \[Z : \mathcal{A} \rightarrow \mathbb{C}\] which is given by
 $Z(X) = \langle\alpha, X\rangle + \mathrm{i}\cdot \langle\beta, X\rangle$,
 where $\mathrm{i} = \sqrt{-1}$ and $\langle\cdot \,, \cdot\rangle$ is the canonical inner product on $\mathbb{R}^n$.
 Since $\langle\beta, X\rangle >0$ for any nonzero object $X$, the complex number $Z(X)$
 is in the strict upper half-space. Then we define a map $\phi : \mathcal{A}^* \rightarrow [0, 1]$ which is given by
 $\phi(X) = \frac{argZ(X)}{\pi}$ for any nonzero object $X$ in $\mathcal{A}$.

   It is obvious that $0 < argZ(X) < \pi$. Note that \[\mathrm{cot}argZ(X) = \frac{\langle\alpha,X\rangle}{\langle\beta,X\rangle}.\]
 For simplicity, we will write $\mathrm{cot}X$ for $\mathrm{cot} argZ(X)$. It is easy to see that
 for any two nonzero objects $X$ and $Y$, $\phi(X)\leq \phi(Y)$ ( $\phi(X)< \phi(Y)$ ) if and
 only if $\mathrm{cot}X \geq \mathrm{cot}Y$ ( $\mathrm{cot}X > \mathrm{cot}Y$, respectively ),
 which is also equivalent to $\langle\alpha,X\rangle \langle\beta,Y\rangle-\langle\alpha,Y\rangle \langle\beta,X\rangle\geq 0$ ( $> 0$, respectively ).
 It is well-known that $\phi$ is a stability function. Indeed, it is obvious that $\phi(X) = \phi(Y)$ if $X \cong Y$.
 On the other hand,  for any exact sequence $0 \rightarrow L \rightarrow M \rightarrow N \rightarrow0$ with $L, M, N \neq 0$,
 we have that
 \[
\left|\begin{array}{cccc}
   \langle\alpha,M\rangle & \langle\alpha,L\rangle \\
   \langle\beta,M\rangle &   \langle\beta,L\rangle\\
    \end{array}\right| =
 \left|\begin{array}{cccc}
   \langle\alpha,L\rangle+\langle\alpha,N\rangle & \langle\alpha,L\rangle \\
    \langle\beta,L\rangle+\langle\beta,N\rangle &  \langle\beta,L\rangle\\
    \end{array}\right|=
 \left|\begin{array}{cccc}
   \langle\alpha,N\rangle & \langle\alpha,L\rangle \\
    \langle\beta,N\rangle &  \langle\beta,L\rangle\\
    \end{array}\right|=
    \left|\begin{array}{cccc}
   \langle\alpha,N\rangle & \langle\alpha,M\rangle \\
    \langle\beta,N\rangle &  \langle\beta,M\rangle\\
    \end{array}\right|,
\]
which implies that $\phi$ satisfies the seesaw property, and then $\phi$ is a stability function on $\mathcal{A}$.
 Since $\frac{\langle\alpha,N_1\rangle}{\langle\beta,N_1\rangle} < \frac{\langle\alpha,N_2\rangle}{\langle\beta,N_2\rangle} <\dots < \frac{\langle\alpha,N_m\rangle}{\langle\beta,N_m\rangle}$
 and $\mathrm{cot} N_i = \frac{\langle\alpha,N_i\rangle}{\langle\beta,N_i\rangle}$ for each $i$, we have that $$\phi(N_1) > \phi(N_2) > \dots > \phi(N_m).$$

 We claim that any nonzero object $X$ in $Filt(N_i)$ is $\phi$-semistable with phase $\phi(X) = \phi(N_i)$.
 Since $\phi$ satisfies the seesaw property, then $\phi(X) = \phi(N_i)$ and thus
 $\mathrm{cot}X = \mathrm{cot}N_i =\frac{\langle\alpha,N_i\rangle}{\langle\beta,N_i\rangle}$ for any nonzero object $X\in Filt(N_i)$.
 Suppose that $L$ is a nontrivial subobject of $X$. By Theorem \ref{m1} (v), there is
 a unique filtration  $L$:
  \[0=L_0\subset L_1\subset L_2 \subset \dots \subset L_m = L\]
 such that $L_j/L_{j-1}\in Filt(N_j)$ for each $1 \leq j \leq m$. Then we may assume that $[L_j/L_{j-1}] = l_j[N_j]$.
  Thus, $[L]=\sum_{j=1}^ml_j[N_j]$, $\langle\alpha,L\rangle = \sum_{j=1}^ml_j\cdot \langle\alpha,N_j\rangle$ and  $\langle\beta,L\rangle = \sum_{j=1}^ml_j\cdot \langle\beta,N_j\rangle$.
 Since $X\in Filt(N_i)= \mathcal{T}_i \cap \mathcal{F}_{i-1}$,
 we get $$L\in \mathcal{F}_{i-1}=(N_0\oplus N_1\oplus \dots \oplus N_{i-1})^{\bot}.$$
 Therefore $L_0 = L_1 = \dots = L_{i-1} = 0$ and thus $l_1 = l_2 = \dots = l_{i-1}= 0$.
  Notice that $\frac{\langle\alpha,N_j\rangle}{\langle\beta,N_j\rangle} \geq \frac{\langle\alpha,N_i\rangle}{\langle\beta,N_i\rangle}$ for all $j \geq  i$, we have that
 \begin{equation}\label{ine}
 \mathrm{cot} L = \frac{\langle\alpha,L\rangle}{\langle\beta,L\rangle}
                = \frac{ \sum\limits_{j=i}^{m}l_j\langle\alpha,N_j\rangle}{\sum\limits_{j=i}^{m}l_j\langle\beta,N_j\rangle}
                \geq \frac{ \sum\limits_{j=i}^{m}l_j\langle\beta,N_j\rangle\langle\alpha,N_i\rangle/\langle\beta,N_i\rangle}{\sum\limits_{j=i}^{m}l_j\langle\beta,N_j\rangle}
                = \frac{\langle\alpha,N_i\rangle}{\langle\beta,N_i\rangle} = \mathrm{cot} X.
 \end{equation}
 Then $\phi(L)\leq \phi(X)$ which implies $X$ is $\phi$-semistable. In particular, if $X = N_i$,
 then we claim that $Hom(N_i, L) = 0$. Otherwise, the composition $N_i \rightarrow L \hookrightarrow N_i$ is
 nonzero and is an isomorphism, which is a contradiction since $L$ is a nontrivial subobject of $N_i$. Then $L\in \mathcal{F}_i =
 (N_0\oplus \dots \oplus N_i)^{\bot}$, and thus $l_i = 0$ and the inequality (\ref{ine}) is strict.
 Then $N_i$ is $\phi$-stable.

By Theorem \ref{m1}, for any nonzero object $X\in \mathcal{A}$, there is a unique filtration of $X$:
 \begin{equation}\label{fil}
 0=X_{i_0}\subsetneq X_{i_1}\subsetneq X_{i_2} \subsetneq \dots \subsetneq X_{i_l} =X
 \end{equation}
 such that $X_{i_j}/X_{i_{j-1}} \in Filt(N_{i_j})$ for each $1 \leq j \leq l$ and $1 \leq i_1 < i_2 < \dots < i_l \leq m$.
 Since $X_{i_j}/X_{i_{j-1}}$ are $\phi$-semistable with phase $\phi(X_{i_j}/X_{i_{j-1}}) = \phi(N_{i_j})$ and
 $\phi(N_{i_1}) > \phi(N_{i_2}) > \dots > \phi(N_{i_l})$,  the filtration (\ref{fil}) of $X$ induced by the maximal
 green sequence is the same as the unique one induced by the stability function $\phi$ as in Theorem \ref{ruda}. Then
 $X_{i_l}/X_{i_{l-1}}$ is the maximally destabilizing quotient of $X$. Hence the phase of the maximally destabilizing
 quotient of each nonzero object in $\mathcal{A}$ is in $\{\phi(N_1), \phi(N_2), \dots , \phi(N_m)\}$. For simplicity,
 we write $\phi(N_i) = r_i$ for each $1 \leq i \leq m$.

For each $r\in [0,1]$, recall that the torsion class $\mathcal{T}_{\geq r}$ induced by $\phi$
 is given by
  \[ \mathcal{T}_{\geq r}\, =\, \{X\,\in\,Obj(\mathcal{A}): \phi(X')\,\geq\, r\,\, \text{for the maximally destabilizing
                          quotient $X'$ of $X$}\} \cup \{0\}.\]
 In the following proof, we prove that $\mathcal{T}_{\geq r_i} = \mathcal{T}_i$ for each $1 \leq i \leq m$.
 Note that $\phi(X') \geq r_i$ if and only if $\phi(X') \in \{r_1, r_2, \dots, r_i\}$.

Suppose that $Y$ is an arbitrary nonzero object in $\mathcal{A}$, by considering the unique filtration of $Y$,
 it is easy to see that $Y \in \mathcal{T}_k\backslash \mathcal{T}_{k-1}$  if and only if $\phi(Y') = r_k$,
 where $Y'$ is the maximally destabilizing quotient of $Y$.

 Let us prove that $\mathcal{T}_{\geq r_i} = \mathcal{T}_i$ for each $1 \leq i \leq m$.  Indeed, if $X\in \mathcal{T}_i\backslash \{0\}$, then
 there exits $j\leq i$ such that $X \in \mathcal{T}_j\backslash \mathcal{T}_{j-1}$. Thus $\phi(X') = r_j \geq r_i$
 where $X'$ is the maximally destabilizing quotient of $X$, and hence
 $X \in \mathcal{T}_{\geq r_i}$. Conversely, if $W\in \mathcal{T}_{\geq r_i}$, then $\phi(W') = r_j \geq r_i$ for some $j\leq i$ where $W'$ is the maximally destabilizing quotient of $W$, and
 hence $W\in \mathcal{T}_j\backslash \mathcal{T}_{j-1}$. Then $W\in \mathcal{T}_i$.
 Thus $\mathcal{T}_{\geq r_i} = \mathcal{T}_i$ for each $1 \leq i \leq m$.

  In particular, we have that
  $\mathcal{T}_{\geq r_m} = \mathcal{T}_m = \mathcal{A}$. On the other hand, take $r_0\in [0,1]$ such that $r_1<r_0<1$,
  it is obvious that $\mathcal{T}_{ \geq r_0} =\mathcal T_0= 0$.

  Now it follows that the maximal green sequence
  $\mathcal{T}_0\, \lessdot\, \mathcal{T}_1\, \lessdot \,\dots\, \lessdot\, \mathcal{T}_m $
  is induced by the stability function $\phi$.

Converesely, suppose that  the maximal green sequence $\mathcal{T}$ is induced by a central charge
  $$Z : K_0(\mathcal{A}) \rightarrow \mathbb{C}$$
  which is given by $Z(X) = \langle\alpha,X\rangle + \mathrm{i}\cdot \langle\beta,X\rangle$, where $\alpha \in \mathbb{R}^n$ and $\beta\in \mathbb{R}_{>0}^n$.
  Then by definition, $\mathcal{T}$ is induced by the stability function $\phi_Z$. Let $N_1, N_2, \dots, N_m$
  be the corresponding complete forward hom-orthogonal sequence. Then we have that $\phi_Z(N_1) > \phi_Z(N_1) > \dots >\phi_Z(N_m)$
  by Theorem \ref{cor}, and thus $\mathrm{cot}argZ(N_1) < \mathrm{cot}argZ(N_2) < \dots < \mathrm{cot}argZ(N_m)$. This implies that
   $$\langle\alpha,N_i\rangle\cdot \langle\beta,N_{i+1}\rangle <  \langle\alpha,N_{i+1}\rangle\cdot \langle\beta,N_i\rangle$$ for each $1 \leq i \leq m-1$. Then $\mathcal{T}$ satisfies crossing inequalities.
\end{proof}

For a given maximal green sequence, it is hard to determine if it is induced by a central charge. Theorem \ref{main} supports a
possible way (but not complete) to construct a central charge which induce the given maximal green sequence.
 In the following, we give an example to show it is operable.
 We refer to \cite{BDP} for basic concepts on $c$-matrices and maximal green sequences of a quiver.

\begin{Example}

Consider the following quiver $Q$ of type $A_4$ and let $A=KQ$ be the path algebra where $K$ is an algebraically closed field.

\[\begin{tikzpicture}[scale=1.3]
\node at (0,0) (11) {$1$};
\node at (1,0) (21) {$2$};
\node at (2,0) (31) {$3$};
\node at (3,0) (41) {$4$};
\path[-angle 90]
	(11) edge (21)
	(21) edge (31)
	(31) edge (41);
\end{tikzpicture}\]

To give a maximal green sequence for $mod(KQ)$, let us consider the maximal green sequence $(2, 1, 4, 1, 2, 3)$ of $Q$ first, and the corresponding mutations of $c$-matrices are given as follows.

\[\begin{split} &\begin{bmatrix} 1&0&0&0\\0&1&0&0\\0&0&1&0\\0&0&0&1 \end{bmatrix} \xrightarrow{\mu_2}\begin{bmatrix} 1&0&0&0\\0&-1&1&0\\0&0&1&0\\0&0&0&1 \end{bmatrix} \xrightarrow{\mu_1}
\begin{bmatrix} -1&0&1&0\\0&-1&1&0\\0&0&1&0\\0&0&0&1 \end{bmatrix} \xrightarrow{\mu_4}\begin{bmatrix} -1&0&1&0\\0&-1&1&0\\0&0&1&0\\0&0&0&-1 \end{bmatrix}\\& \xrightarrow{\mu_3}
\begin{bmatrix} 0&0&-1&0\\1&-1&-1&0\\1&0&-1&0\\0&0&0&-1 \end{bmatrix} \xrightarrow{\mu_1} \begin{bmatrix} 0&0&-1&0\\-1&0&0&0\\-1&1&0&0\\0&0&0&-1 \end{bmatrix} \xrightarrow{\mu_2}
\begin{bmatrix} 0&0&-1&0\\-1&0&0&0\\0&-1&0&0\\0&0&0&-1 \end{bmatrix}
\end{split}\]

Recall that Igusa showed in \cite{Ig1} that for an acyclic quiver, there is a bijection between
maximal green sequences of the quiver and CFHO sequences of its path algebra  over
an algebraically closed field, and precisely the correspondence claims mutated $c$-vectors of a maximal green sequence of the quiver correspond to dimension vectors
of bricks in a  CFHO sequence of the path algebra.

Then the sequence of mutated $c$-vectors above gives a complete forward hom-orthogonal sequence $N_1, N_2, N_3, N_4, N_5, N_6$ in $mod(KQ)$ satisfying that
\[\underline{dim}N_1 = \begin{bmatrix} 0\\1\\0\\0 \end{bmatrix},\,\,\underline{dim}N_2 = \begin{bmatrix} 1\\0\\0\\0 \end{bmatrix},\,\,\underline{dim}N_3 = \begin{bmatrix} 0\\0\\0\\1 \end{bmatrix},\,\, \underline{dim}N_4 = \begin{bmatrix} 1\\1\\1\\0 \end{bmatrix},\,\,\underline{dim}N_5 = \begin{bmatrix} 0\\1\\1\\0 \end{bmatrix},\,\,\underline{dim}N_6 = \begin{bmatrix} 0\\0\\1\\0 \end{bmatrix}.
\]

By Theorem \ref{m2}, the CFHO sequence $N_1, N_2, N_3, N_4, N_5, N_6$ gives a maximal green sequence $0 = \mathcal{T}_0 \lessdot \mathcal{T}_1 \lessdot \mathcal{T}_2 \lessdot \mathcal{T}_3 \lessdot \mathcal{T}_4 \lessdot \mathcal{T}_5 \lessdot \mathcal{T}_6$ of $mod(KQ)$, which is given by $\mathcal{T}_i = Filt(\{N_1, N_2,\dots, N_i\})$.

For the maximal green sequence $\mathcal{T} : \mathcal{T}_0 \lessdot \mathcal{T}_1 \lessdot \mathcal{T}_2 \lessdot \mathcal{T}_3 \lessdot \mathcal{T}_4 \lessdot \mathcal{T}_5 \lessdot \mathcal{T}_6$, we try to find $\alpha \in \mathbb{R}^4$ and $\beta \in \mathbb{R}^4_{>0}$ such that the maximal green sequence $\mathcal{T}$ satisfies the crossing inequalities.

Let us fix a positive integer vector $\beta=(1,1,1,1)^T$. Assume that $\alpha  = (x_1,x_2,x_3,x_4)^T \in \mathbb{R}^4$. Then $\langle\alpha,N_i\rangle\cdot \langle\beta,N_{i+1}\rangle\,\, <\,\,  \langle\alpha,N_{i+1}\rangle \cdot \langle\beta, N_i\rangle $ for $1\leq i \leq 5$, i.e., \[\frac{\langle\alpha,N_1\rangle}{\langle\beta,N_1\rangle} < \frac{\langle\alpha,N_2\rangle}{\langle\beta,N_2\rangle}
< \frac{\langle\alpha,N_3\rangle}{\langle\beta,N_3\rangle} < \frac{\langle\alpha,N_4\rangle}{\langle\beta,N_4\rangle} < \frac{\langle\alpha,N_5\rangle}{\langle\beta,N_5\rangle} < \frac{\langle\alpha,N_6\rangle}{\langle\beta,N_6 \rangle}\] are given by the following inequalities:

 \[ x_2 < x_1 < x_4 < \frac{x_1+x_2+x_3}{3} < \frac{x_2+x_3}{2} <x_3.\]
 There are infinite many solutions of $\alpha$. By Theorem \ref{main}, each of these solutions for $\alpha$ together with $\beta=(1,1,1,1)^T$ can determine a central charge
which can induce the maximal green sequence $\mathcal{T} : \mathcal{T}_0 \lessdot \mathcal{T}_1 \lessdot \mathcal{T}_2 \lessdot \mathcal{T}_3 \lessdot \mathcal{T}_4 \lessdot \mathcal{T}_5 \lessdot \mathcal{T}_6$. For example, we may take
$\alpha = (2,1,20,3)^T$.
\end{Example}

\section{Applications: $c$-vectors and Rotation lemma  }
In this section, we consider module categories of finite dimensional algebras. Let $A$ be a finite dimensional algebra over a field $K$. Let $\tau$ be the AR-translation of $A$ and $n = rank(K_0(A))$. In this section, we only consider finitely generated right $A$-modules and all modules are assumed to be basic.
\subsection{Bricks and $c$-vectors}\label{4.1}
For a module $X$, we denote the number of nonisomorphic indecomposable summands of $M$ by $|M|$. A module  $M$ is said to be {\bf $\tau$-rigid} if
$Hom(M, \tau M) = 0$. A pair of modules $(M, P)$ is an  {\bf $\tau$-rigid pair} if $M$ is $\tau$-rigid, $P$ is projective and $Hom(P,M)=0$. A $\tau$-rigid pair $(M, P)$ is {\bf (almost) $\tau$-tilting} if  $|M|+|P|=n$ (respectively, $|M|+|P|=n-1$).
For every almost $\tau$-tilting pair $(M, P)$, there exist exactly two $\tau$-tilting pair $(M_1,P_1)$ and $(M_2,P_2)$
such that $M$ is direct summand of both $M_1$ and $M_2$, and $P$ is direct summand of both $P_1$ and $P_2$. In this case, we say $(M_1,P_1)$ and $(M_2,P_2)$ are a mutation of each other. Note that if $(M_1,P_1)$ and $(M_2,P_2)$ are a mutation of each other, there is an almost $\tau$-tilting pair $(M, P)$, and exactly two non-isomorphic indecomposbale $\tau$-rigid modules $X$ and $Y$ such that  $(M_1,P_1)$ is obtained from $(M, P)$ and $X$, and $(M_2,P_2)$ is obtained from $(M, P)$ and $Y$. We also say $(M_2,P_2)$ is obtained from $(M_1, P_1)$ by mutating at $X$, and $(M_1,P_1)$ is obtained from $(M_2, P_2)$ by mutating at $Y$, or say that $(M_1,P_1)$ and $(M_2,P_2)$ are a mutation of each other with the exchange pair $(X,Y)$. Given a $\tau$-rigid pair $(M,P)$, we say a module $X$ is a direct summand of $(M,P)$ if $X$ is a direct summand of $M$
or $P$.

Let $K_0(projA)$ be the Grothendieck group of the additive category $projA$, where $projA$ consists of projective $A$-modules. A basis of $K_0(projA)$ is given by the isomorphism classes $P(1), P(2), \dots, P(n)$ of indecomposable projective modules. For a given $\tau$-rigid module $X$ with the minimal projective presentation $\oplus_{i=1}^nP(i)^{b_i(X)} \rightarrow \oplus_{i=1}^nP(i)^{a_i(X)} \rightarrow X \rightarrow 0$, the {\bf $g$-vector} of $X$ is  defined to be $g(X) = (a_1(X)-b_1(X), a_2(X)-b_2(X), \dots, a_n(X)-b_n(X))^T$. Let $(M = \oplus_{i=1}^tM_i, P = \oplus_{j=t+1}^nP_j)$ be an  $\tau$-tilting pair, then the {\bf $g$-matrix} of $(M, P)$ is defined to be the matrix \[G_{(M,P)}=(g(M_1),\dots, g(M_t), -g(P_{t+1}),\dots, -g(P_n))\in M_{n\times n}(\mathbb{Z}).\]
Notice that $G_{(M,P)}$ is invertible over $\mathbb{Z}$ for any $\tau$-tilting pair $(M = \oplus_{i=1}^tM_i, P = \oplus_{j=t+1}^nP_j)$ \cite{AIR}, Fu \cite{Fu} defined the {\bf $c$-matrix} of $(M, P)$ to be the matrix
\[C_{(M,P)} = (G_{(M,P)}^T)^{-1} = (c_1,\dots,c_t, c_{t+1},\dots,c_n).\]

For $1\leq i \leq t$, the $i$-th column vector $c_i$ of the $c$-matrix $C_{(M,P)}$ of the $\tau$-tilting pair $(M,P)$ is called the {\bf $c$-vector} of $M_i$ with respect to $(M,P)$ and denoted it by $c(M_i)_{(M,P)}$, and for $t+1\leq j \leq n$, the $j$-th column vector $c_j$ of the $c$-matrix $C_{(M,P)}$  is called the {\bf $c$-vector} of $P_j$ with respect to $(M,P)$ and denoted it by $c(P_j)_{(M,P)}$.
Given a $\tau$-tilting pair $(M,P)$,  let $X$ be an indecomposbale direct summand of either $M$ or $P$, then either $X=M_i$ for $i\in \{1,\cdots,t\}$ or $X=P_j$ for $j\in\{t+1,\cdots,n\}$. In this case,  we usually denote briefly by $c(X)$  the $c$-vector $c(X)_{(M,P)}$ when there is no confusion in the context.

Fu \cite{Fu} proved that $c$-matrices have the sign-coherent property, i.e., each column vector of a $c$-matrix is either non-negative or non-positive. Let $(M_1,P_1)$ and $(M_2,P_2)$ be a mutation of each other with the exchange pair $(X,Y)$, we say it is a {\bf green mutation} from  $(M_1,P_1)$ to $(M_2,P_2)$ if  $c(X)=c(X)_{(M_1,P_1)}$ is non-positive, otherwise we say it is a {\bf red mutation} from  $(M_1,P_1)$ to $(M_2,P_2)$.

\begin{Definition}
A  {\bf maximal green sequence of $\tau$-tilting pairs} is a finite sequence $(M_1, P_1)$, $(M_2, P_2)$, $\dots$, $(M_m, P_m)$ of $\tau$-tilting pairs such that  $(M_1, P_1)=(0,A)$,  $(M_m, P_m) = ( A, 0)$, $(M_i, P_i)$  and $(M_{i+1}, P_{i+1})$ are a mutation and the mutation from $(M_i, P_i)$  to $(M_{i+1}, P_{i+1})$ is green for each $1\leq i \leq m-1$.
\end{Definition}

Note that by [Theorem 3.1, \cite{DIJ}], for every mutation  $(M_1,P_1)$ and $(M_2,P_2)$, we have either $FacM_1 \lessdot FacM_2$ or $FacM_2 \lessdot FacM_1$.

\begin{Lemma}\label{x}
 Let $(M_1,P_1)$ and $(M_2,P_2)$ be a mutation of each other  with the exchange pair $(X,Y)$.
Suppose that $FacM_1 \subset FacM_2$ and $N$ is the corresponding minimal extending module. Then $c(X) = -\underline{dim}N$ and  $c(Y) = \underline{dim}N$.
\end{Lemma}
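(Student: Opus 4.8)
The plan is to exploit the duality between $g$-vectors and $c$-vectors built into the definition $C_{(M,P)} = (G_{(M,P)}^T)^{-1}$. Writing $\tilde g(W)$ for the column of $G_{(M,P)}$ attached to an indecomposable summand $W$ (so $\tilde g(W)=g(W)$ if $W$ is a module summand and $\tilde g(P(j))=-g(P(j))=-e_j$ if $W=P(j)$ is projective), the identity $G_{(M,P)}^T C_{(M,P)}=I$ says precisely that $\langle \tilde g(W'),\,c(W)\rangle=\delta_{W,W'}$ for the standard dot product, as $W,W'$ range over summands of $(M,P)$. Since $\langle e_j,\underline{\dim}V\rangle=\dim_K\mathrm{Hom}(P(j),V)$ is the $j$-th coordinate of $\underline{\dim}V$, this dot product is the natural pairing $K_0(\mathrm{proj}\,A)\times K_0(\mathrm{mod}\,A)\to\mathbb Z$. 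Hence to identify $c(X)$ and $c(Y)$ it is enough to compute the pairings of $\underline{\dim}N$ against the signed $g$-vectors of all summands of $(M_1,P_1)$ and $(M_2,P_2)$.

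Let $(M,P)$ be the common almost $\tau$-tilting pair, so that $(M_1,P_1)$ and $(M_2,P_2)$ are its completions by $X$ and $Y$. Because $\mathrm{Fac}M_1\subsetneq\mathrm{Fac}M_2$, a short bookkeeping argument shows that $Y$ is always a module with $M_2=M\oplus Y$, while $X$ is either a module with $M_1=M\oplus X$ or an indecomposable projective with $P_1=P\oplus X$. For a module summand $W$ I will read off $\langle g(W),\underline{\dim}N\rangle$ by applying $\mathrm{Hom}(-,N)$ to a minimal projective presentation of $W$, which yields the classical identity $\langle g(W),\underline{\dim}N\rangle=\dim_K\mathrm{Hom}(W,N)-\dim_K\mathrm{Hom}(N,\tau W)$, and similarly $\langle \tilde g(P(j)),\underline{\dim}N\rangle=-\dim_K\mathrm{Hom}(P(j),N)$ for a projective summand. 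The two structural inputs are $N\in\mathrm{Fac}M_2$ (so $\mathrm{Hom}(N,\tau M_2)=0$ and $\mathrm{Hom}(P_2,N)=0$ by the $\tau$-tilting description of the torsion class) and the defining properties of the minimal extending module, namely $\mathrm{Hom}(\mathrm{Fac}M_1,N)=0$ together with "every proper factor of $N$ lies in $\mathrm{Fac}M_1$'' (Definition \ref{MEO}, Theorem \ref{mini}).

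With these inputs the common summands are handled at once. If $Z$ is a module summand of $(M,P)$ then $\mathrm{Hom}(Z,N)=0$ because $Z\in\mathrm{Fac}M_1$, and $\mathrm{Hom}(N,\tau Z)=0$ because $Z\mid M_2$ and $N\in\mathrm{Fac}M_2$; if $Z=P(j)$ then $\mathrm{Hom}(P(j),N)=0$ because $P(j)\mid P_2$. Thus $\underline{\dim}N$ is orthogonal to each of the $n-1$ linearly independent vectors $\tilde g(Z)$, whose orthogonal complement is therefore one-dimensional and contains $\underline{\dim}N$, $c(X)$ and $c(Y)$ all at once. Consequently $c(X)=\underline{\dim}N/\langle\tilde g(X),\underline{\dim}N\rangle$ and $c(Y)=\underline{\dim}N/\langle\tilde g(Y),\underline{\dim}N\rangle$, and the whole statement reduces to the two numerical assertions $\langle\tilde g(X),\underline{\dim}N\rangle=-1$ and $\langle\tilde g(Y),\underline{\dim}N\rangle=1$.

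This last reduction is where the genuine work lies, and I expect it to be the main obstacle. For $Y$ I would take the canonical sequence $0\to tY\to Y\to Y/tY\to 0$ of $Y$ for the torsion pair $(\mathrm{Fac}M_1,\mathcal F_1)$; the quotient lies in $\mathrm{Fac}M_2\cap\mathcal F_1=\mathrm{Filt}(N)$ (Theorem \ref{m1}(iv)), and the point is to identify it with $N$ itself, using that $Y$ is the indecomposable Ext-projective completing $M$ and the minimality of $N$. Granting $Y/tY\cong N$, applying $\mathrm{Hom}(-,N)$ and using $\mathrm{Hom}(\mathrm{Fac}M_1,N)=0$ gives $\mathrm{Hom}(Y,N)\cong\mathrm{End}(N)$, which is one-dimensional since $N$ is a brick, so $\langle\tilde g(Y),\underline{\dim}N\rangle=1$. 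For $X$ I would use the dual exchange data: when $X$ is a module the surviving term is $\dim_K\mathrm{Hom}(N,\tau X)=\dim_K\mathrm{Ext}^1(X,N)$ by the Auslander--Reiten formula, and the exchange extension supplies the unique (up to the $\mathrm{End}(N)$-action) non-split extension $0\to N\to E\to X\to 0$, forcing this dimension to be $1$; when $X=P(j)$ is projective the term is $\dim_K\mathrm{Hom}(P(j),N)=(\underline{\dim}N)_j$, which the exchange again pins to $1$. Pinning these two integers to exactly $\mp 1$ rather than merely nonzero—via the precise shape of the $\tau$-tilting exchange sequences and the brick property $\mathrm{End}(N)=K$—is the delicate part of the argument.
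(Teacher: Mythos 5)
Your reduction is sound as far as it goes: the bookkeeping showing $Y$ must be a module summand of $M_2$ is correct, the vanishing $\langle\tilde g(Z),\underline{\dim}N\rangle=0$ for every common summand $Z$ follows exactly as you say from $\mathrm{Hom}(\mathrm{Fac}M_1,N)=0$, $N\in\mathrm{Fac}M_2={}^{\bot}(\tau M_2)\cap P_2^{\bot}$ and the identity $\langle g(W),\underline{\dim}N\rangle=\dim_K\mathrm{Hom}(W,N)-\dim_K\mathrm{Hom}(N,\tau W)$, and invertibility of the $g$-matrices makes the orthogonal complement one-dimensional, so $c(X)$, $c(Y)$ and $\underline{\dim}N$ are indeed proportional. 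But the proof stops precisely where the content of the lemma lies: the two normalizations $\langle\tilde g(X),\underline{\dim}N\rangle=-1$ and $\langle\tilde g(Y),\underline{\dim}N\rangle=1$ are asserted, not proved. The identification $Y/tY\cong N$ is deferred (``the point is to identify it''), and $\dim_K\mathrm{Ext}^1(X,N)=1$, respectively $(\underline{\dim}N)_j=1$ in the projective case, is attributed to ``the exchange'' with no argument; these are nontrivial brick-labelling statements, essentially equivalent to known theorems of Asai/Treffinger, and proving them is the whole difficulty. Worse, the normalization step is not even true in the generality of the lemma: the paper works over an arbitrary field $K$, where a brick only has $\mathrm{End}(N)$ a division ring. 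Even granting $Y/tY\cong N$, your computation gives $\dim_K\mathrm{Hom}(Y,N)=\dim_K\mathrm{End}(N)$, which can exceed $1$ (already for $K=\mathbb{R}$), and over non-split ground fields the dot-product form of the Auslander--Reiten identity you rely on acquires weights $\dim_K\mathrm{End}(S_i)$, so exact integer values of these pairings --- as opposed to their signs --- are simply not available by this route without assuming $K$ algebraically closed.

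The paper's proof is engineered to avoid any exact normalization. It first shows $N$ is the unique $\theta_{(M,P)}$-stable module for $\theta_{(M,P)}=\sum_i g(M_i)-\sum_j g(P_j)$: semistability because $N\in M^{\bot}\cap{}^{\bot}(\tau M)\cap P^{\bot}$, and stability because otherwise the unique stable module $B$ would occur as a proper factor of $N$, hence lie in $\mathrm{Fac}M\cap M^{\bot}=0$. It then invokes Treffinger [T0, Theorem 3.5] to conclude a priori that $c(X)=\pm\underline{\dim}N$, so that only a \emph{sign} remains to be determined; this is done with the robust inequalities $\langle g(X),\underline{\dim}N\rangle\leq 0$ (module case, from $\mathrm{Hom}(\mathrm{Fac}M_1,N)=0$) or $\geq 0$ (projective case) played against $\langle\pm g(X),c(X)\rangle=1$, and $c(Y)=\underline{\dim}N$ comes from [T0, Lemma 3.4]. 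Sign information survives the division-ring weights, which is why the paper's argument works over any field. To repair your proposal you would either need to prove the two deferred normalization facts (restricting to $\mathrm{End}(N)=K$), or, as the paper does, import a result pinning $c$-vectors to $\pm\underline{\dim}$ of the stable brick so that your orthogonality argument only has to fix signs.
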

\begin{proof}
Assume that $(M = \oplus_{i=1}^t M_i, P=\oplus_{j=t+1}^{n-1}P_j)$ is the almost $\tau$-tilting pair which can be completed to  $\tau$-tilting pairs  $(M_1,P_1)$ and $(M_2,P_2)$.
Since $FacM_1 \subset FacM_2$, we have that $FacM_1=FacM$ and $FacM_2 = {^{\bot}(\tau M)} \cap P^{\bot}$.
Let $\theta_{(M,P)}= \sum_{i=1}^tg(M_i) - \sum_{j=t+1}^{n-1}g(P_j)\in \mathbb{Z}^m$.
Since a nonzero module $X$ is in $M^{\bot}\cap {^{\bot}(\tau M)} \cap P^{\bot}$ if and only if $X$ is  $\theta_{(M,P)}$-semistable,
then $N$ is $\theta_{(M,P)}$-semistable.
Note that there is a unique $\theta_{(M,P)}$-stable module $B$ up to isomorphhism.
If $N$ is not isomorphic to $B$, then $B$ is a proper factor of $N$ since the set of $\theta_{(M,P)}$-semistable modules form an abelian category with only one simple object $B$.  Hence $B\in Fac M_1= FacM$, which contradicts to $B\in M^{\bot}$.
Thus $N$ is $\theta_{(M,P)}$-stable.
By [Theorem 3.5, \cite{T0}], we have that either $c(X) = \underline{dim}N$ or  $c(X)=-\underline{dim}N$. Note that $\langle g(X), c(X)\rangle=1$ if $X$ is a direct summand of $M_1$, and
  $\langle -g(X), c(X)\rangle=1$ if $X$ is a direct summand of $P_1$.
By [Theorem 1.4(a), \cite{AR}], we have that
\[\langle g(X), \underline{dim}N\rangle = dim_KHom(X,N)-dim_KHom(N,\tau X).\]
Therefore if $X$ is a direct summand of $M_1$, then $Hom(X, N) = 0$ since $Hom(FacM_1,N)=0$. In this case, $\langle g(X), \underline{dim}N\rangle \leq 0$ which implies that
$c(X)= -\underline{dim}N$. If $X$ is a direct summand of $P_1$, it is obvious that $\langle g(X), \underline{dim}N\rangle \geq 0$ which also implies that $c(X)= -\underline{dim}N$.

Then  $c(Y) = \underline{dim}N$ follows from [Lemma 3.4, \cite{T0}].
\end{proof}

\begin{Corollary}\label{c} Let $(M_1,P_1)$ and $(M_2,P_2)$ be a mutation of each other  with the exchange pair $(X,Y)$.
Then the following conditions are equivalent:

(1) $FacM_1 \subset FacM_2$.

(2) $c(X) \leq 0$, i.e., the mutation from $(M_1,P_1)$ to $(M_2,P_2)$ is green.

(3) $c(Y) \geq 0$.

\end{Corollary}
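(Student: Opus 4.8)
The plan is to deduce everything from Lemma \ref{x} together with the dichotomy [Theorem 3.1, \cite{DIJ}], which guarantees that for the mutation $(M_1,P_1),(M_2,P_2)$ exactly one of $FacM_1 \lessdot FacM_2$ or $FacM_2 \lessdot FacM_1$ holds. First I would record the two easy implications $(1)\Rightarrow(2)$ and $(1)\Rightarrow(3)$. Assuming $FacM_1 \subset FacM_2$, let $N$ be the corresponding minimal extending module; Lemma \ref{x} gives $c(X) = -\underline{dim}N$ and $c(Y) = \underline{dim}N$. Since $N$ is a nonzero module, $\underline{dim}N$ is non-negative and nonzero, whence $c(X)\leq 0$ and $c(Y)\geq 0$, which are precisely (2) and (3).

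For the converses I would argue by contradiction using the dichotomy. Suppose (2) holds, that is $c(X)\leq 0$, but $FacM_1 \not\subset FacM_2$. Then [Theorem 3.1, \cite{DIJ}] forces $FacM_2 \lessdot FacM_1$. Viewing the \emph{same} mutation in the reverse direction, from $(M_2,P_2)$ to $(M_1,P_1)$, the exchange pair becomes $(Y,X)$, with $Y$ now the summand attached to the smaller pair $(M_2,P_2)$. Applying Lemma \ref{x} to this reversed mutation, with minimal extending module $N'$, yields $c(Y) = -\underline{dim}N'$ and, crucially, $c(X) = \underline{dim}N'$, which is non-negative and nonzero. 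This contradicts $c(X)\leq 0$, so we must have $FacM_1 \subset FacM_2$, giving $(2)\Rightarrow(1)$. The implication $(3)\Rightarrow(1)$ is entirely symmetric: if $c(Y)\geq 0$ while $FacM_2 \lessdot FacM_1$, the reversed mutation gives $c(Y) = -\underline{dim}N'\leq 0$ and nonzero, again a contradiction. Chaining $(1)\Rightarrow(2)$, $(1)\Rightarrow(3)$, $(2)\Rightarrow(1)$ and $(3)\Rightarrow(1)$ then yields the full equivalence.

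The one point requiring care — more a bookkeeping issue than a genuine obstacle — is keeping track of which $\tau$-tilting pair each $c$-vector is computed with respect to, together with the role-reversal of $X$ and $Y$ under the reverse mutation. Lemma \ref{x} is stated with the pair whose torsion class is \emph{smaller} playing the distinguished role, so in the contradiction step I must consistently insert $(M_2,P_2)$ in the slot of ``$(M_1,P_1)$''; one then checks that the symbols $c(X)$ and $c(Y)$ appearing there are literally the same vectors $c(X)_{(M_1,P_1)}$ and $c(Y)_{(M_2,P_2)}$ used in the statement of the corollary, so no relabelling of the conclusion is needed. The sign-coherence and nonvanishing of $c$-vectors (Fu's result in \cite{Fu}) are what make the contradictions sharp: a vector equal to $\pm\underline{dim}N'$ is non-negative and nonzero, hence cannot be $\leq 0$, and dually.
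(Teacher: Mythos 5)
Your proof is correct and takes essentially the same approach as the paper: both deduce everything from Lemma \ref{x} together with the dichotomy of [Theorem 3.1, \cite{DIJ}], proving the converse directions by contradiction via applying Lemma \ref{x} to the reversed mutation with exchange pair $(Y,X)$. The only differences are organizational (the paper first notes $(2)\Leftrightarrow(3)$ and then proves $(1)\Leftrightarrow(2)$, while you prove the implications to and from (1) directly) and that your careful bookkeeping of which $\tau$-tilting pair each $c$-vector is computed with respect to simply makes explicit what the paper's terse proof leaves implicit.
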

\begin{proof}
The equivalence of (2) and (3) follows from Lemma \ref{x} easily.

(1) $\Rightarrow$ (2): If  $FacM_1 \subset FacM_2$, by Lemma \ref{x}, we have $c(X) = -\underline{dim}N \leq 0$.

(2) $\Rightarrow$ (1): If $c(X) \leq 0$. Suppose that $FacM_2 \subset FacM_1$, then  $c(X) \geq 0$ by Lemma \ref{x} which is a contradiction.
\end{proof}

\begin{Theorem}\label{y}
Let $A$ finite dimensional algebra over a field $K$. There is a bijection between the following sets.

(1) The set of maximal green sequences of torsion classes in $modA$;

(2) The set of maximal green sequences of $\tau$-tilting pairs in $modA$.
\end{Theorem}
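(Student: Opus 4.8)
The plan is to realize the bijection through the classical correspondence of Adachi--Iyama--Reiten: the assignment $(M,P)\mapsto \mathrm{Fac}\,M$ is a bijection between (basic) support $\tau$-tilting pairs and functorially finite torsion classes in $\mathrm{mod}A$, sending $(0,A)$ to $0$ and $(A,0)$ to $\mathrm{mod}A$ \cite{AIR}. Define a map $\Phi$ from maximal green sequences of $\tau$-tilting pairs to sequences of torsion classes by applying $\mathrm{Fac}$ term by term, $(M_i,P_i)\mapsto \mathrm{Fac}\,M_i$. First I would check that $\Phi$ is well defined: for a green mutation from $(M_i,P_i)$ to $(M_{i+1},P_{i+1})$, Corollary \ref{c} gives $\mathrm{Fac}\,M_i\subsetneq \mathrm{Fac}\,M_{i+1}$, and [Theorem 3.1, \cite{DIJ}] upgrades this strict inclusion to a covering relation $\mathrm{Fac}\,M_i\lessdot \mathrm{Fac}\,M_{i+1}$; since the endpoints map to $0$ and $\mathrm{mod}A$, the image $0=\mathrm{Fac}\,M_1\lessdot\dots\lessdot \mathrm{Fac}\,M_m=\mathrm{mod}A$ is a maximal green sequence of torsion classes. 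Injectivity of $\Phi$ is immediate from the injectivity of $(M,P)\mapsto \mathrm{Fac}\,M$.

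For surjectivity I would construct the inverse by lifting a given maximal green sequence of torsion classes $0=\mathcal{T}_0\lessdot\dots\lessdot\mathcal{T}_m=\mathrm{mod}A$ inductively. Starting from the functorially finite torsion class $\mathcal{T}_0=0=\mathrm{Fac}\,0$ with pair $(0,A)$, suppose $\mathcal{T}_{i-1}=\mathrm{Fac}\,M_{i-1}$ is functorially finite. By Theorem \ref{m1} the cover $\mathcal{T}_{i-1}\lessdot\mathcal{T}_i$ is witnessed by a brick $N_i$, the minimal extending object of $\mathcal{T}_{i-1}$, with $\mathcal{T}_i=\mathrm{Filt}(\mathcal{T}_{i-1}\cup\{N_i\})$. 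I would then invoke [Theorem 3.1, \cite{DIJ}] together with the mutation theory of support $\tau$-tilting pairs to produce the green mutation $(M_{i-1},P_{i-1})\to(M_i,P_i)$ whose associated minimal extending module is exactly $N_i$, matching exchange data through Lemma \ref{x}, so that $\mathrm{Fac}\,M_i=\mathrm{Filt}(\mathrm{Fac}\,M_{i-1}\cup\{N_i\})=\mathcal{T}_i$ and in particular $\mathcal{T}_i$ is again functorially finite. Iterating until $\mathcal{T}_m=\mathrm{mod}A=\mathrm{Fac}\,A$ forces $(M_m,P_m)=(A,0)$, and since every mutation was green the resulting sequence $(0,A),\dots,(A,0)$ is a maximal green sequence of $\tau$-tilting pairs with $\Phi$-image the prescribed torsion-class sequence. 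This provides the two-sided inverse of $\Phi$, hence the bijection.

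The hard part will be the inductive step of surjectivity, namely guaranteeing that each intermediate torsion class $\mathcal{T}_i$ of the given maximal green sequence is functorially finite and that the upward cover $\mathcal{T}_{i-1}\lessdot\mathcal{T}_i$ is realized by a green mutation rather than escaping the image of $\mathrm{Fac}$. The abstract covering structure is already under control by Theorem \ref{mini}, which identifies covers of any torsion class with minimal extending objects without any finiteness hypothesis; the genuine work is therefore to match the brick $N_i$ with the exchange pair $(X,Y)$ of a mutation of support $\tau$-tilting pairs, for which Lemma \ref{x} and Corollary \ref{c} supply the precise dictionary $c(X)=-\underline{\dim}N_i$ and $c(Y)=\underline{\dim}N_i$. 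I would close the argument by confirming, via [Theorem 3.1, \cite{DIJ}] and the Adachi--Iyama--Reiten bijection, that a functorially finite torsion class has all of its upward covers functorially finite and realized by green mutations, which propagates functorial finiteness along the whole finite chain and makes the two inductive constructions mutually inverse.
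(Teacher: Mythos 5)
Your direction from maximal green sequences of $\tau$-tilting pairs to maximal green sequences of torsion classes is exactly the paper's argument: Corollary \ref{c} gives the strict inclusion $\mathrm{Fac}\,M_i \subsetneq \mathrm{Fac}\,M_{i+1}$ for a green mutation, and [Theorem 3.1, \cite{DIJ}] (as invoked just before Lemma \ref{x}) upgrades it to a cover; injectivity via the Adachi--Iyama--Reiten bijection is also implicit in the paper. The difference lies entirely in the lifting direction, and there your proposal has a soft spot. The paper disposes of this direction in one line by citing [Proposition 4.9, \cite{BST0}], which asserts precisely that a maximal chain $0=\mathcal{T}_0\lessdot\dots\lessdot\mathcal{T}_m=\mathrm{mod}A$ is realized by a sequence of mutations of $\tau$-tilting pairs with $\mathcal{T}_i=\mathrm{Fac}\,M_i$; greenness then follows again from Corollary \ref{c}. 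You instead sketch an inductive re-proof of that proposition, and the pivotal step of your induction --- that a functorially finite torsion class has \emph{all} of its upward covers in the full lattice $\mathrm{tors}\,A$ functorially finite and realized by green mutations --- is asserted rather than proved, and it does not follow from the tools you name as they are used in this paper. The paper uses [Theorem 3.1, \cite{DIJ}] only for the implication ``mutation $\Rightarrow$ cover in $\mathrm{tors}\,A$''; what you need is the converse, ``cover of a functorially finite torsion class $\Rightarrow$ functorially finite and obtained by mutation.''

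Neither Theorem \ref{mini} nor Lemma \ref{x} closes this hole. Theorem \ref{mini} parametrizes the covers of $\mathcal{T}_{i-1}$ by minimal extending bricks $N_i$ with $\mathcal{T}_i=\mathrm{Filt}(\mathcal{T}_{i-1}\cup\{N_i\})$, but it gives no control on functorial finiteness of $\mathcal{T}_i$ and no passage from an arbitrary minimal extending brick to an exchange pair $(X,Y)$; Lemma \ref{x} runs only in the opposite direction, producing the brick (and its dimension vector as a $c$-vector) \emph{from} a given mutation. A priori, the $n$ mutations of $(M_{i-1},P_{i-1})$ account for only some of the minimal extending bricks of $\mathrm{Fac}\,M_{i-1}$, and ruling out extra covers is genuinely the content of [Proposition 4.9, \cite{BST0}] (equivalently, the full strength of the exchange theorem of \cite{DIJ}, which does contain such a statement, though not in the form quoted in this paper). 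So your argument is complete only once you replace the gesture toward ``[Theorem 3.1, \cite{DIJ}] together with the mutation theory'' by a precise citation of one of these statements --- at which point your proof collapses into the paper's, with your induction buying only an explicit matching of $N_i$ to the exchange pair via $c(X)=-\underline{dim}N_i$, $c(Y)=\underline{dim}N_i$, which the paper obtains from the same sources.
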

\begin{proof}
Let $0=\mathcal{T}_0\,\lessdot\, \mathcal{T}_1\, \lessdot\, \mathcal{T}_2\, \lessdot \,\dots\,
                             \lessdot\, \mathcal{T}_m=modA$
be a maximal green sequences of torsion classes in $modA$. By [Proposition 4.9, \cite{BST0}], there exists a sequence
 $(M_0, P_0)$, $(M_1, P_1)$, $\dots$, $(M_m, P_m)$ of $\tau$-tilting pairs such that  $(M_0, P_0)=(0,A)$,  $(M_m, P_m) = ( A, 0)$, $(M_i, P_i)$  and $(M_{i+1}, P_{i+1})$ are a mutation with the exchange pair $(X_i, Y_i)$ such that $\mathcal{T}_i =FacM_i$. It is enough to show that the mutation from $(M_i, P_i)$  to $(M_{i+1}, P_{i+1})$ is green for each $1\leq i \leq m-1$, which follows from Corollary \ref{c}.

 On the other hand, let the sequence
 $(M_0, P_0)$, $(M_1, P_1)$, $\dots$, $(M_m, P_m)$ be  a maximal green sequences of $\tau$-tilting pairs in $modA$. By Corollary \ref{c}, we have that $FacM_i \lessdot FacM_{i+1}$ and it is clear that $0=FacM_0 \lessdot FacM_1 \lessdot \dots \lessdot FacM_m = modA$ is a maximal green sequences of torsion classes in $modA$.
\end{proof}

Since dimension vectors of bricks appeared in a CFHO can be given via $c$-vectors by Theorem \ref{y} and Lemma \ref{x}, the crossing inequalities can be established by $c$-vectors.   Following these results, we obtain the diagram of the relationship among {\em $c$-vectors} and the concepts concerned in this paper as below, where an arrow $ A \rightarrow B$ means that $B$ can be constructed from $A$.
\[\mathord{\begin{tikzpicture}[scale=1.3,baseline=0]
\node at (0,0) (0) {MGS  of torsion classes};
\node at (0,1) (1) {MGS  of $\tau$-tilting pairs};
\node at (5,0) (2) {Corresonding sequences of $c$-vectors};
\node at (5,1) (3) {Crossing inequalities};
\node at (5,2.7) (4) {Stability functions};
\path[-angle 90]
    (0) edge   (2)
	(2) edge   (0)
    (0) edge   (1)
	(1) edge   (0)
         (2) edge   (3)
         (3) edge  node [right] {if solutions exist} (4)
         (4) edge   (1);

\end{tikzpicture}}\]

\subsection{Rotation lemma for Jacobian algebras}\label{4.2}
In this section, all quivers are assumed to be a directed graph with finite arrows and vertices. We refer to the article \cite{DWZ}  for basic concepts and properties of quiver with potentials and Jacobian algebras.
Let $(Q,w)$ be a non-degenerated quiver with potential, and $J(Q, w)$ be the corresponding Jacobian algebra. In this section, we always assume that
Jacobian algebras are finite dimensional. For each vertex $k$, let $(Q',w') = \mu_k(Q,w)$ be the mutation of $(Q,w)$ at $k$. Let $\mathcal{A}= modJ(Q,w)$
and $\mathcal{A'}= modJ(Q',w')$ be the module categories. Assume that the simple modules of $J(Q,w)$ and  $J(Q',w')$ are given by $S_1, S_2, \dots, S_n$ and
$S'_1, S'_2, \dots, S'_n$. We define the following full subcategories of $\mathcal{A}$ (and of $\mathcal{A'}$ accordingly)
\[^{\bot}S_k := \{X\in \mathcal{A}| Hom(X, S_k) =0\};\]
\[S_k ^{\bot}:= \{Y\in \mathcal{A}| Hom( S_k, Y) =0\};\]
and denote $\langle S_k\rangle$ the full subcategories of $\mathcal{A}$ consisting of direct sums of $S_k$.

For every quiver with potential $(Q,w)$ and take any vertex  $k$, we define a matrix $B_k=(b_{ij})$ which is given by $b_{ii} = 1$ for $i\neq k$, $b_{kk}=0$, $b_{kj}$ equals  to the number of arrows from $k$ to $j$ in $Q$ for $j\neq k$, and otherwise $b_{vu}=0$.
 \begin{Theorem}[\cite{BIRS,Mou}]\label{eq}
There are two additive functors (also called the generalized reflection functors) $F_k^+: \mathcal{A} \rightarrow \mathcal{A}'$ and
$F_k^-: \mathcal{A'} \rightarrow \mathcal{A}$ satisfy the following properties.

(1) $F_k^+$ is right exact and  $F_k^-$ is left exact. They are adjoint to each other.

(2) $F_k^+(\langle S_k\rangle)=0$ and $F_k^-(\langle S'_k\rangle )=0$.

(3) $F_k^+(S_k ^{\bot})\subset {^{\bot}S'_k}$ and $F_k^-(^{\bot}S'_k)\subset S_k ^{\bot}$.

(4) The restrictions $F_k^+:S_k ^{\bot}\rightarrow {^{\bot}S'_k}$ and $F_k^-: {^{\bot}S'_k}\rightarrow  S_k ^{\bot}$ are inverse equivalence.
Moreover these equivalences preserve short exact sequences.

(5) If $V\in S_k ^{\bot}$, then $\underline{dim}F_k^+V = B_k\underline{dim}V$, where $\underline{dim}V$
is the dimension vector of $V$ in $\mathcal{A}$ with respect to the basis $[S_1],\dots,[S_n]$ and  $\underline{dim}F_k^+V$
is the dimension vector of $F_k^+V$ in $\mathcal{A}'$ with respect to the basis $[S'_1],\dots,[S'_n]$.
\end{Theorem}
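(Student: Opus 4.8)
The plan is to build $F_k^+$ and $F_k^-$ from the mutation of a cluster-tilting object and then to verify (1)--(5) by pushing everything into the perpendicular subcategories $S_k^\perp$ and ${}^\perp S'_k$, on which the two functors become mutually inverse exact equivalences. Concretely, I would realize $\Gamma = J(Q,w)$ as $End_{\mathcal{C}}(T)$ for a cluster-tilting object $T = \bigoplus_{i=1}^n T_i$ in the generalized cluster category $\mathcal{C}$ attached to $(Q,w)$, so that $Hom_{\mathcal{C}}(T,-)$ induces an equivalence $\mathcal{C}/[\Sigma T] \simeq mod\,\Gamma = \mathcal{A}$. Mutation at $k$ replaces $T_k$ by the object $T_k^{\ast}$ occurring in the two exchange triangles and yields a cluster-tilting object $T'$ with $End_{\mathcal{C}}(T') = \Gamma' = J(Q',w')$ and $\mathcal{C}/[\Sigma T'] \simeq \mathcal{A}'$. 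I would then \emph{define} $F_k^+$ and $F_k^-$ as the two composites transporting a module across these identifications; equivalently $F_k^+ \cong B\otimes_{\Gamma}-$ for a suitable $\Gamma'$-$\Gamma$-bimodule $B$, with $F_k^-$ its right adjoint $Hom_{\Gamma'}(B,-)$.

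With this description (1) is formal: a tensor functor is right exact, its $Hom$-adjoint is left exact, and the adjunction is the usual tensor--hom adjunction. For (2), the simple $S_k$ is the top of the indecomposable projective $P_k$ and corresponds to the summand $T_k$ being mutated, so the exchange triangles force $F_k^+(S_k)=0$; by additivity $F_k^+(\langle S_k\rangle)=0$, and $F_k^-(\langle S'_k\rangle)=0$ is the mirror computation for $T'$.

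The heart of the matter is (3)--(4). For (3) I would use the adjunction together with (2): for $Y\in S_k^\perp$ one has $Hom_{\Gamma'}(F_k^+Y,\,S'_k)\cong Hom_{\Gamma}(Y,\,F_k^-S'_k)=0$, so $F_k^+Y\in {}^\perp S'_k$, and dually $F_k^-({}^\perp S'_k)\subseteq S_k^\perp$. The equivalence in (4) then reduces to showing that the unit $Y\to F_k^-F_k^+Y$ and the counit $F_k^+F_k^-X\to X$ are isomorphisms on $S_k^\perp$ and ${}^\perp S'_k$ respectively; here I would exploit the involutivity of quiver-with-potential mutation, $\mu_k^2\cong \mathrm{id}$ up to right equivalence, transported through the two $Hom_{\mathcal{C}}$-identifications. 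Finally, to see that these equivalences preserve short exact sequences I combine the exact halves of (1): $F_k^+$ preserves cokernels and $F_k^-$ preserves kernels, and since they are mutually quasi-inverse on the perpendicular categories each then preserves both kernels and cokernels there, hence is exact.

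For (5) I would use that $F_k^+|_{S_k^\perp}$ is exact, so it induces an additive map on Grothendieck groups and hence a linear map $\mathbb{Z}^n\to\mathbb{Z}^n$ on dimension vectors; it then suffices to evaluate it on a spanning set. On the classes $[S_i]$ with $i\neq k$ (which lie in $S_k^\perp$) together with one further module carrying $S_k$ as a composition factor but not in its socle, one reads off precisely the entries of $B_k$: the diagonal $1$'s from the unchanged components and the outgoing-arrow counts $\#\{k\to j\}$ through the $k$-th row, with the consistency check $B_k[S_k]=0$ matching $F_k^+(S_k)=0$ from (2). The step I expect to be the real obstacle is the equivalence in (4): proving that the unit and counit are isomorphisms on the perpendicular categories, and that the globally only right-exact $F_k^+$ genuinely restricts to an exact functor there, requires controlling the exchange triangles directly rather than by formal nonsense, and both (3) and (5) ultimately rest on it.
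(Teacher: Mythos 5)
A preliminary remark: the paper does not prove Theorem \ref{eq} at all --- it is imported verbatim from \cite{BIRS,Mou}, where $F_k^{\pm}$ are constructed explicitly (in \cite{Mou} by concrete linear algebra at the vertex $k$, following the Derksen--Weyman--Zelevinsky premutation of representations; in \cite{BIRS} the equivalence of part (4) is obtained inside the cluster category, where $S_k^{\bot}$ and ${}^{\bot}S'_k$ are identified as images of one and the same subcategory of $\mathcal{C}$ under $Hom_{\mathcal{C}}(T,-)$ and $Hom_{\mathcal{C}}(T',-)$). Judged against that, your proposal has a genuine gap at its foundation: the ``transport'' definition of $F_k^+$ is not well defined on all of $\mathcal{A}$. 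The functor $Hom_{\mathcal{C}}(T',-)$ kills morphisms factoring through $\Sigma T'$, not through $\Sigma T$; since $Hom_{\mathcal{C}}(T_k^{\ast},\Sigma T_k)\neq 0$ (this is exactly the exchange pairing in the exchange triangles), maps factoring through $\Sigma T_k$ survive, so composing the inverse of $Hom_{\mathcal{C}}(T,-)\colon \mathcal{C}/[\Sigma T]\simeq \mathcal{A}$ with $Hom_{\mathcal{C}}(T',-)$ does not yield a functor on $\mathcal{A}$; it only makes sense on a subcategory, essentially $S_k^{\bot}$, which is the conclusion of the theorem rather than a construction of the functors. The fallback ``equivalently $F_k^+\cong B\otimes_{\Gamma}-$ for a suitable bimodule $B$'' is circular: Eilenberg--Watts produces $B$ from an already existing right exact functor, and you never exhibit $B$ or verify any of its properties. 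Since (1) is the input to your derivations of (2) and (3), the whole chain hangs from an undefined object. (Your step (3) itself is sound once (1)--(2) are granted --- indeed the adjunction plus $F_k^-(S'_k)=0$ gives $F_k^+(\mathcal{A})\subset {}^{\bot}S'_k$ with no hypothesis on $Y$ at all.)

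Second, the two places where you lean on formal arguments do not close, and you yourself defer the crux (4). For preservation of short exact sequences: given $0\to A\to B\to C\to 0$ with all terms in $S_k^{\bot}$, right exactness leaves a kernel $K$ of $F_k^+A\to F_k^+B$ to dispose of; ${}^{\bot}S'_k$ is a torsion class, hence not closed under subobjects, so $K$ need not lie in the subcategory where your quasi-inverse operates, and left exactness of $F_k^-$ together with the unit isomorphism only yields $F_k^-K=0$. To conclude $K=0$ one must identify the kernel of the functor $F_k^-$ (it is $\langle S'_k\rangle$, using that $Q'$ has no loop at $k$) and then exclude an embedding $(S'_k)^{\oplus m}\hookrightarrow F_k^+A$ --- precisely the explicit control of the exchange triangles that you postpone. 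Similarly, your argument for (5) presupposes that $V\mapsto \underline{dim}\,F_k^+V$ is induced by a linear map on $K_0(\mathcal{A})$, but exactness is only available for sequences whose three terms lie in $S_k^{\bot}$: a module $V\in S_k^{\bot}$ may have $S_k$ as a composition factor, and its Jordan--H\"{o}lder subquotients leave $S_k^{\bot}$, so the spanning-set evaluation has no additivity to rest on. Indeed $\underline{dim}\,F_k^+$ is not linear on all of $\mathcal{A}$ --- the mutation formula differs off $S_k^{\bot}$, which is why (5) is restricted to $S_k^{\bot}$; note also $S_k\notin S_k^{\bot}$, so your ``consistency check'' $B_k[S_k]=0$ versus $F_k^+(S_k)=0$ lies outside the statement. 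In the cited sources, (1)--(5) are read off from the explicit vertexwise definition of $F_k^{\pm}$; a proof avoiding that definition would have to replace it by an actual construction of $B$, which your sketch does not supply.
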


 Now we give the Rotation Lemma for finite dimensional Jacobian algebras, which is a generalization of Igusa's result in \cite{Ig2}, which is the Rotation Lemma for a quiver with potential $(Q,w)$ where $Q$ is mutation equivalent to some quiver of finite type in the sense of \cite{FZ2}.

\begin{Theorem}
With the assumptions and notations above. If $N_1, N_2, \dots, N_m$ is a complete forward Hom-orthogonal sequence in $\mathcal{A} = mod J(Q,w)$ with
$N_1 = S_k$, then $F_k^+(N_2), \dots, F_k^+(N_m), S'_k$ is a complete forward Hom-orthogonal sequence in $\mathcal{A'} = mod J(Q',w')$.
\end{Theorem}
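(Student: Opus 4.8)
The plan is to verify directly the three defining conditions of Definition \ref{cfho} for the candidate sequence $M_1=F_k^+(N_2),\dots,M_{m-1}=F_k^+(N_m),M_m=S'_k$, translating each condition back to the corresponding property of the original sequence $N_1,\dots,N_m$ via the reflection functors of Theorem \ref{eq}. The key preliminary observation is that $N_1=S_k$ together with Definition \ref{cfho}(i) for the original sequence forces $Hom(S_k,N_j)=0$, i.e. $N_j\in S_k^{\bot}$ for all $j\ge 2$; hence $F_k^+$ applies to $N_2,\dots,N_m$, and by Theorem \ref{eq}(3) the modules $F_k^+(N_2),\dots,F_k^+(N_m)$ all lie in ${}^{\bot}S'_k$. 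Throughout I would use that $F_k^+\colon S_k^{\bot}\to{}^{\bot}S'_k$ is an exact equivalence with quasi-inverse $F_k^-$ (Theorem \ref{eq}(4)), that $F_k^+\dashv F_k^-$ (the adjunction of Theorem \ref{eq}(1)), and that $F_k^+(S_k)=0$ (Theorem \ref{eq}(2)).

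Condition (i) is immediate. Since $N_{a+1},N_{b+1}\in S_k^{\bot}$ and $F_k^+$ is an equivalence onto ${}^{\bot}S'_k$, one gets $Hom(M_a,M_b)\cong Hom(N_{a+1},N_{b+1})=0$ for $1\le a<b\le m-1$, while $Hom(M_a,S'_k)=0$ because $M_a\in{}^{\bot}S'_k$. The same equivalence shows each $M_a$ is a brick, its endomorphism ring being isomorphic to that of the brick $N_{a+1}$, and $M_m=S'_k$ is a brick as it is simple.

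For the maximality condition (ii) I would use its reformulation noted after Definition \ref{cfho}: it suffices to show, for every $\ell$, that $(M_1\oplus\dots\oplus M_\ell)^{\bot}\cap{}^{\bot}(M_{\ell+1}\oplus\dots\oplus M_m)=0$. Fix $\ell\le m-1$ and let $Y$ lie in this intersection. Since $S'_k=M_m$ occurs among $M_{\ell+1},\dots,M_m$, we have $Y\in{}^{\bot}S'_k$, so $Z:=F_k^-(Y)\in S_k^{\bot}$ by Theorem \ref{eq}(3) and $Z=0$ iff $Y=0$. Transporting the two orthogonality conditions on $Y$ through the equivalence $F_k^-$ (using $F_k^-F_k^+\cong\mathrm{id}$ on $S_k^{\bot}$) yields $Hom(N_i,Z)=0$ for $2\le i\le\ell+1$ and $Hom(Z,N_j)=0$ for $\ell+2\le j\le m$; moreover $Z\in S_k^{\bot}$ is exactly $Hom(N_1,Z)=0$. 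Hence $Z\in(N_1\oplus\dots\oplus N_{\ell+1})^{\bot}\cap{}^{\bot}(N_{\ell+2}\oplus\dots\oplus N_m)$, which is zero by the maximality of the original sequence at index $\ell+1$ (the case $\ell+1=m$ being Definition \ref{cfho}(iii) for $N_1,\dots,N_m$). Thus $Z=0$ and $Y=0$.

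The remaining and most delicate point is condition (iii), namely $\mathcal{G}(M)=\mathcal{A}'$, equivalently $M^{\bot}=0$; this is also the $\ell=m$ instance of maximality, where the previous argument breaks down because $Y\in M^{\bot}$ only gives $Hom(S'_k,Y)=0$, i.e. $Y\in(S'_k)^{\bot}$ rather than $Y\in{}^{\bot}S'_k$, so $F_k^-$ need not be an equivalence at $Y$. Here I would argue through the adjunction instead. For $Y\in M^{\bot}$ and $i\ge 2$ one has $Hom(N_i,F_k^-(Y))\cong Hom(F_k^+(N_i),Y)=0$, while $Hom(S_k,F_k^-(Y))\cong Hom(F_k^+(S_k),Y)=Hom(0,Y)=0$ since $F_k^+(S_k)=0$; therefore $F_k^-(Y)\in(N_1\oplus\dots\oplus N_m)^{\bot}=0$ by Definition \ref{cfho}(iii) for the original sequence, so $F_k^-(Y)=0$. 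It then remains to deduce $Y=0$ from $F_k^-(Y)=0$ and $Y\in(S'_k)^{\bot}$. Because $F_k^+\colon S_k^{\bot}\to{}^{\bot}S'_k$ is essentially surjective, ${}^{\bot}S'_k\subseteq\mathrm{Im}\,F_k^+$, so $F_k^-(Y)=0$ yields $Hom(X,Y)=0$ for every $X\in{}^{\bot}S'_k$, i.e. $Y\in({}^{\bot}S'_k)^{\bot}$. Testing against the projective covers $P'_j\in{}^{\bot}S'_k$ for $j\ne k$ and using $\dim Hom(P'_j,Y)=[Y:S'_j]$ shows that every composition factor of $Y$ is isomorphic to $S'_k$; a nonzero such $Y$ would then have $S'_k$ in its socle, contradicting $Y\in(S'_k)^{\bot}$. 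Hence $Y=0$ and $M^{\bot}=0$. The main obstacle is precisely this last step: controlling $F_k^-$ outside the subcategory ${}^{\bot}S'_k$ on which it is an equivalence, which is what forces the separate adjunction-and-composition-factor analysis of the $S'_k$-direction.
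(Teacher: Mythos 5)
Your proposal is correct, but it takes a genuinely different route from the paper's for the completeness and maximality conditions. The paper verifies bricks and forward orthogonality exactly as you do, but then proves $(F_k^+(N_2)\oplus \dots \oplus F_k^+(N_m)\oplus S'_k)^{\bot}=0$ by showing that every simple module of $\mathcal{A}'$ occurs, up to isomorphism, in the new sequence: an internal lemma (proved by taking a minimal index $i$ with $Hom(M_i,S)\neq 0$ and inserting the $F_k^-$-preimage of $S$ into the original sequence to contradict its maximality) shows a simple $S$ outside the list would satisfy $S\in {}^{\bot}S'_k$, whence its preimage lands in $(N_1\oplus\dots\oplus N_m)^{\bot}=0$; maximality is then handled by pulling back any insertable brick $M$ (which lies in ${}^{\bot}S'_k$ since it precedes $S'_k$) through the equivalence to a brick insertable into $N_1,\dots,N_m$. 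You instead verify the quantitative reformulation of condition (ii) quoted after Definition \ref{cfho}: for $\ell\leq m-1$ you transport the intersection $(M_1\oplus\dots\oplus M_\ell)^{\bot}\cap {}^{\bot}(M_{\ell+1}\oplus\dots\oplus M_m)$ through the equivalence of Theorem \ref{eq}(4) (legitimate because $S'_k$ appears on the right, forcing $Y\in{}^{\bot}S'_k$) onto the corresponding intersection for the original sequence, which vanishes as in the computation of Corollary \ref{cgx}; and you correctly isolate the one index where this transport fails, $\ell=m$, i.e.\ condition (iii) $M^{\bot}=0$, disposing of it by the adjunction $F_k^+\dashv F_k^-$ (giving $F_k^-(Y)=0$) together with a composition-factor analysis: testing against the projectives $P'_j\in{}^{\bot}S'_k$, $j\neq k$, shows all composition factors of $Y$ are $S'_k$, so a nonzero $Y$ would have $S'_k$ in its socle, contradicting $Hom(S'_k,Y)=0$. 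Both arguments are sound; the paper's yields as a by-product the analogue of Lemma \ref{sim} (all simples of $\mathcal{A}'$ occur in the image sequence), while yours isolates the genuine difficulty --- that $F_k^-$ is only an equivalence on ${}^{\bot}S'_k$, so its kernel outside that subcategory must be controlled --- and settles it with a short adjunction-plus-projective-cover computation, avoiding the paper's somewhat ad hoc minimal-index lemma. Two pedantic remarks: in general $\dim Hom(P'_j,Y)=[Y:S'_j]\cdot \dim End(S'_j)$, which changes nothing here since simples of a Jacobian algebra have trivial endomorphism rings; and your $\ell\leq m-1$ argument proves the plain intersections vanish, which is stronger than, and hence implies, the cited reformulation in which one additionally intersects with $\mathcal{G}(M)$.
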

\begin{proof}
By Theorem \ref{eq}, it is obvious that $F_k^+(N_2), \dots, F_k^+(N_m), S'_k$ are bricks.

Since for $2\leq i \leq m$, $N_i\in S_k^{\bot}$, then  $F_k^+(N_i)\in ^{\bot}S'_k$, i.e., $Hom(F_k^+(N_i), S'_k) = 0$. We also have $Hom_{\mathcal{A'}}(F_k^+(N_i), F_k^+(N_j))= Hom_{^{\bot}S'_k}(F_k^+(N_i), F_k^+(N_j)) = Hom_{S_k^{\bot}}(N_i, N_j) = 0$ for $2\leq i <j \leq m$.

In the following, we show that $(F_k^+(N_2)\oplus \dots \oplus F_k^+(N_m)\oplus S'_k )^{\bot} = 0$. To prove this, we claim that if $S$ is a simple module in $\mathcal{A'}$, then $S$ is isomorphic one of modules $F_k^+(N_2), \dots, F_k^+(N_m), S'_k$.

Lemma: If $S \notin \{F_k^+(N_2), \dots, F_k^+(N_m), S'_k\}$ (up to isomorphism), then $Hom(F_k^+(N_i), S) = 0$ for $2 \leq i \leq m$ and $Hom(S'_k, S) = 0$.

Proof: To make the proof more clear, we denote $F_k^+(N_2), \dots, F_k^+(N_m), S'_k$ by $M_1, M_2, \dots, M_m$. If it is not true, we may take the most minimal $i$ from $\{1,2,\dots, m-1\}$ such that $Hom(M_i,S) \neq  0$. In this case, we have that $Hom(S,M_i) =  0$. Otherwise we have $S \simeq M_i$. Now we have that $Hom(M_p,S)= 0$ for $p<i$ and
$Hom(S,M_i) =  0$. Take a nonzero morphism $0 \neq f : M_i \rightarrow S$, then for any morphism $g: S\rightarrow M_j$ for $j>i$, we have that $gf=0$ since $Hom(M_i,M_j)=0$ and hence $g=0$ because $f$ is an epimorphism. Therefore $Hom(S, M_j) = 0$ for $j>i$. Note that $S\in {^{\bot}S'_k}$ (since the simple module $S$ is not isomorphic to $S_k$) and $F_k^+:S_k ^{\bot}\rightarrow ^{\bot}S'_k$ is an equivalence, then
there is a module $R\in S_k ^{\bot}$ such that $S=F_k^+(R)$ and it is clear that $R$ is a brick. Consider the sequence $N_1=S_k$, $\dots$, $N_i$,$R$,$N_{i+1}$, $\dots$,$N_m$, we have that \[R\in S_k ^{\bot},\,\, i.e., \,\,Hom(N_1, R) = 0;\]
\[for\,\, 2\leq p \leq i,\,\, Hom_{\mathcal{A}}(N_p, R) = Hom_{S_k ^{\bot}}(N_p, R) = Hom_{^{\bot}S'_k}(F_k^+(N_p), S) = 0;\]
\[for\,\, i+1\leq p \leq m,\,\,Hom_{\mathcal{A}}(R, N_p) = Hom_{S_k ^{\bot}}(R,N_p) = Hom_{^{\bot}S'_k}(S,F_k^+(N_p)) = 0.\]
This contradicts with the fact that $N_1, N_2, \dots, N_m$ is a complete forward Hom-orthogonal sequence in $\mathcal{A}$. Thus the lemma is true.

Now we can prove our claim that if $S$ is a simple module in $\mathcal{A'}$, then $S \in \{F_k^+(N_2), \dots, F_k^+(N_m), S'_k\}$ (up to isomorphism).
Indeed if $S \notin \{F_k^+(N_2), \dots, F_k^+(N_m), S'_k\}$ (up to isomorphism), by the Lemma above, $Hom(F_k^+(N_i), S) = 0$ for $2 \leq i \leq m$ and $Hom(S'_k, S) = 0$.
Note that $S$ is simple and $S$ is not isomorphic to $S'_k$, we also have that $S\in ^{\bot}S'_k$. Thus there is a module $M\in S_k^{\bot}$ such that $S = F_k^+(M)$.
Then for any $i \geq 2$, we have that
 \[0 = Hom_{\mathcal{A'}}(F_k^+(N_i), S) =  Hom_{\mathcal{A'}}(F_k^+(N_i), F_k^+(M)) = Hom_{^{\bot}S'_k}(F_k^+(N_i), F_k^+(M))=  Hom_{S_k^{\bot}}(N_i,M).\]
Thus $M\in (N_1\oplus N_2\oplus \dots \oplus N_m)^{\bot} = 0$, which is a contradiction. The claim is true. By the claim, it is easy to see that $(F_k^+(N_2)\oplus \dots \oplus F_k^+(N_m)\oplus S'_k )^{\bot} = 0$.

Now we can show that no other bricks can be inserted into the sequence  $F_k^+(N_2), \dots, F_k^+(N_m), S'_k$ preserving the forward Hom-orthogonal
property. Indeed, suppose that there is a brick $M$ in $\mathcal{A}'$ satisfying the condition. Note that $M$ can not be the last one because we have proved that $(F_k^+(N_2)\oplus \dots \oplus F_k^+(N_m)\oplus S'_k )^{\bot} = 0$. Then we may assume that the new sequence satisfying the forward Hom-orthogonal
property is  $F_k^+(N_2), \dots,F_k^+(N_i),M, F_k^+(N_{i+1}),\dots, F_k^+(N_m), S'_k$ and  $M\in ^{\bot}S'_k$. Note that there exists a module $H \in S_k^{\bot}$ such that $M=F_K^+{H}$ and $H$ is a brick. It is easy to see the sequence $N_1, \dots, N_i, H, N_{i+1}, \dots, N_m$ satisfies the forward Hom-orthogonal
property, which is a contradiction. Thus $F_k^+(N_2), \dots, F_k^+(N_m), S'_k$ is a complete forward Hom-orthogonal sequence in $\mathcal{A'}$.

\end{proof}

\begin{Corollary}
Let  $N_1, N_2, \dots, N_m$ is a complete forward Hom-orthogonal sequence in $\mathcal{A} = mod J(Q,w)$ with
$N_1 = S_k$. If there are two vectors $\alpha, \beta\in \mathbb{R}^n$ such that $B_k\beta \in \mathds{R}_{>0}^n$ and
 \[
 \frac{\langle\alpha,N_2\rangle}{\langle\beta,N_2\rangle} <\dots <\frac{\langle\alpha,N_{m}\rangle} {\langle\beta,N_{m}\rangle}<\frac{\langle\alpha,N_{1}\rangle} {\langle\beta,N_{1}\rangle},
 \]
 Then the maximal green sequence $F_k^+(N_2), \dots, F_k^+(N_m), S'_k$ in $\mathcal{A'} = mod J(Q',w')$ can be induced by the central charge
 $Z: K_0(\mathcal{A}') \rightarrow \mathbb{C}$
which is given by \[Z(X) = \langle B_k^T\alpha, [X]\rangle + \mathrm{i}\langle B_k^T\beta, [X]\rangle.\]
\end{Corollary}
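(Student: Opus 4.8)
The plan is to deduce the statement from Theorem \ref{main} after transporting all the data to $\mathcal{A}'$ through the reflection functor $F_k^+$. By the Rotation Lemma just proved, $F_k^+(N_2),\dots,F_k^+(N_m),S'_k$ is a complete forward hom-orthogonal sequence in $\mathcal{A}'$, so by the bijection established in Section \ref{2} it is the complete forward hom-orthogonal sequence attached to a maximal green sequence $\mathcal{T}'$ of $\mathcal{A}'$. By Theorem \ref{main} it then suffices to check two things: that $Z$ is a genuine central charge on $\mathcal{A}'$, and that $\mathcal{T}'$ satisfies the crossing inequalities with respect to the vectors $B_k^T\alpha$ and $B_k^T\beta$.

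The computational engine is property (5) of Theorem \ref{eq}, namely $\underline{dim}\,F_k^+V = B_k\,\underline{dim}\,V$ for $V\in S_k^{\bot}$, together with the elementary observation that $B_k$ is idempotent: its $k$-th column is zero and it acts as the identity on the remaining coordinates, whence $B_k^2=B_k$. Consequently every class $B_k[V]$ is fixed by $B_k$, and for any $\theta\in\mathbb{R}^n$ one has $\langle B_k^T\theta,[F_k^+V]\rangle=\theta^TB_k^2[V]=\langle\theta,B_k[V]\rangle=\langle\theta,[F_k^+V]\rangle$. Thus the quantity governing the crossing inequality for $F_k^+(N_{i+1})$ is $\langle\alpha,B_k[N_{i+1}]\rangle/\langle\beta,B_k[N_{i+1}]\rangle$, read off directly from the $\mathcal{A}'$-dimension vector, whereas the appended simple $S'_k$, whose class is $e_k=[S_k]$, carries the ratio $\alpha_k/\beta_k=\langle\alpha,N_1\rangle/\langle\beta,N_1\rangle$.

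Next I would write $B_k[N_{i+1}]=[N_{i+1}]+c_{i+1}e_k$ with $c_{i+1}=\sum_{j\neq k}b_{kj}(N_{i+1})_j-(N_{i+1})_k$, the number of copies of $S'_k$ produced by $F_k^+$, so that the ratio attached to $F_k^+(N_{i+1})$ equals $(\langle\alpha,N_{i+1}\rangle+c_{i+1}\alpha_k)/(\langle\beta,N_{i+1}\rangle+c_{i+1}\beta_k)$. Using $B_k\beta\in\mathbb{R}^n_{>0}$ to keep all these denominators positive, this expression is a weighted mediant of $\langle\alpha,N_{i+1}\rangle/\langle\beta,N_{i+1}\rangle$ and $\alpha_k/\beta_k$, and therefore lies strictly between them when $c_{i+1}>0$. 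Since the hypothesis places every $\langle\alpha,N_{i+1}\rangle/\langle\beta,N_{i+1}\rangle$ $(2\le i+1\le m)$ strictly below $\alpha_k/\beta_k=\langle\alpha,N_1\rangle/\langle\beta,N_1\rangle$, this already delivers the comparison of each $F_k^+(N_{i+1})$ with the final brick $S'_k$, that is, the last of the crossing inequalities.

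The main obstacle, which I would isolate as a separate lemma, is the strict monotonicity $\langle\alpha,B_k[N_i]\rangle/\langle\beta,B_k[N_i]\rangle<\langle\alpha,B_k[N_{i+1}]\rangle/\langle\beta,B_k[N_{i+1}]\rangle$ among consecutive images for $2\le i\le m-1$: the mediant estimate only pins each term down relative to $\alpha_k/\beta_k$ and does not, by itself, preserve the order inherited from the hypothesis. To control it I would exploit that $F_k^+\colon S_k^{\bot}\to{}^{\bot}S'_k$ is an exact equivalence (property (4) of Theorem \ref{eq}), so that the ordering of the bricks $F_k^+(N_2),\dots,F_k^+(N_m)$ is governed by the stability datum pulled back along $F_k^+$, comparing the shifts $c_i$ along the filtrations of Theorem \ref{m1}(v) via the see-saw property and using that $N_2,\dots,N_m$ lie in $S_k^{\bot}$ (with vanishing $S_k$-multiplicity once $N_1=S_k$ heads the sequence). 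The remaining, and most delicate, point is the admissibility of $Z$: positivity of $\mathrm{Im}\,Z$ should be extracted from $B_k\beta\in\mathbb{R}^n_{>0}$ through $\langle B_k^T\beta,[X]\rangle=\langle\beta,B_k[X]\rangle$, and the one value requiring separate attention is that on the rotated simple $S'_k$, where the interplay between $B_k$ and $B_k^T$ must be handled with care. Once both the crossing inequalities and admissibility are secured, Theorem \ref{main} yields that $Z$ induces $\mathcal{T}'$.
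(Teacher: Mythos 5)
Your reduction---transport everything through $F_k^+$, invoke the Rotation Lemma to get a CFHO sequence in $\mathcal{A}'$, and verify the crossing inequalities for the vectors $B_k^T\alpha$, $B_k^T\beta$ via Theorem \ref{eq}(5) so that Theorem \ref{main} applies---is exactly the skeleton of the paper's proof. But your computation diverges at the one point where the paper does actual work, and the gap you yourself flag is not closable along your route. You take the printed definition $b_{kk}=0$ at face value and deduce $B_k^2=B_k$; the paper instead computes with $B_k^2=I$, which holds precisely when $b_{kk}=-1$ (the printed $0$ is evidently a typo: with $b_{kk}=0$ the $k$-th column of $B_k$ is zero, so $Z(S'_k)=\langle B_k^T\alpha,e_k\rangle+\mathrm{i}\langle B_k^T\beta,e_k\rangle=0$, and $Z$ cannot be a central charge at all---the statement would be unprovable under that reading). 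Your write-up is in fact internally inconsistent here: you correctly observe that the $k$-th column of $B_k$ vanishes, yet assert that $S'_k$ ``carries the ratio $\alpha_k/\beta_k$''; under your idempotent reading that ratio is $0/0$. The ratio $\alpha_k/\beta_k$ is what comes out of $b_{kk}=-1$, where $\langle B_k^T\theta,[S'_k]\rangle=-\theta_k$.

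Once $b_{kk}=-1$ is used, both issues you leave open evaporate and the proof collapses to the paper's two-line computation: for $2\leq i\leq m$ one has $N_i\in S_k^{\bot}$ (since $Hom(N_1,N_i)=0$), so by Theorem \ref{eq}(5), $\langle B_k^T\theta,[F_k^+N_i]\rangle=\theta^TB_k^2[N_i]=\langle\theta,N_i\rangle$ for any $\theta$---the middle ratios are \emph{literally unchanged}, no mediants appear, and the consecutive crossing inequalities are inherited verbatim from the hypothesis; the last ratio is $(-\alpha_k)/(-\beta_k)=\alpha_k/\beta_k=\langle\alpha,N_1\rangle/\langle\beta,N_1\rangle$, which the hypothesis places strictly above all the others. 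By contrast, the ``separate lemma'' you isolate is genuinely missing from your argument and your fallback cannot supply it: the weighted-mediant estimate only locates each $\langle\alpha,B_k[N_i]\rangle/\langle\beta,B_k[N_i]\rangle$ between $\langle\alpha,N_i\rangle/\langle\beta,N_i\rangle$ and the anchor $\alpha_k/\beta_k$, and mediants of an increasing sequence toward a common anchor need not remain increasing (different weights $c_i$ pull different terms arbitrarily close to the anchor), while exactness of the equivalence $F_k^+$ carries no metric information about these ratios. A final note: the hypothesis $B_k\beta\in\mathbb{R}^n_{>0}$ should be read as positivity of $B_k^T\beta$, which is what makes $\mathrm{Im}\,Z>0$ on all nonzero objects of $\mathcal{A}'$; you were right to sense sloppiness there, but the resolution is the transpose, not a delicate argument at $S'_k$.
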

\begin{proof}
By Theorem \ref{main}, it is enough to prove that
\begin{equation}\label{z}
 \frac{\langle B_k^T\alpha, F_k^+N_2\rangle}{\langle  B_k^T\beta, F_k^+N_2\rangle} <\dots <\frac{\langle  B_k^T\alpha, F_k^+N_{m}\rangle} {\langle B_k^T\beta,F_k^+N_{m}\rangle}<\frac{\langle  B_k^T\alpha,S'_k\rangle} {\langle  B_k^T\beta, S'_k\rangle}.
\end{equation}

By Theorem \ref{eq}(5), for $2 \leq i \leq m$, since $B_k^2=I$, we have that
\[
 \frac{\langle B_k^T\alpha, F_k^+N_i\rangle}{\langle  B_k^T\beta, F_k^+N_i\rangle} = \frac{\langle B_k^T\alpha, B_kN_i\rangle}{\langle  B_k^T\beta, b_kN_i\rangle}
 =\frac{(\underline{dim}N_i)^T (B_k^T)^2\alpha}{(\underline{dim}N_i)^T (B_k^T)^2\beta} = \frac{(\underline{dim}N_i)^T \alpha}{(\underline{dim}N_i)^T \beta} = \frac{\langle\alpha,N_i\rangle}{\langle\beta,N_i\rangle}.
 \]

On the other hand, by the definition of $B_k$, we have that
\[
\frac{\langle  B_k^T\alpha,S'_k\rangle} {\langle  B_k^T\beta, S'_k\rangle} = \frac{a_k}{b_k} = \frac{\langle\alpha,N_{1}\rangle} {\langle\beta,N_{1}\rangle}.
\]
It is clear that the inequalities (\ref{z}) hold.
\end{proof}

\vspace{5mm}
{\bf Acknowledgements:}\; This project is supported by the National Natural Science Foundation of China (No.11671350) and the Zhejiang Provincial Natural Science Foundation of China (No.LY19A010023).

\vspace{10mm}

\end{document}